\documentclass[12pt]{amsart}
\usepackage{amssymb}
\usepackage{amscd}
\usepackage{amsmath}
\usepackage{amsfonts}
\usepackage{latexsym}
\usepackage{hyperref}
\usepackage{xcolor}
\usepackage[all]{xy}
\usepackage{verbatim}
\usepackage{tabularx}
\usepackage{array}
\usepackage{booktabs}
\usepackage[utf8]{inputenc} 
\usepackage[margin=1in]{geometry}

\DeclareFontFamily{U}{mathc}{}
\DeclareFontShape{U}{mathc}{m}{it}%
{<->s*[1.03] mathc10}{}
\DeclareMathAlphabet{\mathcal}{U}{mathc}{m}{it}

\newtheorem{theorem}{Theorem}[section]
\newtheorem{lemma}[theorem]{Lemma}
\newtheorem{proposition}[theorem]{Proposition}
\newtheorem{corollary}[theorem]{Corollary} 
\theoremstyle{definition}  
\newtheorem{definition}[theorem]{Definition}
\newtheorem{example}[theorem]{Example}
\newtheorem{conjecture}[theorem]{Conjecture}  

\newtheorem{remark}[theorem]{Remark}

\definecolor{dmitri}{HTML}{AF72B0}
\newcommand{\dmitri}[1]{\textcolor{dmitri}{[[Dmitri: #1]]}}

\newcommand{\id}{\text{id}}

\newcommand{\Fun}{\text{Fun}}
\newcommand{\FPdim}{\text{FPdim}} 

\renewcommand{\Vec}{\operatorname{\operatorname{\mathsf{Vec}}}}

\DeclareMathOperator{\Aut}{\operatorname{\mathsf{Aut}}}

\DeclareMathOperator{\Inv}{\operatorname{\mathsf{Inv}}}

\DeclareMathOperator{\Ind}{\operatorname{\mathsf{Ind}}}
\DeclareMathOperator{\Hom}{\operatorname{\mathsf{Hom}}}
\DeclareMathOperator{\Alt}{\operatorname{\mathsf{Alt}}}

\newcommand{\ad}{\text{ad}}

\newcommand{\B}{\mathcal{B}}
\newcommand{\C}{\mathcal{C}}
\newcommand{\D}{\mathcal{D}}
\newcommand{\E}{\mathcal{E}}
\newcommand{\F}{\mathcal{F}}
\newcommand{\Z}{\mathcal{Z}}

\newcommand{\M}{\mathcal{M}}
\newcommand{\A}{\mathcal{A}}
\newcommand{\T}{\mathcal{T}}

\newcommand{\N}{\mathcal{N}}

\newcommand{\TY}{\mathcal{T}\mathcal{Y}}

\newcommand{\be}{\mathbf{1}}

\renewcommand{\be}{\mathbf{1}}

\newcommand{\bt}{\boxtimes}
\newcommand{\ot}{\otimes}

\newcommand{\MT}{T}

\newcommand{\beq}{\begin{equation}}
\newcommand{\eeq}{\end{equation}}

\newcommand{\bpf}{\begin{proof}}
\newcommand{\epf}{\end{proof}}

\newcommand{\bth}{\begin{theorem}}
\renewcommand{\eth}{\end{theorem}}
\newcommand{\bpr}{\begin{proposition}}
\newcommand{\epr}{\end{proposition}}
\newcommand{\ble}{\begin{lemma}}
\newcommand{\ele}{\end{lemma}}
\newcommand{\bco}{\begin{corollary}}
\newcommand{\eco}{\end{corollary}}
\newcommand{\bde}{\begin{definition}}
\newcommand{\ede}{\end{definition}}
\newcommand{\bex}{\begin{example}}
\newcommand{\eex}{\end{example}}
\newcommand{\bre}{\begin{remark}}
\newcommand{\ere}{\end{remark}}
\newcommand{\bcj}{\begin{conjecture}}
\newcommand{\ecj}{\end{conjecture}}

\newcommand{\Mod}{\bf{Mod}}

\newcommand{\Id}{\mathcal{I}\it{d}}
\newcommand{\sVect}{\mathcal{sV}  \mkern-3mu ect} 
\newcommand{\Rad}{\mathcal{R \mkern-3mu ad}}
\newcommand{\Corad}{\mathcal{C \mkern-3mu orad}}
\newcommand{\Rep}{\mathcal{R \mkern-3mu ep}}
\newcommand{\Core}{\mathcal{C \mkern-3mu ore}}
\newcommand{\Mantle}{\mathcal{M \mkern-3mu antle}}
\newcommand{\Vect}{\mathcal{V \mkern-3mu ect}}

\newcommand{\Triv}{\mathcal{T\mkern-3mu riv}}
\newcommand{\Mor}{\mathcal{M\mkern-3mu or}}
\newcommand{\Gaug}{\mathcal{G}}

\newcommand{\uuPic}{\mathbf{Pic}}
\newcommand{\uAut}{\mathcal{A\mkern-3mu ut}}

\hyphenation{theo-re-ti-cal group-theo-re-ti-cal
semi-sim-ple al-geb-ras di-men-sions sim-ple ob-jects
equi-va-lent pro-per-ties ca-te-go-ries ques-tion mo-dule
e-print auto-equi-valence equi-va-ri-an-ti-za-tion}

\begin{document}
\title[Tannakian Radical and Mantle]{The Tannakian radical and the  mantle of a braided fusion category}
\author{Jason Green}
\address{Department of Mathematics and Statistics,
University of New Hampshire,  Durham, NH 03824, USA}
\email{jason.green@unh.edu}
\author{Dmitri Nikshych}
\address{Department of Mathematics and Statistics,
University of New Hampshire,  Durham, NH 03824, USA}
\email{dmitri.nikshych@unh.edu}

\begin{abstract}
We define the Tannakian radical of a braided fusion category $\C$ as the intersection of its maximal Tannakian subcategories. The localization of $\C$ corresponding to the Tannakian radical, termed the  mantle of $\C$, admits a canonical central extension that serves as a complete invariant of $\C$. The mantle has a trivial Tannakian radical, and we refer to braided fusion categories with this property as reductive. We investigate the properties and structure of reductive categories and prove several classification results.
\end{abstract}

\maketitle
\baselineskip=18pt
\tableofcontents


\section{Introduction}

Let $\B$ be a braided fusion category.
There is a well-known procedure for localizing $\B$ with respect to an 
\'etale algebra
(i.e., a commutative separable algebra) $A$ in $\B$, consisting of taking the category
$\B_A^{loc}$ of local $A$-modules in $\B$, see, e.g., \cite{DMNO, DNO}. This produces
a ``smaller" braided fusion category in the Witt class of $\B$.  In the special case
where $A$ is the regular algebra in a Tannakian subcategory $\E \subset \B$, this procedure
coincides with the de-equivariantization \cite{DGNO2}: $\B_A^{loc} \cong \E' \bt_\E \Vect$,
where $\E'$ is the centralizer of $\E$. When $\E$ is maximal among 
Tannakian subcategories of $\B$, this localization is an invariant of $\B$, 
called the {\em core}. 
It was explained in \cite{DGNO2} how the core can be used to extract the part of $\C$ 
that does not come from finite groups. This invariant  has turned out to be very useful in the classification of fusion categories. For example, non-degenerate braided fusion categories with a trivial core are precisely representation categories of twisted Drinfeld doubles \cite{DGNO2}. In addition, Natale characterized weakly group-theoretical braided fusion categories in terms of their cores \cite{Na}.

The category $\B$ can be recovered from its core by means
of a gauging procedure \cite{CGPW, ENO2} with respect to a group $G$ such that $\E=\Rep(G)\subset \B $ is a maximal Tannakian subcategory. However, unlike the core, the isomorphism class of $G$ is not an invariant of $\B$, see Remark~\ref{not an invariant}. Thus, it is natural to replace $\E$ with an invariant Tannakian subcategory of $\B$ and investigate the corresponding localization. This is what we do in this paper, whose main results and organization are outlined below.

Sections~\ref{Sect prelim} and \ref{Section: gauge} contain the necessary results about gauging and localization of braided fusion categories and their maximal Tannakian subcategories.

We define the {\em Tannakian radical} $\Rad(\B)$ of $\B$ as the intersection of all maximal Tannakian subcategories of $\B$ (Section~\ref{section Rad(B)}). We say that $\B$ is {\em reductive} if $\Rad(\B)$ is trivial, i.e., $\Rad(\B)=\Vect$. This is a new class of braided fusion categories 
with interesting properties. For example, $\Rep(D(G))$, the representation category of the Drinfeld double of a finite group $G$, is reductive if and only if $G$ is generated by its normal Abelian subgroups, see Remark~\ref{rem nilpotency} ($G$ must be nilpotent in this case).

We introduce the {\em mantle} of $\B$ as its  localization with respect to the radical:
\[
\Mantle(\B)= \Rad(\B)'\bt_{\Rad(\B)}\Vect. 
\]
We show that the mantle is reductive (Corollary~\ref{Mantle reductive}).
Clearly, the braided equivalence classes of $\Rad(\B)$ and $\Mantle(\B)$ are invariants of $\B$. 
Furthermore, there is a group $G_\B$, determined up to isomorphism, such that $\Rad(\B)= \Rep(G_\B)$ and a canonical monoidal $2$-functor $F_\B: G_\B \to \uuPic(\Mantle(\B))$ such that $\B$ is
equivalent to the equivariantization of the corresponding central extension of $\Mantle(\B)$.
In other words, $\B$ is a canonical $G_\B$-gauging of its mantle. These data
$(G_\B,\, \Mantle(\B),\, F_\B)$ constitute a complete invariant of $\B$
(Section~\ref{Sect complete invrariant}). Since the center $\Z(\A)$ of a fusion category $\A$ is a complete invariant of its Morita equivalence class, by considering the radical and mantle of $\Z(\A)$
we obtain new Morita invariants of $\A$ (Corollary~\ref{complete Morita invariant}).

We study the structure of reductive categories in detail in Section~\ref{Sect  structure reductive}. 
We define a {\em coradical}
of a braided fusion category as the subcategory generated by all its Tannakian subcategories.
The coradical is a group-theoretical category (Proposition~\ref {saturated WGT}). 
We classify reductive categories into three types based on the non-degeneracy properties of the categories themselves and their coradicals, see Definition~\ref{types of decomposition}.
In Theorem~\ref{canonical decomposition} we provide a decomposition of a reductive braided fusion category of each type in terms of its coradical  and a certain anisotropic subcategory. 

Pointed reductive braided fusion categories (i.e., pre-metric groups) are studied in Section~\ref{Sect reductive pointed}. Their classification is given in Theorems~\ref{pointed saturated} and \ref{reductive metric 2-groups classification}.  

Section~\ref{sect classification results}  is devoted to classification results. We first deal with reductive $p$-categories (i.e., categories whose Frobenius-Perron dimension is a power of a prime integer $p$). The motivation comes from Proposition~\ref{red WGT nil}, which states
that a reductive braided fusion category is weakly group-theoretical if and only if it is nilpotent, and so it is a direct product of $p$-categories by \cite{DGNO1}. We show that all such categories of dimension $\leq p^5$ are pointed and 
we classify the non-pointed reductive 
$p$-categories of dimension $p^6$ in Proposition~\ref{Prop p+1 classes}. Finally, we apply our techniques to the classification of non-degenerate braided fusion categories of small Frobenius-Perron dimension. 
In particular, we provide a complete list of categories of dimension equal to a product of at most four primes in Example~\ref{prop: list}, and we relate our results to those already present in the literature.
 
{\bf Acknowledgments.} 
We are grateful to Alexei Davydov and Pavel Etingof for illuminating discussions and to the anonymous referee whose comments helped us improve 
the exposition. The work of the second author was supported  by  the  National  Science  Foundation  under  
Grant No.\ DMS-2302267. This material is based upon work supported by the National Science Foundation under Grant No.\ DMS-1928930, while the second author was in residence at SLMath (the Mathematical Sciences Research Institute) in Berkeley, California, during the Summer of 2024.


\section{Preliminaries}
\label{Sect prelim}

Throughout this paper we work over an algebraically closed field $k$ of characteristic $0$.
Tensor and module categories are assumed to be finite semisimple and $k$-linear. 

We refer the reader to \cite{EGNO, DGNO1} for the theory of braided fusion categories and to \cite{DN2, ENO1} for the theory of graded extensions.

\subsection{Gauging and localization of braided fusion categories}

Let $\B$ be a braided fusion category. Let $\uuPic(\B)$ denote the 2-categorical Picard group of $\B$, consisting of invertible $\B$-module categories \cite{ENO2}.

Let $G$ be a finite group.
A {\em central $G$-extension} $\C=\bigoplus_{x\in G}\,\B_x$ of $\B_1=\B$ (which is the same thing as a $G$-crossed extension \cite{DN2, JMPP}) is a faithful $G$-graded extension along with an embedding $\B \hookrightarrow \Z(\C)$ whose composition with the forgetful functor $\Z(\C)\to \C$  coincides with the embedding $\B \hookrightarrow \C$. 
It was shown in \cite{ENO2, DN2} that the $2$-groupoid of central extensions of $\B$ is equivalent to the $2$-groupoid of monoidal $2$-functors $F: G \to \uuPic(\B)$.
A central extension carries a canonical action of $G$ such that $g(\B_x)=\B_{gxg^{-1}},\, g,x\in G$. This restricts to an action of $G$ on $\B$
by braided autoequivalences that coincides with the composition of $F$ with the canonical monoidal functor $\uuPic(\B) \to \uAut^{br}(\B)$.

The corresponding equivariantization 
\[
\Gaug(G,\B,F) :=\left( \bigoplus_{x\in G}\,\B_x \right)^G
\]
is called a $G$-{\em gauging} (or simply {\em gauging}) of $\B$. It is a  braided fusion category containing a Tannakian subcategory $\Rep(G)$. This category is non-degenerate 
if and only if $\B$ is non-degenerate.

Conversely, to a non-degenerate braided fusion category $\C$ containing a Tannakian subcategory $\E\cong \Rep(G)$, one associates a braided fusion category $\B=\E'\bt_\E \Vect$, 
where $\E'$ is the centralizer of $\E$ in $\C$. This category $\B$ is called a {\em localization} of $\C$ (with respect to $\E$). It is equipped  with  a monoidal $2$-functor $F: G \to \uuPic(\B)$ such that $\C$ is canonically equivalent to $\Gaug(G,\B,F)$.

\subsection{Maximal Tannakian subcategories and the core of a braided fusion category}
\label{Section: max Tannakian}

Let $\B$ be a braided fusion category. 
Let $T(\B),\,T_{max}(\B)$ denote the sets of all Tannakian and maximal Tannakian subcategories of $\B$, respectively. If $G$ is a group acting on $\B$ by braided autoequivalenves,  let $T^G(\B),\,T_{max}^G(\B)$
denote the sets of all $G$-stable Tannakian and maximal $G$-stable Tannakian subcategories of $\B$, respectively.

A Tannakian subcategory $\E$ of a non-degenerate braided fusion category  $\B$ 
is called {\em Lagrangian} if $\E=\E'$, where $\E'$ is the centralizer of $\E$ in $\B$.  
Lagrangian subcategories of $\B$ are in bijection with isomorphism classes
of braided equivalences between $\B$ and centers of pointed fusion categories \cite{DGNO2}. These centers are precisely the representation categories of twisted Drinfeld doubles of finite groups, $\Rep(D^\omega(G))\cong \Z(\Vect_G^\omega)$, where $G$ is a finite group and $\omega$ is a $3$-cocycle on $G$.
Let ${L}(\B)$ denote the set of Lagrangian subcategories of $\B$. 

\begin{definition}
\label{def metabolic}
A non-degenerate category $\B$ with ${L}(\B)\neq \emptyset$ is called {\em metabolic}. 
\end{definition}

Since
a Lagrangian subcategory is necessarily maximal, for metabolic categories one has $L(\B)= T_{max}(\B)$. 

It was shown in \cite{DGNO2} that for any $\E=\Rep(G_\E) \in T_{max}(\B)$ the braided equivalence class of the corresponding localization $\E'\bt_\E \Vect$ does not depend on the choice of $\E$. This category is called the {\em core} of $\B$
and is denoted $\Core(\B)$.  Furthermore, the image of $G_\E$ in $\uAut^{br}(\Core(\B))$ is also independent of the choice of $\E$. 

\begin{remark}
\label{not an invariant}
We saw that $\B$ can be realized as a gauging of 
a canonical category $\Core(\B)$. But this gauging 
is not canonical, since the above group $G_\E$ is not an invariant of $\B$. For example, $\Rep(D(\mathbb{Z}/4\mathbb{Z}))$ contains
maximal Tannakian subcategories $\Rep(\mathbb{Z}/4\mathbb{Z})$ and $\Rep(\mathbb{Z}/2\mathbb{Z} \times \mathbb{Z}/2\mathbb{Z})$ and so it is both a $\mathbb{Z}/4\mathbb{Z}$- and $\mathbb{Z}/2\mathbb{Z} \times \mathbb{Z}/2\mathbb{Z}$-gauging of the trivial category $\Vect$. In other words,
$\Rep(D(\mathbb{Z}/4\mathbb{Z})) \cong \Rep(D^\omega(\mathbb{Z}/2\mathbb{Z} \times \mathbb{Z}/2\mathbb{Z}))$ for some $3$-cocycle $\omega$ on $\mathbb{Z}/2\mathbb{Z} \times \mathbb{Z}/2\mathbb{Z}$.  Braided equivalence between twisted Drinfeld doubles were studied  by many authors, including \cite{GMN, D, N, U, MS}.

\end{remark}

\subsection{Metric groups}
\label{sect metric groups}

Recall that a {\em pre-metric group} $(A,\,q)$ is a finite Abelian group $A$ equipped with a quadratic form $q:A \to k^\times$. There is an equivalence between the groupoid of pre-metric groups and that of pointed braided fusion categories
\[
(A,\,q) \mapsto \C(A,\,q),
\]
where $A$ is identified with the group of invertible objects of $\C(A,\,q)$ and the scalar $q(x)$ is the value of the self-braiding
of $x\in A$.

A pre-metric group $(A,\,q)$ is called a {\em metric group}
if the quadratic form $q$ is non-degenerate. This is equivalent to $\C(A,\,q)$ being a non-degenerate braided fusion category.

A pre-metric group $(A,\,q)$ is {\em anisotropic} if $q(x)\neq 1$
for any $x\neq 0$.

For any prime $p$, let $M_p$ be the monoid of metric $p$-groups. This monoid was described in \cite{W} and \cite{KK} by generators and relations as follows. 

Suppose that $p$ is odd. For any $p^m$-th root of unity $\zeta\in k^\times$ define a 
quadratic form
\begin{eqnarray}
\label{qzetaodd}
& q_\zeta: \mathbb{Z}/{p^m}\mathbb{Z} \to k^\times,\qquad & q_\zeta(x)= \zeta^{x^2}.
\end{eqnarray}
This form is non-degenerate if and only if $\zeta$ is a primitive $p^m$-th root of $1$.
When $\eta$  is another such root,
the metric groups $(\mathbb{Z}/{p^m}\mathbb{Z},\, q_\zeta)$ and $(\mathbb{Z}/{p^m}\mathbb{Z},\, q_{\eta})$
are isomorphic if and only if $\eta=\zeta^{a^2}$ for some $a$
relatively prime to $p$. 

Fix a quadratic non-residue $b$ modulo $p^m$ and let $\bar{\zeta} = \zeta^b$. The isomorphism 
classes of metric groups 
$(\mathbb{Z}/{p^m}\mathbb{Z},\, q_\zeta)$ and $(\mathbb{Z}/{p^m}\mathbb{Z},\, q_{\bar{\zeta}}),\,
m\geq 1$, generate $M_p$.  These generators obey the relations
\begin{equation}
\label{eqn relations p odd}
(\mathbb{Z}/{p^m}\mathbb{Z},\, q_\zeta) \oplus (\mathbb{Z}/{p^m}\mathbb{Z},\, q_\zeta)
\cong (\mathbb{Z}/{p^m}\mathbb{Z},\, q_{\bar{\zeta}}) \oplus (\mathbb{Z}/{p^m}\mathbb{Z},\, q_{\bar{\zeta}})
,\qquad m \geq 1.
\end{equation}

Consider the hyperbolic quadratic form 
\[
h : \mathbb{Z}/{p^m}\mathbb{Z} \times \mathbb{Z}/{p^m}\mathbb{Z}
\to k^\times,\qquad h(x,\,y)=\zeta^{xy}.
\]
The corresponding metric group $(\mathbb{Z}/{p^m}\mathbb{Z} \times \mathbb{Z}/{p^m}\mathbb{Z},\,h)$ is
isomorphic to $(\mathbb{Z}/{p^m}\mathbb{Z},\, q_\zeta) \oplus (\mathbb{Z}/{p^m}\mathbb{Z},\, q_\zeta)$ if 
$p\equiv 1 (\mod 4)$ and
to $(\mathbb{Z}/{p^m}\mathbb{Z},\, q_\zeta) \oplus (\mathbb{Z}/{p^m}\mathbb{Z},\, q_{\bar{\zeta}})$
if $p\equiv 3 (\mod 4)$.

The classification of anisotropic metric $p$-groups is well known. We recall it in Table~\ref{table-1}.

\begin{table}[h!]
\centering
\begin{tabular}{|p{0.3cm}|p{3cm}|p{7.2cm}|p{1.7cm}|}
 \hline
 & Odd prime $p$ & Metric group & \tiny{number~of iso~classes}  \\
\hline
\hline
$1.$ & \text{any} & $(\mathbb{Z}/{p}\mathbb{Z},\, q_{\zeta})$ &   2 \\
\hline
$2.$ & $p\equiv 1(\mod 4)$ &  $(\mathbb{Z}/{p}\mathbb{Z},\, q_{\zeta}) \oplus (\mathbb{Z}/{p}\mathbb{Z},\, 
q_{\bar{\zeta}})$
& 1\\
\hline
$3.$ & $p\equiv 3(\mod 4)$ &  $(\mathbb{Z}/{p}\mathbb{Z},\, q_{\zeta}) \oplus (\mathbb{Z}/{p}\mathbb{Z},\, q_{\zeta})$  & 1\\
\hline
\end{tabular}
\medskip
\caption{\label{table-1} Anisotropic metric $p$-groups (for odd $p$). 
Here $\zeta$ is a primitive $p$th root of $1$.}
\end{table}

A description of $M_2$ is more involved.
For any $2^{m+1}$-th root of unity $\xi$ and a $2^m$-th root of unity $\zeta$, $m\geq 1$, define quadratic forms
\begin{eqnarray}
\label{qzeta}
& q_\xi: \mathbb{Z}/{2^{m}}\mathbb{Z} \to k^\times,\qquad & q_\xi(x)= \xi^{x^2}, \\
\label{hzeta}
& h: \mathbb{Z}/{2^{m}}\mathbb{Z} \times \mathbb{Z}/{2^{m}}\mathbb{Z} \to k^\times,\qquad
& h(x,\,y)= \zeta^{xy},\\
\label{fzeta}
& f: \mathbb{Z}/{2^{m}}\mathbb{Z} \times \mathbb{Z}/{2^{m}}\mathbb{Z} \to k^\times,\qquad
& f(x,\,y)= \zeta^{x^2+xy+y^2}.
\end{eqnarray}
The quadratic form \eqref{qzeta} (respectively,  \eqref{hzeta}, \eqref{fzeta}) is non-degenerate if and only if $\xi$  is a primitive
$2^{m+1}$-th root of unity (respectively,
$\zeta$ is a primitive $2^m$-th root of unity). 
There are $2$ isomorphism classes of metric groups $(\mathbb{Z}/{2}\mathbb{Z},\, q_{i}),\, i^2=-1$,
and $4$ isomorphism classes of metric groups $(\mathbb{Z}/2^{m}\mathbb{Z},\, q_{\xi})$ for each $m\geq 2$.
Along with $(\mathbb{Z}/{2^{m}}\mathbb{Z} \times \mathbb{Z}/{2^{m}}\mathbb{Z},\, h)$
and $(\mathbb{Z}/{2^{m}}\mathbb{Z} \times \mathbb{Z}/{2^{m}}\mathbb{Z},\, f),\, m\geq 1,$ they generate $M_2$.
(Note that the isomorphism types of the last two metric groups are independent of the choice of $\zeta$.) 
A complete list of relations between these generators is quite long and can be found in \cite{KK, Mir}. We only note the relation
\begin{equation}
\label{EF-relation}
\left( \mathbb{Z}/{2^{m}}\mathbb{Z} \times \mathbb{Z}/{2^{m}}\mathbb{Z},\, h \right)^{\oplus 2}
\cong
 \left( \mathbb{Z}/{2^{m}}\mathbb{Z} \times \mathbb{Z}/{2^{m}}\mathbb{Z},\, f\right)^{\oplus 2},\,
 m\geq 1.
\end{equation}

\begin{lemma}
\label{qn=1 homogeneous}
A metric group $((\mathbb{Z}/2^m\mathbb{Z})^k,\,q)$ satisfies
$q^{2^m}=1$ if and only if it is a direct sum of some number of metric groups
\eqref{hzeta} and \eqref{fzeta}.
\end{lemma}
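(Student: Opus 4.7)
The plan is to reduce to the Wall--Kawauchi--Kojima classification of metric $2$-groups which was recalled in the paragraphs just preceding the lemma. The \emph{if} direction is immediate by inspection: in the definitions \eqref{hzeta} and \eqref{fzeta} the base $\zeta$ is a $2^{m}$-th root of unity, so both $h$ and $f$ take values in $\mu_{2^{m}}$, and the same property is manifestly preserved by orthogonal direct sums of such forms.

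For the \emph{only if} direction, I would begin by using the classification to decompose $(A, q)$ into an orthogonal sum of the admissible generators: copies of $(\mathbb{Z}/2^{m_i}\mathbb{Z}, q_{\xi_i})$ as in \eqref{qzeta}, and copies of $((\mathbb{Z}/2^{m_i}\mathbb{Z})^{2}, h_i)$ and $((\mathbb{Z}/2^{m_i}\mathbb{Z})^{2}, f_i)$ as in \eqref{hzeta}--\eqref{fzeta}. Passing to the underlying abelian group, this decomposition exhibits $A$ as a direct sum of cyclic groups of orders $2^{m_i}$; by the uniqueness of the invariant-factor decomposition of a finite abelian group, the resulting multiset of orders must coincide with $\{2^m,\ldots,2^m\}$ ($k$ copies), since $A = (\mathbb{Z}/2^m\mathbb{Z})^k$ is homogeneous. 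Consequently every $m_i$ equals $m$.

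It then remains to rule out the presence of any $q_\xi$-summand. If such a summand occurred, its cyclic generator $x$ would satisfy $q(x) = \xi$, where $\xi$ is a primitive $2^{m+1}$-th root of unity by the non-degeneracy condition attached to \eqref{qzeta}. Raising to the $2^m$-th power would give $q(x)^{2^m} = \xi^{2^m} = -1 \neq 1$, contradicting the hypothesis $q^{2^m} = 1$. Hence only summands of types \eqref{hzeta} and \eqref{fzeta} can appear, which is precisely the claimed decomposition.

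I do not foresee a substantial obstacle. The argument is mechanical once the classification is used as a black box; the only point requiring a sentence of justification is the appeal to uniqueness of invariant factors in order to force $m_i = m$, after which the ruling out of $q_\xi$-summands by the one-line order computation $\xi^{2^m}=-1$ is what does all the work.
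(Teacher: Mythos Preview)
Your argument is correct and follows essentially the same route as the paper's proof, which simply states that such a metric group decomposes as an orthogonal sum of forms \eqref{qzeta}, \eqref{hzeta}, \eqref{fzeta} and that the condition $q^{2^m}=1$ excludes the first type. Your version supplies the details the paper leaves implicit---the invariant-factor argument forcing all $m_i=m$ and the explicit computation $\xi^{2^m}=-1$---but the strategy is the same.
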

\begin{proof}
Such a metric group is isomorphic to an orthogonal sum of forms
\eqref{qzeta}, \eqref{hzeta}, and \eqref{fzeta}, but the condition
$q^{2^m}=1$   holds only for the last two.
\end{proof}

\begin{definition}
\label{def fermion}
A {\em fermion} in a pre-metric group $(A,\,q)$  is an element $x\in A$ of order $2$ such that $q(x)=-1$.  
\end{definition} 

A proof of the classification of anisotropic pre-metric $2$-groups given in Table~\ref{table-2}  can be found, e.g., in \cite[Appendix A]{DGNO2}.

\begin{table}[h!]
\centering
\begin{tabular}{|p{0.5cm}|p{5.7cm}||p{5.5cm}|p{1.7cm}| }
 \hline
  & Metric groups & Values of $q$ & \tiny{number~of iso~classes}  \\
\hline
\hline
$1^*$ & $(1,\, q_{1})$ & $1$ &   1 \\
\hline
$2^*$ & $(\mathbb{Z}/{2}\mathbb{Z},\, q_{i})$ & $1,\, i$ &   2 \\
\hline
$3^*$ & $(\mathbb{Z}/2\mathbb{Z},\, q_{i}) \oplus (\mathbb{Z}/{2}\mathbb{Z},\, q_{i})$  &  $1,\,-1,\,i,\,i$ & 2   \\
\hline
$4^*$ & $(\mathbb{Z}/{4}\mathbb{Z},\, q_{\xi})$ & $1,\,-1,\,\xi,\,\xi$  & 4 \\
\hline
$5^*$ & $(\mathbb{Z}/{4}\mathbb{Z},\, q_{\xi}) \oplus (\mathbb{Z}/{2}\mathbb{Z},\, q_{i})$ & 
$  1,\,-1,\,i,\,-i,\, \xi,\,-\xi,\,i\xi ,\,i\xi $  & 4\\
\hline
$6$ & $(\mathbb{Z}/{2}\mathbb{Z}\times \mathbb{Z}/{2}\mathbb{Z},\, f)$    & $1,\,-1,\,-1,\,-1$  & 1\\
\hline
$7$ &  $(\mathbb{Z}/{2}\mathbb{Z}\times \mathbb{Z}/{2}\mathbb{Z},\, f) \oplus (\mathbb{Z}/{2}\mathbb{Z},\, q_{i})$   &  $ 1,\,-1,\,-1,\,-1,\, i,\,-i,\,-i,\,-i $ & 2   \\
\hline
\end{tabular}
\begin{tabular}{|p{0.5cm}|p{5.7cm}||p{5.5cm}|p{1.7cm}| }
 \hline
 & Pre-metric groups &  &    \\
\hline
\hline
$8^*$ & $(\mathbb{Z}/{2}\mathbb{Z},\, q_{-1})$ & $1,\, -1$ &   1 \\
\hline
$9^*$ & $(\mathbb{Z}/{2}\mathbb{Z},\, q_{-1}) \oplus (\mathbb{Z}/{2}\mathbb{Z},\, q_{i}) $ & $1,\, -1,\, i,\, -i$ &   1 \\
\hline
\end{tabular}
\medskip
\caption{\label{table-2}Anisotropic pre-metric $2$-groups. Here $i^2=\xi^4=-1$. Lines labelled by $*$ contain
pre-metric groups with at most 1 fermion.}
\end{table}
\medskip

\section{Maximal Tannakian subcategories of a gauged category}
\label{Section: gauge}

\subsection{Subcategories of an equivariantization}


Let $T: G \to \uAut^\ot(\A)$ be a monoidal functor, i.e., an action of a group $G$ on a fusion category $\A$. Let $N\subset G$ be a normal subgroup of $G$ and let $\eta: T|_N \to {I}$ be a natural monoidal isomorphism between the restriction of $T$ to $N$ and the trivial monoidal functor $I: N \to \uAut^\ot(\A)$. We will call such $\eta$ a {\em trivialization} of $T$ on $N$. It provides a section $\A \to \A^N$ of the forgetful functor $\A^N \to \A$.

Following \cite{GJ}, we say that $\eta$ is {\em $G$-equivariant} if $\eta_{gng^{-1}} = T_g\eta_nT_{g^{-1}}$ for all $n\in N,\, g\in G$, where we identify $T_gT_nT_{g^{-1}}$ with $T_{gng^{-1}}$  using the monoidal functor structure of $T$.

One can check that $G$-equivariant trivializations of $T|_N$ are in bijection with isomorphism classes of factorizations of $T: G \to \uAut^\ot(\A)$ as a composition $G \to G/N \xrightarrow{\tilde{T}} \uAut^\ot(\A)$, where $\tilde{T}$ is a monoidal functor.

The following theorem was established by Galindo and Jones in \cite{GJ}

\begin{theorem}
\label{GJ theorem}
Let $\A$ be a fusion category along with an action $T:G \to \uAut^\ot(\A)$ of a finite group $G$. Fusion subcategories of $\A^G$ are in bijection
with triples $(\D,\, N,\, \eta)$, where $\D$ is a $G$-stable fusion subcategory of $\A$, $N$ is a normal subgroup of $G$, and $\eta: T|_N \to I$ is a $G$-equinvariant trivialization. 
\end{theorem}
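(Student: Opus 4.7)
The plan is to construct explicit mutually inverse maps in both directions. Writing objects of $\A^G$ as pairs $(X,\,\{u_g\}_{g \in G})$ with $u_g : T_g(X) \xrightarrow{\sim} X$ satisfying the usual cocycle conditions, and letting $F : \A^G \to \A$ denote the forgetful functor, the forward assignment $(\D,\,N,\,\eta) \mapsto \B$ sends a triple to
\[
\B(\D,\,N,\,\eta) := \{(X,\,\{u_g\}) \in \A^G : X \in \D \text{ and } u_n = \eta_n(X) \text{ for all } n \in N\}.
\]
Checking that this is a fusion subcategory is straightforward: monoidality of $\eta$ gives closure under $\otimes$ (and contains the unit); naturality of $\eta$ gives closure under morphisms, subobjects, and quotients; compatibility of $\eta$ with duals gives closure under duals. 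Here the $G$-equivariance of $\eta$ is used to ensure that the defining condition $u_n = \eta_n(X)$ is compatible with the cocycle relation $u_{gn} = u_g \circ T_g(u_n) = u_{gng^{-1}} \circ T_{gng^{-1}}(u_g)$ when one passes between conjugate elements of $N$.

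For the backward direction, given $\B \subset \A^G$ I would set $\D := F(\B)$; since each $u_g$ exhibits $T_g(X) \cong X$, the subcategory $\D$ is automatically $G$-stable. The intersection $\B \cap \Rep(G)$ is a fusion subcategory of $\Rep(G)$, hence equals $\Rep(G/N)$ for a unique normal $N \trianglelefteq G$, yielding the second datum. To extract $\eta$, for each simple $X \in \D$ pick any lift $(X,\,\{u_g\}) \in \B$ and set $\eta_n(X) := u_n$ for $n \in N$. The key well-definedness point is this: any two lifts of a simple $X$ differ by $u'_g = \lambda_g u_g$ for a character $\lambda \in \widehat{G}$, and the $\mathbf{1}$-isotypic component of $(X,\,u)^{\ast} \otimes (X,\,u')$ is the one-dimensional representation $(\mathbf{1},\,\lambda) \in \Rep(G)$; since $\B$ is closed under $\otimes$ and duals, this object lies in $\B \cap \Rep(G) = \Rep(G/N)$, so $\lambda|_N = 1$, and hence $u_n = u'_n$ for all $n \in N$. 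Additivity then extends $\eta$ to all of $\D$, and the monoidal, natural, and $G$-equivariant properties of $\eta$ all translate from the corresponding relations among the $u_g$; the equivariance axiom $\eta_{gng^{-1}} = T_g \eta_n T_{g^{-1}}$ emerges directly from conjugating the cocycle identity $u_{gn} = u_g \circ T_g(u_n)$.

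The last task is to check that the two assignments are mutually inverse. The inclusion $\B \subset \B(\D,\,N,\,\eta)$ is tautological. For the reverse inclusion, take $(X,\,\{u_g\}) \in \B(\D,\,N,\,\eta)$ with $X$ simple; by definition $X \in \D$, so fix a reference lift $(X,\,\{u_g^0\}) \in \B$ with $u_n^0 = \eta_n(X) = u_n$ on $N$. The ratio $\lambda_g := u_g / u_g^0$ is a character of $G$ trivial on $N$, hence defines an object $(\mathbf{1},\,\lambda) \in \Rep(G/N) \subset \B$, and tensoring gives $(X,\,\{u_g\}) = (X,\,\{u_g^0\}) \otimes (\mathbf{1},\,\lambda) \in \B$. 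The reverse composition $\B \mapsto (\D, N, \eta) \mapsto \B$ is then immediate. I expect the main obstacle to be precisely the well-definedness of $\eta$ and the verification of its $G$-equivariance: both rely crucially on the character-extraction trick via $\B \cap \Rep(G) = \Rep(G/N)$, and they are what force the normality of $N$ and the specific compatibility of $\eta$ with the ambient $G$-action.
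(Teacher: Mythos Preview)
Note first that the paper does not prove this theorem; it is quoted from Galindo--Jones \cite{GJ}, so there is no in-paper argument to compare against. I therefore assess your argument on its own.

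Your forward map and the verification that $\B(\D,N,\eta)$ is a fusion subcategory are fine; this subcategory is exactly the $\D^{G/N}$ of Remark~\ref{GJ remark}.

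The backward map has a genuine gap. You claim that each simple $X\in\D=F(\B)$ admits a lift $(X,\{u_g\})\in\B$, but an object of $\A^G$ with underlying object $X$ exists only when $T_g(X)\cong X$ for every $g\in G$, and simples of $\D$ are in general merely \emph{summands} of the underlying objects of simples of $\B$ (which are $G$-orbit sums $\bigoplus_{gH}T_g(Y)$). Concretely: let $G=\mathbb{Z}/2\mathbb{Z}\times\mathbb{Z}/2\mathbb{Z}=\langle a,b\rangle$ act on $\A=\Vect_{\mathbb{Z}/3\mathbb{Z}}$ with $a$ trivial and $b$ by inversion, and take $\B=\A^{G/\langle a\rangle}\hookrightarrow\A^G$ via the canonical trivialization of $T|_{\langle a\rangle}$. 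Then $\D=\A$, $\B\cap\Rep(G)=\Rep(G/\langle a\rangle)$ so $N=\langle a\rangle$ is nontrivial, yet the simple $g\in\D$ is not fixed by $b$ and hence carries no $G$-equivariant structure at all. Your recipe for $\eta_a(g)$ therefore does not get off the ground, and the character-comparison argument for well-definedness presupposes exactly this nonexistent lift.

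The repair is to work one level up: for a simple $Y\in\D$ choose a simple $(X,u)\in\B$ with $Y$ a summand of $X$, first prove that $N\subset\Stab_G(Y)$ (this is where the identification $\B\cap\Rep(G)=\Rep(G/N)$ must do real work, and it requires an argument you have not supplied), and then define $\eta_n(Y)$ to be the $Y$-component of $u_n:T_n(X)\to X$. Your independence and $G$-equivariance checks must then be redone in this setting, comparing two simples of $\B$ lying over the same $G$-orbit rather than two equivariant structures on a single simple of $\A$.
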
 

\begin{remark}
\label{GJ remark}
   The subcategory of $\A^G$ corresponding to the triple $(\D,\, N,\, \eta)$ is $\D^{G/N}$, where the action of $G/N$ on $\A$ is defined using $\eta$ and the embedding of $\D^{G/N}$ into $\A^G$ is given by
\[
\D^{G/N} \hookrightarrow (\D^N)^{G/N} \cong \D^G \hookrightarrow \A^G.
\] 
\end{remark}

\subsection{Subcategories of a central extension}

Let $\B$ be a braided fusion category, let $G$ be a finite group, and let
\begin{equation}
F: G \to \uuPic(\B) : x \mapsto  \C_x
\end{equation}
be a monoidal $2$-functor.  Let 
\begin{equation}
\label{eqn extension C} 
\C:= \bigoplus_{x\in G}\, \C_x,\qquad \C_1=\B,
\end{equation}
be the corresponding central extension of $\B$.  

For a  fusion subcategory $\tilde{\C} \subset \C$ let $\tilde{\B} = \B \cap \tilde{\C}$. 
There is a subgroup $H_{\tilde{\C}}\subset G$ such that
$\tilde{\C}$ is a central  $H_{\tilde{\C}}$-graded extension  of $\tilde{\B}$:
\[
\tilde{\C} = \bigoplus_{x\in H_{\tilde{\C}}}\, \tilde{\C}_x,\qquad \tilde{\C}_1=\tilde{\B},
\]
where $\tilde{\C}_{x}\subset \C_x$ for all $x\in H_{\tilde{\C}}$. We will refer
to $\tilde{\C}$  as a {\em subextension} of $\C$.  We will say that a subextension
$\tilde{\C}\subset \C$ is {\em faithful} if $H_{\tilde{\C}}=G$.

As an abstract central extension,
$\tilde{\C}$ corresponds to a monoidal $2$-functor
\begin{equation}
\label{eqn HGtoPic tildeB} 
\tilde{F}: H_{\tilde{\C}}\to \uuPic(\tilde{\B}) : x \mapsto  \tilde{\C}_x.
\end{equation}
The tensor product $\B \bt \tilde{\C} \to \C$  gives $\B$-module
equivalences $\B  \bt_{\tilde{\B}} \tilde{\C}_x \cong \C_x$  that combine into
a pseudo-natural isomorphism of monoidal $2$-functors:
\begin{equation}
\label{psudo natural iso} 
F|_{H_{\tilde{\C}}} \cong \Ind \circ \tilde{F},  
\end{equation}
where $\Ind :\uuPic(\tilde{\B}) \to \uuPic(\tilde{\C})$
is the induction functor:
\[
\Ind(\M) = \B \bt_{\tilde{\B}} \M. 
\]
Conversely, an isomorphism \eqref{psudo natural iso} produces a subextension of $\C$:
\[
\tilde{\C} = \bigoplus_{x\in H_{\tilde{\C}}}\, \tilde{\C}_x
\subset  \bigoplus_{x\in H_{\tilde{\C}}}\, \B \bt_{\tilde{\B}} \tilde{\C}_x
\cong \bigoplus_{x\in H_{\tilde{\C}}}\, \C_x =\C.
\]

\begin{definition}
\label{def primitive functor}
We will call a monoidal $2$-functor $F: G \to \uuPic(\B)$ {\em primitive}
if it is not induced from any $F: G \to \uuPic(\tilde{\B})$
(as in \ref{psudo natural iso}), where $\tilde{\B} \subset \B$  is a proper fusion subcategory.  Similarly, we will call a central extension
\eqref{eqn extension C} {\em primitive} if it has no proper faithful subextensions.
A $2$-functor (or  a corresponding extension)  is  {\em imprimitive} if it is not primitive.    
\end{definition}

Let us describe subextensions of $\C$ in more explicit terms.  Fix a fusion subcategory $\tilde{\B}\subset \B$. Let $\tilde{\B}^{co}$ be the {\em commutator} of $\tilde{\B}$ in $\C$, i.e., the fusion subcategory of $\C$ generated by all simple objects $X\in \C$ such that $X\ot X^*\in \tilde{\B}$ \cite{GN}. This is the largest subcategory of $\C$ that is a graded extension of $\tilde{\B}$:
\begin{equation}
\label{commutator}
    \tilde{\B}^{co} = \bigoplus_{y\in U(\tilde{\B})}\, (\tilde{\B}^{co})_y,
    \qquad (\tilde{\B}^{co})_1 = \tilde{\B},
\end{equation}
where $U(\tilde{\B})$ is the grading group  of $\tilde{\B}^{co}$.
Let $N(\tilde{\B}) \subset U(\tilde{\B})$ be the normal subgroup such that
\begin{equation}
\label{tildeBco}
\tilde{\B}^{co}\cap \B = \bigoplus_{y\in N(\tilde{\B})}\, (\tilde{\B}^{co})_y
\end{equation}
and let $G(\tilde{\B}) \subset G$ be the projection of $U(\tilde{\B})$. 
We have a short exact sequence of groups:
\begin{equation}
\label{eqn se sequence for UB}
1\to N(\tilde{\B}) \to U(\tilde{\B}) \xrightarrow{\pi} G(\tilde{\B}) \to 1.
\end{equation}


Let $O_{\tilde{\B}} \in H^2(G(\tilde{\B}),\, N(\tilde{\B}))$ be the cohomology class corresponding to the short exact sequence \eqref{eqn se sequence for UB}. 

\begin{proposition}
\label{prop parameterization of subextensions}
Subextensions of  \eqref{eqn extension C} 
are parameterized by the following data:
\begin{itemize}
\item a braided fusion subcategory  $\tilde{\B}\subset \B$,
\item a subgroup $S\subset G(\tilde{\B})$, such that 
$O_{\tilde{\B}}|_S$ is cohomologically trivial, and
\item  an element of an $H^1(S,\, N(\tilde{\B}))$-torsor.
\end{itemize}
\end{proposition}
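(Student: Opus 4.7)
The plan is to establish a bijection in both directions between subextensions $\tilde{\C}\subset\C$ and the described triples. Given $\tilde{\C}\subset\C$, I set $\tilde{\B}:=\tilde{\C}\cap\B$, which is automatically a braided fusion subcategory, and observe that $\tilde{\C}$ is a graded extension of $\tilde{\B}$ sitting inside $\C$; by maximality of the commutator, $\tilde{\C}\subset\tilde{\B}^{co}$. Restricting the $U(\tilde{\B})$-grading of $\tilde{\B}^{co}$ to $\tilde{\C}$, the homogeneous components of $\tilde{\C}$ are indexed by some subset $\Sigma\subset U(\tilde{\B})$. The constraint $\tilde{\C}\cap\B=\tilde{\B}$ combined with \eqref{tildeBco} forces $\Sigma\cap N(\tilde{\B})=\{1\}$, so $\pi|_\Sigma$ is injective with image $H_{\tilde{\C}}=:S\subset G(\tilde{\B})$. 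The inverse bijection is a set-theoretic section $\sigma\colon S\to U(\tilde{\B})$ of $\pi$, and $\tilde{\C}_x=(\tilde{\B}^{co})_{\sigma(x)}$ for each $x\in S$. The key point ensuring that $\tilde{\C}_x$ is a \emph{single} graded piece of $\tilde{\B}^{co}$ (rather than a sum of several) is that $\tilde{\C}_x$ is an invertible $\tilde{\B}$-bimodule subcategory of $\C_x$, hence indecomposable, while each graded piece of $\tilde{\B}^{co}$ is itself an indecomposable $\tilde{\B}$-bimodule category.

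Next I would verify multiplicativity of $\sigma$: closure of $\tilde{\C}$ under tensor product gives $(\tilde{\B}^{co})_{\sigma(x)\sigma(y)}=(\tilde{\B}^{co})_{\sigma(xy)}$, and faithfulness of the $U(\tilde{\B})$-grading forces $\sigma(xy)=\sigma(x)\sigma(y)$. Thus $\sigma$ is a splitting of \eqref{eqn se sequence for UB} over $S$, whose existence is obstructed precisely by $O_{\tilde{\B}}|_S\in H^2(S,N(\tilde{\B}))$. Conversely, given a triple $(\tilde{\B},S,\sigma)$ with $\sigma$ a splitting, the formula
\[
\tilde{\C}(\tilde{\B},S,\sigma)\;:=\;\bigoplus_{x\in S}\,(\tilde{\B}^{co})_{\sigma(x)}
\]
defines a fusion subcategory of $\C$ (closed under $\ot$ by multiplicativity of $\sigma$) which is a central $S$-subextension of $\C$ with base $\tilde{\B}$, and one checks directly that these two constructions are mutually inverse.

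Finally I would address the torsor structure. When splittings over $S$ exist, they form a simply transitive $Z^1(S,N(\tilde{\B}))$-set via $(\sigma,c)\mapsto \sigma\cdot c$. The centrality of the embedding $\B\hookrightarrow\Z(\C)$ implies that every object of $\B$ commutes with every object of $\C$; at the level of gradings this translates into $N(\tilde{\B})\subset Z(U(\tilde{\B}))$, so the conjugation action of $S$ on $N(\tilde{\B})$ is trivial and $B^1(S,N(\tilde{\B}))=0$, whence $Z^1(S,N(\tilde{\B}))=H^1(S,N(\tilde{\B}))$. The main obstacle I expect is the bimodule-indecomposability step in the forward direction: once that is in place, the rest is a direct translation between the commutator description \eqref{commutator} and the standard obstruction theory for splittings of group extensions.
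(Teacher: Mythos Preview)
Your argument is correct and follows essentially the same route as the paper's proof: identify $\tilde{\B}=\tilde{\C}\cap\B$, use $\tilde{\C}\subset\tilde{\B}^{co}$, and translate the problem into the existence and enumeration of splittings of $\pi^{-1}(S)\to S$. In fact you supply more justification than the paper does---the indecomposability step showing each $\tilde{\C}_x$ lands in a single $(\tilde{\B}^{co})_{\sigma(x)}$, and the centrality argument $N(\tilde{\B})\subset Z(U(\tilde{\B}))$ explaining why the torsor is over $H^1$ rather than $Z^1$, are both points the paper's proof leaves implicit.
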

\begin{proof}
A subextension $\tilde{\C}\subset \C$ such that $\tilde{\C}\ \cap \B =\tilde{\B}$
is contained in $\tilde{\B}^{co}$, which is a $U(\tilde{\B})$-graded extension of
$\tilde{\B}$ \eqref{tildeBco}. 
If $S$ is the support of $\tilde{\C}$ in $G$ then $\pi^{-1}(S) \to S$ splits, which is equivalent to the restriction $O_{\tilde{\B}}|_S$ being cohomologically trivial.
Such splittings form a torsor over $H^1(S,\, N(\tilde{\B}))$.

Conversely, if $S$ is a subgroup of $G$ and $\sigma: S \to \pi^{-1}(S)$ is a splitting, then 
\[
\tilde{\C}_\sigma = \bigoplus_{z \in S}\, (\tilde{\B}^{co})_{\sigma(z)}
\]
is an $S$-graded subextension of $\C$. The two above constructions are clearly inverses of each other.
\end{proof}

\begin{remark}
\label{functor defined by section}
Any choice of a splitting $\sigma: S \to \pi^{-1}(S)$  determines a monoidal
$2$-functor 
\begin{equation}
\label{eqn functor FBSs}
F_{(\tilde{\B},S, \sigma)}: S \to \uuPic(\tilde{\B}).
\end{equation}
\end{remark}

\begin{remark}
\label{G-stable subextension}
Subextensions of $\C$ stable under the action of $G$ correspond to a $G$-invariant version of the data from Proposition~\ref{prop parameterization of subextensions}.
Namely, the subcategory $\tilde{\B} \subset \B$ must be $G$-stable and 
the subgroup $S\subset G(\tilde{\B})$ must be normal in this case.
Since the cohomology class of the obstruction $O_{\tilde{\B}}$ is $G$-invariant,
subextensions will be parameterized by a torsor over $H^1(S,\, N(\tilde{\B}))^G$.
\end{remark}

\subsection{Tannakian subcategories of a gauging}

Let $\B$ be a braided fusion category, let $G$ be a finite group along with a monoidal $2$-functor $F: G\to\uuPic(\B)$,  let
\begin{equation}
\label{extension C}
\C =\bigoplus_{g\in G}\, \C_g,\qquad \C_1=\B,
\end{equation}
be the corresponding central extension,
and let $\Gaug(G,\B,F)=\C^G$ be the corresponding gauging. The goal of this Section is to describe the set $\MT_{max}(\Gaug(G,\B,F))$ of maximal Tannakian subcategories of $\Gaug(G,\B,F)$.

\begin{definition}
Let a group $G$ act on a fusion category $\A$. A braiding $b$ on $\A$ is said to be {\em $G$-invariant} if  $G$ 
acts by braided (with respect to $b$) autoequivalences of $\A$. 
\end{definition}

\begin{proposition}
\label{Tannakian datum}
$T_{max}(\Gaug(G,\B,F))$ is in bijection with the set of triples $(N,\,\E,\, b)$, where $N$ is a normal Abelian subgroup of $G$, $\E =\bigoplus_{n\in N}\,\E_n$ is a $G$-stable subextension of $\C$ such that
$\E_1\in T_{max}^G(\B)$, and $b$ is a $G$-invariant Tannakian braiding on $\E$. 
\end{proposition}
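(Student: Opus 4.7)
My plan is to apply the Galindo--Jones classification (Theorem~\ref{GJ theorem}) of fusion subcategories of the equivariantization $\C^G = \Gaug(G,\B,F)$, and then impose the Tannakian and maximality conditions to reduce the data to the triples $(N,\,\E,\,b)$. By Theorem~\ref{GJ theorem}, any fusion subcategory $\T\subset \C^G$ is parameterized by a triple $(\D,M,\eta)$, where $\D\subset\C$ is $G$-stable, $M\triangleleft G$, and $\eta$ is a $G$-equivariant trivialization of the $G$-action on $\D$ restricted to $M$; the corresponding subcategory is $\T=\D^{G/M}$. The first step will be to argue that for maximal Tannakian $\T$ one has $M=\{1\}$: if $\D$ carries a $G$-invariant Tannakian braiding (extracted below from the braiding on $\T$), then $\D^G$ itself is Tannakian in $\C^G$ and strictly contains $\D^{G/M}$ whenever $M\neq\{1\}$, contradicting maximality of $\T$. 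Hence $\T=\E^G$ for $\E:=\D$; as a fusion subcategory of the $G$-graded $\C$, $\E$ is a subextension $\E=\bigoplus_{n\in N}\E_n$ whose support $N$ is normal in $G$ (since $g\cdot\C_x=\C_{gxg^{-1}}$ and $\E$ is $G$-stable).

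The next step is to extract and characterize the braiding datum $b$. Pulling back the symmetric braiding on $\T=\E^G\subset\C^G$ through the faithful forgetful functor $\E^G\to\E$ yields a braiding $b$ on $\E$; the condition that $\T$ is Tannakian is then equivalent to $(\E,b)$ being Tannakian, while $G$-invariance of $b$ is automatic from how the $G$-action lifts through the equivariantization. The grading group $N$ is abelian because the faithful grading group of any braided fusion category is. Conversely, given $(N,\E,b)$ as in the statement, the braided equivariantization $(\E,b)^G\cong\Rep(H\rtimes G)$ (where $(\E,b)\cong\Rep(H)$) is Tannakian, and the inclusion $\E\hookrightarrow\C$ together with the $G$-invariance of $b$ induces an embedding of this equivariantization into $\C^G$ as a Tannakian subcategory.

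The final step matches the maximality conditions: $\T=\E^G$ is maximal Tannakian in $T_{max}(\C^G)$ if and only if $\E_1\in T_{max}^G(\B)$. If $\E_1\subsetneq\E_1'$ for some $G$-stable Tannakian $\E_1'\subset\B$, one enlarges $\E$ to $\E'=\E_1'\cdot\E$ (still an $N$-graded $G$-stable subextension of $\C$) and extends $b$ to a $G$-invariant Tannakian braiding $b'$ on $\E'$ using the compatibility of the braidings on $\E$ and $\E_1'$ within $\C^G$, producing a Tannakian subcategory $(\E')^G\supsetneq\T$ and contradicting maximality. The converse runs analogously by passing from a strictly larger Tannakian subcategory of $\C^G$ to its degree-$1$ part.

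The principal obstacle I anticipate is the second step: precisely matching the braiding inherited on $\E^G$ from $\C^G$ with the equivariantization of $b$ on $\E$. This requires a careful analysis of the $G$-crossed braiding on $\C$ restricted to the subextension $\E$, and showing that its ``correction'' by the $G$-equivariant structure recovers an honest braiding on $\E$—namely the Tannakian braiding $b$ of the theorem. A related subtlety is verifying that the assignment $(N,\E,b)\mapsto\T$ is independent of auxiliary choices and inclusion-preserving in $\E$, which is what makes both the bijection and the maximality correspondence work cleanly.
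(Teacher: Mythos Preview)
There is a genuine error in your first step. You claim that for a maximal Tannakian $\T\subset\C^G$ the Galindo--Jones datum $(\D,M,\eta)$ must have $M=\{1\}$, so that $\T=\E^G$. This is false, and in fact the correct identification is $M=N$, the support of $\E$, so that $\T=\E^{G/N}$ (exactly as in the paper's proof). Your argument for $M=\{1\}$ rests on the assertion that ``$\D^G$ itself is Tannakian in $\C^G$'', but the braiding that $\E^G$ inherits as a fusion subcategory of $\C^G$ comes from the $G$-\emph{crossed} braiding of $\C$, not from the Tannakian braiding $b$ on $\E$; these do not agree when $N\neq\{1\}$, and in that case $\E^G$ is not symmetric. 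A dimension count makes this transparent: $\FPdim(\E^G)=|G|\,\FPdim(\E)=|G|\,|N|\,\FPdim(\E_1)$, whereas every maximal Tannakian subcategory of $\C^G$ has dimension $|G|\,\FPdim(\E_1)$ (e.g.\ $\E_1^G$). So $\E^G$ is strictly too large to be Tannakian unless $N=\{1\}$. In the concrete case $\B=\Vect$, $\C=\Vect_G$, your recipe would produce the subcategory of $\Z(\Vect_G)$ supported on $N$, which has dimension $|G|\,|N|$ and is visibly not symmetric.

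The same confusion undermines your converse: the abstract equivariantization $(\E,b)^G\cong\Rep(H\rtimes G)$ is of course Tannakian, but the tensor embedding $\E^G\hookrightarrow\C^G$ is \emph{not} braided for that braiding, so you have not produced a Tannakian subcategory of $\C^G$. What the paper does instead is use the Tannakian braiding $b$ together with the crossed braiding to build a $G$-equivariant trivialization of the $N$-action on $\E$; in Galindo--Jones terms this is the triple $(\E,N,\eta_b)$, giving $\T(N,\E,b)=\E^{G/N}$. The paper then checks maximality by a one-line dimension computation rather than by your enlargement argument. Your overall strategy (apply Theorem~\ref{GJ theorem}, then impose Tannakian and maximality) is reasonable, but the identification of the normal subgroup in the Galindo--Jones datum and the role of $b$ in producing the trivialization $\eta$ need to be corrected before the rest of the argument can proceed.
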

\begin{proof}
Given a triple $(N,\,\E,\, b)$ there is a trivialization of the action of $N$ on $\E$ coming from
the composition of natural isomorphisms
\[
X_n \ot Y \xrightarrow{c_{X_n,Y}} T_n(Y) \ot X_n \xrightarrow{b_{T_n(Y), X_n}}  X_n \ot T_n(Y),\,\qquad Y\in \E,\, X_n \in \E_n,
\]
where $c$ denotes the $G$-crossed braiding of $\C$. This trivialization is  $G$-invariant thanks to the $G$-invariance  of the braiding $b$. 
It follows from Theorem~\ref{GJ theorem} and Remark~\ref{GJ remark} that there is an action of $G/N$ on $\E$ by braided autoequivalences and, consequently,  a  Tannakian subcategory  
\[
\mathcal{T}(N,\,\E,\, b):= \E^{G/N} \subset \bigoplus_{n\in N}\, \C_n.
\]
Note that $\E_1^G$ is a maximal Tannakian subcategory of $\Gaug(G,\B,F)$. We compute: 
\[
\FPdim(\mathcal{T}(N,\,\E,\, b)) = [G:N] \FPdim(\E) = |G| \FPdim(\E_1) = \FPdim(\E_1^G).
\]
Since all maximal Tannakian subcategories of $\C$ have the same Frobenius-Perron dimension, it follows that  $\mathcal{T}(N,\,\E,\, b) \in T_{max}(\Gaug(G,\B,F))$. 

Conversely, suppose that $\mathcal{T}$ is a maximal Tannakian subcategory of $\Gaug(G,\B,F)$. Let $N_\mathcal{T}\subset G$ be a normal subgroup
such that $\mathcal{T} \cap \Rep(G)= \Rep(G/N_\mathcal{T})$ and let 
\[
\E_\mathcal{T} := \mathcal{T} \bt_{\Rep(G/N_\mathcal{T})} \Vect \subset \C
\]
be the corresponding de-equivariantization. The Tannakian 
category $\E_\mathcal{T}$ is an $N_\mathcal{T}$-graded extension of 
$\mathcal{T} \cap \Rep(G)$  and so the action of $N_\mathcal{T}$ on it
is trivializable.   
Hence, the action of $G$ on $\E_\mathcal{T}$ factors through the braided action of 
$G/N_\mathcal{T}$. In particular, $\E_{\mathcal{T}}$ is
$G$-stable. Thus, we get a triple $(N_\mathcal{T},\,\E_\mathcal{T},\, b_\mathcal{T})$ with required properties,
where $b_\mathcal{T}$ denotes the braiding of $\E_\mathcal{T}$. 

Since equivariantization and de-equivariantization  constructions are inverses of each other, the same is true for the assignments
$(N,\,\E,\, b) \mapsto \mathcal{T}(N,\,\E,\, b)$ and $\mathcal{T}\mapsto (N_\mathcal{T},\,\E_\mathcal{T},\, b_\mathcal{T})$.
\end{proof}

\begin{definition}
\label{def imprimitivity}
We will call a triple  $(N,\, \E,\, b)$ with $N\neq\{1\}$ from Proposition~\ref{Tannakian datum}
a {\em Tannakian imprimitivity  datum} for the central extension  \eqref{extension C}
(or for  the corresponding monoidal $2$-functor $F: G \to \uuPic(\B)$ defining it). We say that an
extension (or $2$-functor) is {\em Tannakian primitive} if it does not admit a Tannakian imprimitivity datum.
\end{definition}

\begin{remark}
\label{imprimitivity facts}
\begin{enumerate}
\item[(1)]   
If $F$ admits a non-trivial Tannakian imprimitivity datum $(N,\, \E,\, b)$ then $F|_N$ is imprimitive in  the sense of Definition~\ref{def primitive functor}.  In this case, the central extension corresponding to $F|_N$ is induced from a $G$-stable  $N$-graded Tannakian subextension.
\item[(2)]  
We have $\T(N,\, \E,\, b) \cap \Rep(G)=\Rep(G/N)$. The support of the  image of $\T(N,\,\E,\,b)$ under the de-equivariantization  functor  $\C^G\to \C$ is $N$.  
\item[(3)]     
A parameterization of Tannakian imprimitivity data
for a central extension \eqref{extension C}
can be given using the machinery of \cite{DN2}, where 
the extension theory of symmetric categories was developed. We prefer not to do it here to avoid extra technical details.
\end{enumerate}
\end{remark}

\subsection{Lagrangian subcategories 
of a twisted Drinfeld double}

Here we apply results of the previous Section to describe Lagrangian subcategories of gaugings of the trivial fusion category $\Vect$. By \cite[Section 4.4.10]{DGNO2}, these are precisely representation categories of twisted Drinfeld doubles of finite groups. 

Let $N$ be a normal Abelian subgroup of a finite group $G$.
Let $H^3(G,\,k^\times)_1$ denote the kernel of the restriction homomorphism
$H^3(G,\,k^\times) \to H^3(N,\,k^\times)$.

\begin{remark}
\label{H3 to H1 hom}
There is a canonical homomorphism
\begin{equation}
\label{mw}
m: H^3(G,\,k^\times)_1 \to H^1(G,\, H^2(N,\,k^\times)): \omega \mapsto m_{\omega,N},
\end{equation}
where $H^2(N,\,k^\times)$ is a right $G$-module via $(g\cdot\mu)(x,y)= \mu(gxg^{-1},gyg^{-1}),\,
g\in G,x,y\in N$.
It is defined as follows.
The cochain
\begin{equation}
\label{mug}
\mu_g(y,z) = \frac{\omega(gyg^{-1}, gzg^{-1}, g)\omega(g,y,z)}{\omega(gyg^{-1}, g, z)}
\end{equation}
defines a tensor structure on the conjugation autoequivalence $\ad(g): \oplus V_x \mapsto  \oplus V_{gxg^{-1}} ,\, g\in G$,   
of $\Vect_G^\omega$, that is, satisfies the identity
\begin{equation}
\label{d2mu}
\frac{\mu_g(xy,z)\mu_g(x,y)}{\mu_g(x,yz)\mu_g(y,z)} = \frac{\omega(gxg^{-1}, gyg^{-1}, gzg^{-1})}{\omega(x,y,z)}
\end{equation}
for all $g, x,y,z \in G$.
Let $d$ and $\partial$ denote differentials in $C^\bullet(G,\, k^\times)$ and in $C^\bullet(G,\, H^2(N,\, k^\times))$, respectively.
Let $\nu \in C^2(G,\, k^\times)$ be  such that $d\nu|_N =\omega|_N$.  It follows from \eqref{d2mu} that
\begin{equation}
\label{mwN}
m_{\omega, N}(g) := \mu_g|_N  \times \frac{\nu}{\nu^g}, \qquad  g \in G,
\end{equation}
is a $2$-cocycle in $Z^2(N,\, k^\times)$.  The cochain $m_{\omega,N} \in C^1(G,\, H^2(N,\, k^\times))$ depends on the choice of $\nu$,
but its cohomology class does not. Indeed, if $\nu'$ is another choice, then  $\nu'|_ N = \nu|_N \times \eta$, where  
$\eta\in  H^2(N,\, k^\times) = C^0(G,\, H^2(N,\, k^\times))$ 
and, hence, $m_{\omega, N}$ changes to $m_{\omega, N} \partial(\eta^{-1})$.
To see that $m_{\omega, N}$ is a $1$-cocycle, we compute
\[
\left(\partial(m_{\omega,N})(g,h)\right) (y, z) = \frac{\mu_{gh}(y,z)}{ \mu_{g}(hyh^{-1},hzh^{-1}) \mu_{h}(y,z)} = \frac{\gamma_{g,h}(yz)}{\gamma_{g,h}(y)\gamma_{g,h}(z)},
\qquad g,h,\in G,\, y,z\in N,
\]
where $\gamma_{g,h}(x) = \frac{\omega(g, hxh^{-1}, h)}{\omega(g,h,x) \omega(ghxh^{-1}g^{-1}, g, h)}$, $g,h,\in G,\, x\in N$. Thus, $\partial(m_{\omega,N})(g,h)=0$  and  the cohomology class of $m_{\omega,N}$ is a well defined element of $H^1(G,\, H^2(N,\, k^\times))$. This element depends on $\omega$ multiplicatively, hence 
the map \eqref{mw} is a group homomorphism.
\end{remark}


\begin{remark}
Elements of $H^1(G,\, H^2(N,\,k^\times))$ correspond to isomorphism classes of monoidal functors $G \to \uAut^\ot(\Vect_N)$ that restrict to the adjoint action of
$G$ on $N$.
\end{remark}

Let
\begin{equation}
\label{OmegaGN}
\Omega(G;N):= \text{Ker}\left( H^3(G,\,k^\times)_1 \xrightarrow{m}  H^1(G,\, H^2(N,\,k^\times) ) \right).
\end{equation}

\begin{remark}
The group $\Omega(G;N)$ appeared in \cite{U} and is related to the problem of classification of categorical Morita equivalence classes of pointed fusion categories.
\end{remark}

The following result was established in \cite{NN}. We reformulate it in more transparent terms
and derive it as a consequence of Proposition~\ref{Tannakian datum}.

\begin{proposition}
\label{Lagrangians in ZVecGw}
There is a bijection between $T_{max}(\Z(\Vec_G^\omega))$  
and the set of pairs $(N,\, b)$, where $N$ is a normal Abelian subgroup of $G$ such that 
$\omega \in \Omega(G;N)$  and $b$ is an element of an $H^2(N,\, k^\times)^G$-torsor.
\end{proposition}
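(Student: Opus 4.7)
The plan is to specialize Proposition~\ref{Tannakian datum} to the presentation $\Z(\Vec_G^\omega)=\Gaug(G,\Vect,F_\omega)$, where $F_\omega:G\to\uuPic(\Vect)$ is the monoidal $2$-functor realizing $\Vec_G^\omega$ as a central extension of $\Vect$. Since $\B=\Vect$, the requirement $\E_1\in T_{max}^G(\Vect)$ forces $\E_1=\Vect$, so the triples of Proposition~\ref{Tannakian datum} reduce to the data of an $N$-supported subextension $\E$ of $\Vec_G^\omega$ with trivial degree-one piece, together with a $G$-invariant Tannakian braiding $b$ on $\E$. I apply Proposition~\ref{prop parameterization of subextensions} with $\tilde\B=\Vect$ to see $\E$ is uniquely determined by $N$: every simple of $\Vec_G^\omega$ is invertible, so $\Vect^{co}=\Vec_G^\omega$, giving $U(\Vect)=G$ and $N(\Vect)=\{1\}$; the obstruction $O_{\Vect}$ and the torsor $H^1(N,\{1\})$ are both trivial, and the unique such $\E$ is the pointed subcategory $\Vec_N^{\omega|_N}\subset\Vec_G^\omega$, which is $G$-stable precisely because $N$ is normal in $G$.

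It remains to identify the set of $G$-invariant Tannakian braidings on $\Vec_N^{\omega|_N}$ with the $H^2(N,k^\times)^G$-torsor indexed by $\omega\in\Omega(G;N)$. A Tannakian braiding has trivial quadratic form, so one exists on $\Vec_N^{\omega|_N}$ only when $[\omega|_N]=0$ in $H^3(N,k^\times)$, i.e., $\omega\in H^3(G,k^\times)_1$. Choose $\nu\in C^2(N,k^\times)$ with $d\nu=\omega|_N$; the resulting braided equivalence $\Vec_N^{\omega|_N}\cong\Vec_N$ identifies Tannakian braidings with alternating bimultiplicative forms $N\times N\to k^\times$, i.e., with $H^2(N,k^\times)$, while the transported conjugation $G$-action on $\Vec_N$ (tensor structure $\mu_g$ from (\ref{mug})) is classified by the $1$-cocycle $m_{\omega,N}$ from (\ref{mwN}).

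The critical step is to unwind the definition of braided autoequivalence for $(\alpha_g,m_{\omega,N}(g))$ on $(\Vec_N,b)$ and verify that $G$-invariance of a Tannakian $b$ reduces to the identity $(g\cdot b)/b=[m_{\omega,N}(g)]$ in $H^2(N,k^\times)$, where $(g\cdot b)(x,y)=b(\alpha_g(x),\alpha_g(y))$. Hence the existence of a $G$-invariant Tannakian braiding is equivalent to $[m_{\omega,N}]$ being a coboundary in $H^1(G,H^2(N,k^\times))$, which is precisely $\omega\in\Omega(G;N)$. When this condition holds, two such braidings differ by a $G$-invariant alternating bimultiplicative form, yielding the torsor structure over $H^2(N,k^\times)^G$ and completing the bijection. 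The main obstacle is the cocycle-level matching between the conjugation tensor structure $\mu_g$ of (\ref{mug}) and the $1$-cocycle $m_{\omega,N}$ of (\ref{mwN}), which is essentially the content of Remark~\ref{H3 to H1 hom}.
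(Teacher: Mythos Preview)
Your proposal is correct and follows essentially the same route as the paper's proof: specialize Proposition~\ref{Tannakian datum} to $\B=\Vect$, identify the unique $G$-stable subextension supported on a normal Abelian $N$ as $\Vec_N^{\omega|_N}$, and then analyze $G$-invariant Tannakian braidings via a trivialization $\nu$ of $\omega|_N$ and the cocycle $m_{\omega,N}$. The only minor differences are cosmetic: you invoke Proposition~\ref{prop parameterization of subextensions} to justify the uniqueness of $\E$ (the paper states this directly), and you transport along $\nu$ to work in $\Vec_N$ rather than computing with the braiding $b$ and the auxiliary form $B=b\,\Alt(\nu)$ inside $\Vec_N^{\omega|_N}$ as the paper does; the resulting $G$-invariance identity and torsor conclusion are the same.
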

\begin{proof}
In the notation of Proposition~\ref{Tannakian datum}, we have $\E_1=\Vect$ and  $\E\cong \Vect_N$, where
$N$ is a normal Abelian subgroup of $G$.  As in Remark~\ref{H3 to H1 hom}, let 
$\nu \in C^2(G,\, k^\times)$ be  such that $d\nu|_N =\omega|_N$ and let $\mu_g,\, g\in G,$ be defined by equation \eqref{mug}. The Tannakian braiding on $\E$ is given by  a function  $b: N \times N\to k^\times$ such that  $B:=b\, \Alt(\nu)$, where $\Alt(\nu)(x,y)=\frac{\nu(x,y)}{\nu(y,x)},\, x,y\in N,$ is an alternating  bilinear form on $N$.  The $G$-invariance of this braiding is equivalent to the condition
\begin{equation}
\label{G-invariance of b}
\frac{b^g}{b}\, \Alt(\mu_g) =1,
\end{equation}
where $b^g(x,y)= b(gxg^{-1},\, gyg^{-1}),\, g\in G,\,x,y\in N$. Equivalently,
\begin{equation}
\label{BgB}
\frac{B^g}{B}\, \Alt(m_{\omega,N}(g)) =1,\qquad g\in G,
\end{equation}
where $m_{\omega,N}$ was defined in \eqref{mwN}. This translates to $\Alt(m_{\omega,N}(g))$
being a coboundary in $C^1(G,\, H^2(N,\,k^\times))$, i.e., 
to  $m_{\omega,N}(g)$ being  cohomologically trivial. Different choices  $b,\, b'$ of the braiding
on $\E$ give an alternating form $B$ satisfying \eqref{BgB} if and only if $\frac{b'}{b}$ is a $G$-invariant
alternating bilinear form on $N$. Since $H^2(N,\, k^\times)^G \cong \Hom(\bigwedge^2 N,\, k^\times)^G$,
we see that $b$ is determined up to an element of an $H^2(N, k^\times)^G$-torsor.
\end{proof}

\begin{remark}
Proposition~\ref{Lagrangians in ZVecGw} can be interpreted as follows. Given a normal Abelian
subgroup $N \subset G$ there is an obstruction $m_{\omega,N}\in H^1(G,\,H^2(N,\,k^\times))$ to the existence of Lagrangian subcategories of $\Z(\Vect_G^\omega)$ supported on $N$. When this obstruction vanishes,
the set of Lagrangian subcategories supported on $N$ is parameterized by a torsor over
$H^0(G,\,H^2(N,\,k^\times)) = H^2(N, k^\times)^G$.
\end{remark}

\begin{remark}
 When $\omega =1$, the $H^2(N, k^\times)^G$-torsor is trivial and $L(\Z(\Vect_G))$ is in bijection with
 pairs $(N,\, B)$, where $B$ is an alternating $G$-invariant bilinear form on $N$ \cite{NN}.
\end{remark}


\section{Tannakian radical, mantle, and reductive braided fusion categories}
\label{section Rad(B)}

\subsection{Radical and mantle}
\label{sect: radical in mantle}

Let $\B$ be a braided fusion category. 

\begin{definition}
\label{def Tannakian radical}
The {\em Tannakian radical} of $\B$ is the intersection of all maximal Tannakian subcategories of $\B$.
\end{definition}

Let $\Rad(\B)$ denote the Tannakian radical of $\B$. This is a Tannakian category, so by Deligne's theorem
there is canonical equivalence 
\begin{equation}
\label{eq RadB=Rep  GB}
\Rad(\B) \cong\Rep(G_{\B})
\end{equation}
for a unique (up to isomorphism) finite group $G_{\B}$.
 
\begin{definition}
\label{def radical group}
The group $G_\B$ from \eqref{eq RadB=Rep  GB} will be called {\em the radical group} of $\B$.
\end{definition}

 \begin{definition}
\label{def mantle}
The {\em mantle}\footnote{The term `mantle' is  used in geology to describe the layer surrounding the Earth's core. It extends outward and encompasses more material. This is similar to the relation between $\Mantle(\B)$ and $\Core(\B)$
in our setting since the latter is a localization of the former.  We hope
this analogy justifies our choice of the name.}
of $\B$ is the localization of $\B$ with respect to $\Rad(\B)$, i.e.,
\begin{equation}
\label{eq Mantle deeq}
\Mantle(\B) = \Rad(\B)' \bt_{\Rad(\B)} \Vect.
\end{equation}
\end{definition}

Clearly, $G_\B$ and $\Mantle(\B)$ are invariants of $\B$. Furthermore, there is a canonical 
normal subgroup $H_\B \subset G_\B$ such that  $\B \boxtimes_{\Rad(\B)} \Vect$ is a faithfully $H_\B$-graded extension of $\Mantle(\B)$ \cite[Section 4.4.8]{DGNO2}. Equivalently, there is  a canonical monoidal $2$-functor
\begin{equation}
\label{eq Mantle functor}
F_\B: H_\B \to \uuPic(\Mantle(\B)).
\end{equation}
Recall that a braided fusion category is {\em almost non-degenerate} if it is either non-degenerate or slightly degenerate.
For an almost non-degenerate category $\B$ we have $H_\B=G_\B$ and 
$\B$ is equivalent to a $G_\B$-gauging of $\Mantle(\B)$.
In this case,
the triple $(G_\B,\,\Mantle(\B),\,F_\B)$ is a complete invariant of $\B$. 

\begin{proposition}
\label{FB is primitive}
Let $\B$ be an almost non-degenerate braided fusion category. The canonical 
monoidal $2$-functor $F_\B:G_\B \to \uuPic(\Mantle(\B))$ is Tannakian primitive.
\end{proposition}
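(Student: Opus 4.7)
The plan is a short argument by contradiction that unfolds the definitions and invokes Proposition~\ref{Tannakian datum} together with Remark~\ref{imprimitivity facts}(2). I will assume that $F_\B$ admits a Tannakian imprimitivity datum with a nontrivial normal Abelian subgroup $N$, convert this into the existence of a maximal Tannakian subcategory of $\B$ that fails to contain $\Rad(\B)$, and then observe that this contradicts the defining property of $\Rad(\B)$ as an intersection.

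In more detail, I would proceed as follows. Suppose for contradiction that $(N,\,\E,\,b)$ is a Tannakian imprimitivity datum for $F_\B$ with $N\neq\{1\}$. Because $\B$ is almost non-degenerate, we have $H_\B=G_\B$ and $\B\cong\Gaug(G_\B,\,\Mantle(\B),\,F_\B)$. Proposition~\ref{Tannakian datum} then associates to $(N,\,\E,\,b)$ a maximal Tannakian subcategory $\T:=\T(N,\,\E,\,b)\subset\B$, and Remark~\ref{imprimitivity facts}(2) yields
\[
\T\cap\Rep(G_\B) \;=\; \Rep(G_\B/N),
\]
which is a \emph{proper} subcategory of $\Rep(G_\B)$ since $N\neq\{1\}$. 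Identifying the canonical subcategory $\Rep(G_\B)\subset\Gaug(G_\B,\,\Mantle(\B),\,F_\B)$ with $\Rad(\B)\subset\B$, this says that $\Rad(\B)\not\subset\T$. But by Definition~\ref{def Tannakian radical}, $\Rad(\B)$ is contained in every maximal Tannakian subcategory of $\B$, so $\Rad(\B)\subset\T$, a contradiction.

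The only step that requires genuine care, and which I view as the main (but minor) obstacle, is the identification of the canonical $\Rep(G_\B)$ sitting inside $\Gaug(G_\B,\,\Mantle(\B),\,F_\B)$ with $\Rad(\B)\subset\B$ under the equivalence of Section~\ref{sect: radical in mantle}. This is built into the construction of the mantle and of $F_\B$: the $G_\B$-graded central extension of $\Mantle(\B)$ determined by $F_\B$ is the de-equivariantization $\B\bt_{\Rad(\B)}\Vect$, so re-equivariantizing along $G_\B$ recovers $\B$ together with its canonical Tannakian subcategory $\Rep(G_\B)$, and by the very definition of the de-equivariantization/equivariantization correspondence this subcategory coincides with $\Rad(\B)$. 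With this identification in hand the argument above is a direct definition chase, and no further work is needed.
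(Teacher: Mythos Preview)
Your proof is correct and follows essentially the same line as the paper's: invoke Proposition~\ref{Tannakian datum} and Remark~\ref{imprimitivity facts}(2) to get $\T(N,\E,b)\cap\Rep(G_\B)=\Rep(G_\B/N)$, then use that $\Rad(\B)=\Rep(G_\B)$ lies in every maximal Tannakian subcategory to force $N=\{1\}$. The paper states this directly rather than as a contradiction, and does not spell out the identification of $\Rep(G_\B)$ with $\Rad(\B)$ that you carefully justified, but the substance is identical.
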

\begin{proof}
Proposition~\ref{Tannakian datum} gives a parametrization  of $\MT_{max}(\B)$ in terms of imprimitivity data $(N,\, \E,\, b)$ associated to $F_\B$, see Definition~\ref{def imprimitivity}. Let $\T(N,\, \E,\,b)$ denote the
corresponding maximal Tannakian subcategory of $\B$.
By Remark~\ref{imprimitivity facts}(2), 
\[
\Rep(G_\B) \cap \T(N,\, \E,\,b) = \Rep(G_\B/N). 
\]
Since $\Rad(\B)= \Rep(G_\B)$ is contained in every maximal Tannakian subcategory of $\B$, we conclude that $N=\{1\}$,
so there is no non-trivial Tannakian imprimitivity data for $F_\B$.    
\end{proof}


Let $G$ be a finite group and let  $N(G)$ be the subgroup of $G$ generated by all normal Abelian subgroups of $G$.
For $\omega\in H^3(G,\, k^\times)$, let $N^\omega(G)$ be the subgroup  of $N(G)$ generated by normal Abelian subgroups $N \subset G$
such that $\omega \in \Omega(G;N)$, see \eqref{OmegaGN}.
The groups $N(G)$ and $N^\omega(G)$ are nilpotent normal subgroups of $G$, as a product of nilpotent normal subgroups of a finite group is nilpotent.

\begin{proposition}
\label{ex RadZVectG}
We have $\Rad(\Z(\Vect_G^\omega)) \cong  \Rep(G/N^\omega(G))$  and 
\begin{equation} 
\Mantle(\Z(\Vect_G^\omega)) \cong \Z(\Vect_{N^\omega(G)}^{\omega|_{N^\omega(G)}}).
\end{equation}
\end{proposition}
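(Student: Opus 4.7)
The plan is to exploit two complementary descriptions of $\B := \Z(\Vect_G^\omega)$: as representations of the twisted double, for identifying maximal Tannakian (= Lagrangian) subcategories via Proposition~\ref{Lagrangians in ZVecGw}, and as the equivariantization $(\Vect_G^\omega)^G$, for the centralizer computation that yields the mantle.

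For the radical, I first note that $N=\{1\}$ trivially satisfies $\omega\in\Omega(G;\{1\})$, so $\Rep(G)\subset \B$ is itself a Lagrangian subcategory, and hence $\Rad(\B)\subseteq \Rep(G)$. By Proposition~\ref{Tannakian datum}, any Lagrangian $L_{(N,b)}$ corresponds to a $G$-stable Tannakian subextension $\Vect_N\subset\Vect_G^\omega$, so $L_{(N,b)}\cap \Rep(G)=\Rep(G/N)$. Intersecting over all admissible pairs $(N,b)$, and using $\Rep(G/N_1)\cap\Rep(G/N_2)=\Rep(G/\langle N_1,N_2\rangle)$, I obtain
\[
\Rad(\B) \;=\; \bigcap_{(N,b)} L_{(N,b)} \;=\; \bigcap_{N} \Rep(G/N) \;=\; \Rep(G/N^\omega(G)),
\]
where the middle intersection ranges over all normal Abelian $N$ with $\omega\in\Omega(G;N)$.

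For the mantle, set $H = N^\omega(G)$ (a normal subgroup of $G$) and identify $\B\simeq (\Vect_G^\omega)^G$ with $G$ acting on $\Vect_G^\omega$ by conjugation, with tensor structure given by $\mu_g$ from \eqref{mug}. Under this identification the half-braiding of $V\in\Rep(G)\hookrightarrow \B$ acts on a homogeneous component $X_g\in\Vect_G^\omega$ by the $g$-action on $V$. Thus a simple $X=\bigoplus_g X_g\in \B$ centralizes $\Rep(G/H)$ iff its support lies in $H$ (since $G/H$ acts faithfully on its regular representation); as $H$ is normal, this support condition is $G$-stable, giving
\[
\Rep(G/H)' \;=\; (\Vect_H^{\omega|_H})^G \;\subset\; (\Vect_G^\omega)^G=\B.
\]
Factoring the equivariantization via normality of $H$ as $(\Vect_H^{\omega|_H})^G \simeq \bigl((\Vect_H^{\omega|_H})^H\bigr)^{G/H}$ and de-equivariantizing by $\Rep(G/H)$ strips off the outer $G/H$-layer, so
\[
\Mantle(\B) \;=\; \Rep(G/H)' \bt_{\Rep(G/H)}\Vect \;=\; (\Vect_H^{\omega|_H})^H \;=\; \Z(\Vect_H^{\omega|_H}),
\]
where the last identification is the standard description of the Drinfeld center as the equivariantization with respect to the inner (conjugation) action.

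The main obstacle is keeping the nested structures straight: verifying that under the equivalence $\B\simeq (\Vect_G^\omega)^G$ the canonical embedding $\Rep(G/H)\subset \B$ matches $\Vect^{G/H}\subset \Vect^G\subset (\Vect_G^\omega)^G$ with the correct half-braiding, and that the $G/H$-action on $\Z(\Vect_H^{\omega|_H})$ produced by the nested factorization coincides with the natural braided action arising from the gauging picture. Both points amount to unwinding the cocycle data $\omega$ and $\mu_g$.
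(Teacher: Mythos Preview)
Your proof is correct and follows essentially the same route as the paper. For the radical you use that $\Rep(G)$ is itself Lagrangian (corresponding to $N=\{1\}$) to force $\Rad(\B)\subset\Rep(G)$ and then intersect via Proposition~\ref{Tannakian datum}; the paper instead invokes Proposition~\ref{Lagrangians in ZVecGw} and Remark~\ref{imprimitivity facts}(2) directly, but the content is identical. For the mantle your argument (support in $H$ characterizes the centralizer, then peel off the outer $G/H$-equivariantization) is exactly the paper's, only spelled out in more detail; the ``main obstacle'' you flag at the end is a genuine bookkeeping point but not a gap, and the paper simply takes it for granted.
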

\begin{proof}
By Proposition~\ref{Lagrangians in ZVecGw}, Lagrangian  subcategories of $\Z(\Vect_G^\omega)$ are parameterized by 
pairs $(N,\, B)$, where $N$ is a normal Abelian subgroup of $G$ and $B$ is an element of a (possibly empty) $H^2(N,\,k^\times)^G$-torsor.
By Remark~\ref{imprimitivity facts}(2),  the intersection of the Lagrangian category corresponding to $(N,\,B)$  with $\Rep(G)$ is $\Rep(G/N)$. Therefore,
\[
\Rad(\Z(\Vect_G^\omega)) =\bigcap_{\substack{\text{$N$ is normal Abelian,} \\ {\omega \in \Omega(G;N)}}}\, \Rep(G/N) = \Rep(G/N^\omega(G)).
\]
The centralizer $\Rep(G/N^\omega(G))'$  consists of objects of $\Z(\Vect_G^\omega)$ supported on $N^\omega(G)$ which is precisely
$(\Vect_{N^\omega(G)}^{\omega|_{N^\omega(G)}})^G$. The mantle of $\Z(\Vect_G^\omega)$ is the $G/N^{\omega}(G)$-de-equivariantization of the latter,
which is $\Z(\Vect_{N^\omega(G)}^{\omega|_{N^\omega(G)}})$
\end{proof}

\subsection{Reductive categories and their properties}
\label{sect: reductive}

\begin{definition}
\label{def reductive}
A braided fusion category $\B$ is {\em reductive} if $\Rad(\B)= \Vect$.
\end{definition}

\begin{remark}
\label{reductive imply almost degenerate}
Note that the maximal Tannakian subcategory of $\Z_{sym}(\B)$ is contained in $\Rad(\B)$. Therefore, if $\B$ is reductive, then  $\Z_{sym}(\B) = \text{$\Vect$ or $\sVect$}$, i.e., $\B$ is almost non-degenerate.
\end{remark}

\begin{remark}
\label{rem nilpotency}
It follows from Proposition~\ref{ex RadZVectG} that  $\Z(\Vect_G^\omega)$ is reductive if and only if $G=N^\omega(G)$. The latter condition implies that $G$
is nilpotent. The converse is false in general.
For example, if $G$ is the dihedral group of order $2^n,\, n\geq 4$, then $N(G) \subsetneq G$.
\end{remark}

\begin{example}
Here are some examples of groups $G$ such that $N(G)=G$.
\begin{enumerate}
    \item[(1)] Let $G$ be a nilpotent group of nilpotency class $\leq 2$, i.e., such that $G/Z(G)$ is Abelian, where $Z(G)$ denotes the center of $G$.  Any cyclic subgroup of $G/Z(G)$ gives rise to a normal Abelian subgroup of $G$. These subgroups generate $G$, so $N(G)=G$ in this case.
    \item[(2)] Let $G=UT(n,\, F)$ be the group of upper unitriangular $n\times n$ matrices, $n\geq 2$, with entries from a finite field $F$. This is a nilpotent group of class $n-1$ generated by matrices $e_{ij}(\lambda),\, 1\leq i< j \leq n,\, \lambda \in F,$ whose only nonzero entry off the main diagonal is $\lambda$ in the $(i,j)$-position.  The group $G$
    is generated by normal Abelian subgroups $N_k =\langle e_{ij}(\lambda) \mid 1\leq k < j,\, \lambda \in F \rangle$, $k=1,\dots, n$.
    \end{enumerate}
\end{example}

\begin{proposition}
\label{maxTan of red nil}
A maximal Tannakian subcategory of a reductive braided fusion category is nilpotent.    
\end{proposition}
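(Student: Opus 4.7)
The plan is to exploit the parametrization of maximal Tannakian subcategories of a gauged braided fusion category given by Proposition~\ref{Tannakian datum}. Let $\E$ be a maximal Tannakian subcategory of $\B$ and write $\E = \Rep(G)$ for a finite group $G$; I want to show $G$ is nilpotent. First I would observe that by Remark~\ref{reductive imply almost degenerate} the reductivity of $\B$ forces $\Z_{sym}(\B) \in \{\Vect, \sVect\}$, so $\B$ is almost non-degenerate. Consequently $\B$ can be realized as the $G$-gauging of its localization $\B_\E = \E'\bt_\E\Vect$ with respect to $\E$: there is a canonical monoidal $2$-functor $F_\E : G \to \uuPic(\B_\E)$ with $\B \cong \Gaug(G, \B_\E, F_\E)$.

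Next I would apply Proposition~\ref{Tannakian datum} to this presentation. Every maximal Tannakian subcategory $\T$ of $\B$ arises as $\T(N, \F, b)$ for a triple consisting of a normal Abelian subgroup $N \subset G$, a $G$-stable $N$-graded subextension $\F$ of the central extension associated to $F_\E$ with $\F_1 \in T_{max}^G(\B_\E)$, and a $G$-invariant Tannakian braiding $b$ on $\F$. By Remark~\ref{imprimitivity facts}(2) the intersection with the original Tannakian datum is computed as
\[
\T(N, \F, b) \cap \E = \Rep(G/N).
\]
Intersecting over all maximal Tannakian subcategories of $\B$ and invoking the hypothesis $\Rad(\B) = \Vect$ yields
\[
\Vect = \Rad(\B) \cap \E = \bigcap_{(N,\F,b)} \bigl(\T(N, \F, b) \cap \E\bigr) = \bigcap_{(N,\F,b)} \Rep(G/N) = \Rep(G/K),
\]
where $K \subset G$ is the subgroup generated by the normal Abelian subgroups $N$ occurring as first coordinates of valid triples. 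Hence $K = G$, and in particular $G$ is generated by a family of normal Abelian subgroups.

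To finish I would appeal to Fitting's theorem: each normal Abelian subgroup is normal and nilpotent, and the subgroup of a finite group generated by normal nilpotent subgroups is nilpotent (this is exactly the fact invoked in the paragraph preceding Proposition~\ref{ex RadZVectG}). Thus $G \subset N(G)$, so $G$ is nilpotent, completing the argument. The only delicate point I expect is verifying that the gauging presentation $\B \cong \Gaug(G, \B_\E, F_\E)$ really applies in the slightly degenerate regime: in the non-degenerate case the machinery of Section~\ref{Section: gauge} applies directly, while in the slightly degenerate case one has to check that $\E$ maximal Tannakian still yields the needed central $G$-extension, which is where the almost non-degeneracy deduction from Remark~\ref{reductive imply almost degenerate} is crucial.
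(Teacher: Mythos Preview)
Your proof is correct and follows essentially the same strategy as the paper's: realize $\B$ as an equivariantization by $G$, show that each maximal Tannakian subcategory meets $\E=\Rep(G)$ in $\Rep(G/N)$ for a normal Abelian $N$, deduce from $\Rad(\B)=\Vect$ that these $N$'s generate $G$, and conclude nilpotency via Fitting. The only cosmetic difference is that the paper works directly with the $G$-crossed de-equivariantization $\C=\B\bt_\E\Vect$ and cites \cite[Proposition~4.56]{DGNO2} to identify the support $N_\F$, whereas you route the same computation through Proposition~\ref{Tannakian datum} and Remark~\ref{imprimitivity facts}(2); your extra step of invoking almost non-degeneracy to ensure the faithful $G$-grading is handled implicitly in the paper's formulation.
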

\begin{proof}
Let $\B$ be a reductive braided fusion category and let $\E=\Rep(G)$ be its maximal Tannakian subcategory.
Then $\B \cong \C^G$, where $\C$ is a $G$-crossed braided fusion category.  For
any Tannakian subcategory $\F \subset \B$ let $N_\F \subset G$ be a normal subgroup such that 
$\E \cap \F =\Rep(G/N_\F) \subset \Rep(G)$.  By \cite[Proposition 4.56]{DGNO2}, the support in $G$ 
of the image of $\F$ under the de-equivariantization functor $\C^G\to \C$ is $N_\F$. Hence, $N_\F$
is Abelian. Since $\bigcap_{\F \in T_{max}(\B)}\,\Rep(G/N_\F)= \Rad(\B) =\Vect$,
normal Abelian subgroups $\N_\F$ generate $G$, therefore, $G$ is nilpotent.
\end{proof}

\begin{proposition}
\label{red WGT nil}  
A reductive braided fusion category is weakly group-theoretical if and only if
it is nilpotent.
\end{proposition}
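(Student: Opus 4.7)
The plan is to prove the two implications separately. The forward direction, that every nilpotent braided fusion category is weakly group-theoretical, is immediate from the definitions, so I will focus on the converse. Let $\B$ be reductive and weakly group-theoretical; I want to show $\B$ is nilpotent.

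First I would dispose of the degenerate cases. By Remark~\ref{reductive imply almost degenerate}, $\B$ is almost non-degenerate, so if $\B$ happens to be symmetric then $\B\cong \Vect$ or $\B\cong \sVect$; both are pointed and therefore nilpotent. Assume from now on that $\B$ is not in $\{\Vect,\sVect\}$. The crucial input at this stage is the Etingof--Nikshych--Ostrik theorem that every weakly group-theoretical braided fusion category not equivalent to $\Vect$ or $\sVect$ contains a non-trivial Tannakian subcategory. This lets me extend to a maximal Tannakian subcategory $\E=\Rep(G)\subset \B$ with $G$ non-trivial, and Proposition~\ref{maxTan of red nil} gives that $G$ is nilpotent.

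Next I would show that $\Core(\B)=\E'\bt_\E \Vect$ lies in $\{\Vect,\sVect\}$. The core is weakly group-theoretical, since de-equivariantization preserves this property. Suppose $\T\subset \Core(\B)$ is a non-trivial Tannakian subcategory; its $G$-saturation (the fusion subcategory generated by the $G$-translates of $\T$) is still symmetric and fermion-free, hence a non-trivial $G$-stable Tannakian subcategory of $\Core(\B)$. Via $G$-equivariantization this corresponds to a Tannakian subcategory of $\E'\subset \B$ strictly containing $\E$, contradicting the maximality of $\E$. Hence $\Core(\B)$ has no non-trivial Tannakian subcategory, and a second application of the ENO theorem forces $\Core(\B)\in\{\Vect,\sVect\}$.

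Finally, $\B$ is a $G$-gauging of $\Core(\B)$, i.e., $\B\cong \C^G$ for a $G$-crossed braided extension $\C=\bigoplus_{g\in G}\,\C_g$ with trivial component $\C_1=\Core(\B)$. Since $\C_{ad}\subseteq \C_1$ and $\C_1$ is pointed (hence nilpotent), the adjoint series of $\C$ drops into $\C_1$ after one step and then terminates, so $\C$ is nilpotent. The equivariantization $\C^G$ of a nilpotent fusion category by a nilpotent group is nilpotent (the upper central series of $\C^G$ descends through the image of $\Rep(G)\subset \C^G$ via the upper central series of $G$, and then continues through the adjoint series of $\C$). Hence $\B$ is nilpotent. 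The main obstacle I anticipate is the careful invocation of the ENO Tannakian existence theorem in the second step, which is the only non-formal input; everything else assembles from Proposition~\ref{maxTan of red nil} and standard facts about graded extensions and equivariantizations.
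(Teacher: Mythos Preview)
Your argument has a genuine gap: the ``Etingof--Nikshych--Ostrik theorem'' you invoke does not exist in the form stated. The Ising category is weakly group-theoretical (indeed nilpotent), is not equivalent to $\Vect$ or $\sVect$, and yet its only proper non-trivial fusion subcategory is $\sVect$, which is not Tannakian. So a weakly group-theoretical braided fusion category need not contain a non-trivial Tannakian subcategory. This breaks both places where you use the statement. In particular, your conclusion that $\Core(\B)\in\{\Vect,\sVect\}$ is simply false: for $\B$ anisotropic (e.g., Ising, or any anisotropic pointed category) one has $\E=\Vect$ and $\Core(\B)=\B$. What \emph{is} true, and what the paper uses, is Natale's theorem~\cite{Na} that the core of a weakly group-theoretical braided fusion category is \emph{nilpotent}; it need not be pointed, let alone trivial.

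There is a second, independent error in your $G$-saturation step: the fusion subcategory generated by the $G$-translates of a Tannakian subcategory is not in general symmetric, so it is not Tannakian. Two Tannakian subcategories of a braided fusion category can generate a non-symmetric subcategory (think of two transversal Lagrangian subgroups of a hyperbolic metric group). So even granting a non-trivial Tannakian $\T\subset\Core(\B)$, you cannot manufacture a $G$-stable one this way. The paper avoids both issues by taking any maximal Tannakian $\E=\Rep(G)$ (possibly $\Vect$), invoking~\cite{Na} to get that $\Core(\B)$ is nilpotent, hence the $G$-crossed extension $\C=\B\bt_\E\Vect$ is nilpotent, and then concluding via the embedding $\B=\C^G\hookrightarrow\Z(\C)$ together with the facts from~\cite{GN} that $\Z(\C)$ is nilpotent and subcategories of nilpotent categories are nilpotent. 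Your final claim that equivariantization of a nilpotent category by a nilpotent group is nilpotent is plausible but your sketch is not a proof; the $\Z(\C)$ route gives this for free.
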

\begin{proof}
By definition, a nilpotent fusion category is weakly group-theoretical.
Conversely, let $\B$ be a reductive weakly group-theoretical braided fusion category and let $\E=\Rep(G)$ be a maximal Tannakian subcategory of $\B$.
By Proposition~\ref{maxTan of red nil}, $G$ is nilpotent.
Furthermore, the core of $\B$ is a nilpotent braided fusion category \cite{Na}. 
Therefore, $\C= \B\bt_{\E} \Vect$ is nilpotent, since it is a graded extension
of the core. Hence, $\Z(\C)$ is nilpotent \cite{GN}.
Since $\B=\C^G$ is equivalent to a subcategory of $\Z(\C)$ \cite[Remark 4.43(i)]{DGNO2},  it follows from \cite{GN} that $\B$ is nilpotent.   
\end{proof}

\begin{proposition}
\label{reductive equivariantization}
Let $\B$ be a braided fusion category and let $G$ be a group acting on $\B$
by braided autoequivalences. Then $\Rad(\B) \subset \Rad(\B^G)\bt_{\Rep(G)} \Vect$.
\end{proposition}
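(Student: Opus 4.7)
The plan is to identify $T_{max}(\B^G)$ with $T_{max}^G(\B)$ via the equivariantization/de-equivariantization correspondence and to observe that every maximal $G$-stable Tannakian subcategory of $\B$ contains $\Rad(\B)$.

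First, I would verify that $\Rad(\B) \subset \B$ is $G$-stable: each $g \in G$ acts by a braided autoequivalence and hence permutes $T_{max}(\B)$, so $\Rad(\B) = \bigcap_{\T \in T_{max}(\B)} \T$ is $G$-invariant. I then prove a lattice lemma: every maximal $G$-stable Tannakian subcategory $\D \subset \B$ contains $\Rad(\B)$. Picking some $\T \in T_{max}(\B)$ with $\D \subset \T$, the join $\D \vee \Rad(\B) \subset \T$ is Tannakian, and being a join of two $G$-stable subcategories is itself $G$-stable; maximality of $\D$ forces $\D = \D \vee \Rad(\B)$, so $\Rad(\B) \subset \D$.

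The key technical step is to show that every $\tilde{\T} \in T_{max}(\B^G)$ contains $\Rep(G)$. Since $\Rep(G) \subset \Z_{sym}(\B^G)$, objects of $\Rep(G)$ braid symmetrically with every $X \in \tilde{\T}$, hence $\tilde{\T} \vee \Rep(G)$ is symmetric. The ribbon identity $\theta_{X \otimes Y} = (c_{Y,X} c_{X,Y})(\theta_X \otimes \theta_Y)$ combined with the trivial double braiding between $\tilde{\T}$ and $\Rep(G)$ and with $\theta = \id$ on both Tannakian factors shows every simple object of $\tilde{\T} \vee \Rep(G)$ has trivial twist; hence the join is Tannakian (by Deligne's characterization of symmetric fusion categories), and the maximality of $\tilde{\T}$ yields $\Rep(G) \subset \tilde{\T}$.

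Combining these ingredients, the bijection from Theorem~\ref{GJ theorem} restricted to Tannakian subcategories produces $\tilde{\T} \leftrightarrow F(\tilde{\T})$ between $T_{max}(\B^G)$ and $T_{max}^G(\B)$, where $F: \B^G \to \B$ is the forgetful functor and $\tilde{\T} = F(\tilde{\T})^G$. Since $(-)^G$ commutes with intersections of $G$-stable subcategories, one computes
\[
\Rad(\B^G) \,\bt_{\Rep(G)}\, \Vect \;=\; F(\Rad(\B^G)) \;=\; F\Bigl(\bigcap_{\tilde{\T}} \tilde{\T}\Bigr) \;=\; \bigcap_{\tilde{\T}} F(\tilde{\T}) \;=\; \bigcap_{\D \in T_{max}^G(\B)} \D \;\supset\; \Rad(\B),
\]
where the last inclusion is the lattice lemma. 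The main obstacle is the twist computation in the third step, which crucially relies on $\Rep(G) \subset \Z_{sym}(\B^G)$ to ensure that $\tilde{\T} \vee \Rep(G)$ stays Tannakian rather than becoming super-Tannakian; everything downstream is formal bookkeeping with the equivariantization correspondence.
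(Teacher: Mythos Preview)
Your proof is correct and follows essentially the same route as the paper: show that every $\tilde{\T}\in T_{max}(\B^G)$ contains $\Rep(G)$, identify $T_{max}(\B^G)$ with $T_{max}^G(\B)$ via (de\nobreakdash-)equivariantization, and deduce $\Rad(\B)^G\subset\Rad(\B^G)$ from $\Rad(\B)\subset\D$ for each $\D\in T_{max}^G(\B)$. The paper's proof simply asserts these steps, while you supply the twist argument for $\Rep(G)\subset\tilde{\T}$ and the lattice lemma explicitly; both are valid, so your write-up is a fleshed-out version of the same argument.
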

\begin{proof}
Any maximal Tannakian subcategory $\mathcal{T} \subset \B^G$ contains $\Rep(G)$ as a subcategory
and so  $\mathcal{T} =\E^G$, where $\E$ is a maximal $G$-stable Tannakian subcategory of $\B$. Hence, $\Rad(\B)^G \subset \mathcal{T}$. Therefore, 
\[
\Rad(\B)^G \subset \bigcap_{\E\in T_{max}^G(\B)} \E^G = \Rad(\B^G).
\]
The result follows by taking the de-equivariantization of both sides.
%
\end{proof}

\begin{definition}
Let $\B$ be a braided fusion category and let $G$ be a group acting on $\B$
by braided autoequivalences.  Define a {\em $G$-radical} of $\B$ by
\[
\Rad^G(\B) :=\bigcap_{\E \in \MT_{max}^G(\B)}\E.
\]
If $\Rad^G(\B)=\Vect$ we will say that $\B$ is {\em $G$-reductive}.
\end{definition}

\begin{corollary}
We have $\Rad^G(\B)=\Rad(\B^G) \bt_{\Rep(G)} \Vect$
and $\Rad(\B)\subset \Rad^G(\B)$. In particular, 
if $\B$ is $G$-reductive, then it is reductive.
\end{corollary}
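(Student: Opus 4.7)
The plan is to derive both claims as direct consequences of Proposition~\ref{reductive equivariantization} together with the bijection between $T_{max}(\B^G)$ and $T_{max}^G(\B)$ that appears inside its proof. First I would establish the equality $\Rad^G(\B) = \Rad(\B^G)\bt_{\Rep(G)}\Vect$. As observed in the proof of Proposition~\ref{reductive equivariantization}, every maximal Tannakian subcategory $\mathcal{T}\subset \B^G$ contains $\Rep(G)$ (which lies in the M\"uger center of $\B^G$) and is therefore of the form $\mathcal{T} = \E^G$ for a unique $\E\in T_{max}^G(\B)$, the inverse map being $\E = \mathcal{T}\bt_{\Rep(G)}\Vect$.

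Using that equivariantization commutes with intersections of fusion subcategories, one then computes
\[
\Rad(\B^G) \;=\; \bigcap_{\mathcal{T}\in T_{max}(\B^G)} \mathcal{T} \;=\; \bigcap_{\E\in T_{max}^G(\B)} \E^G \;=\; \Bigl(\,\bigcap_{\E\in T_{max}^G(\B)} \E\Bigr)^{\!G} \;=\; \Rad^G(\B)^G,
\]
and de-equivariantizing both sides by $\Rep(G)$ yields the asserted identity. The inclusion $\Rad(\B)\subset \Rad^G(\B)$ is then immediate from Proposition~\ref{reductive equivariantization}, which supplies $\Rad(\B) \subset \Rad(\B^G)\bt_{\Rep(G)}\Vect$. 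The final statement follows in one line: if $\Rad^G(\B) = \Vect$, then $\Rad(\B)\subset \Vect$ and hence $\Rad(\B)=\Vect$, so $\B$ is reductive.

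The only step warranting explicit verification is the interchange of intersection with equivariantization, which is routine because an equivariant object $(X,\phi)$ lies in $\E^G$ precisely when $X\in \E$. I anticipate no substantial obstacle here; the corollary is essentially a repackaging of Proposition~\ref{reductive equivariantization} through the correspondence between Tannakian subcategories of $\B^G$ and $G$-stable Tannakian subcategories of $\B$.
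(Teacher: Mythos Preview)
Your proposal is correct and follows essentially the same approach as the paper's proof, which simply notes that the result follows from the proof of Proposition~\ref{reductive equivariantization} together with the fact that (de-)equivariantization commutes with intersections of subcategories. You have merely spelled out this one-line argument in detail.
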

\begin{proof}
This follows from the proof of Proposition~\ref{reductive equivariantization} since
(de-)equivariantization commutes with taking the intersection of subcategories. 
\end{proof}

\begin{corollary}    
\label{Mantle reductive}
Let $\B$ be a braided fusion category. Then $\Mantle(\B)$ is $G_\B$-reductive.
In particular, $\Mantle(\B)$ is reductive. 
\end{corollary}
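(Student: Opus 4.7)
The plan is to apply the previous corollary, which provides the identity $\Rad^G(\mathcal{A}) = \Rad(\mathcal{A}^G) \bt_{\Rep(G)} \Vect$ and the implication ``$G$-reductive $\Rightarrow$ reductive'', to the canonical $G_\B$-action on $\Mantle(\B)$. This reduces both conclusions of the statement to the single categorical identity $\Rad(\Rad(\B)') = \Rad(\B)$.

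To set this up, recall that $\Rad(\B) = \Rep(G_\B)$ lies in the symmetric center of $\Rad(\B)'$, so the de-equivariantization $\Mantle(\B) = \Rad(\B)' \bt_{\Rad(\B)} \Vect$ carries a canonical action of $G_\B$ by braided autoequivalences with equivariantization $\Mantle(\B)^{G_\B} \simeq \Rad(\B)'$. The previous corollary then yields
\[
\Rad^{G_\B}(\Mantle(\B)) \;=\; \Rad(\Rad(\B)') \bt_{\Rep(G_\B)} \Vect,
\]
which collapses to $\Vect$ once the identity $\Rad(\Rad(\B)') = \Rad(\B) = \Rep(G_\B)$ is established.

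The key step, and the place where I expect the only real subtlety, is to show that the sets $T_{max}(\B)$ and $T_{max}(\Rad(\B)')$ coincide; the identity then follows by intersecting. The inclusion $T_{max}(\B) \subset T_{max}(\Rad(\B)')$ is straightforward: any $\mathcal{T} \in T_{max}(\B)$ contains $\Rad(\B)$ by definition of radical and, being symmetric, must lie in $\Rad(\B)'$, and a strictly larger Tannakian subcategory of $\Rad(\B)'$ would also be Tannakian in $\B$, violating maximality. For the reverse inclusion, I will show that every $\mathcal{S} \in T_{max}(\Rad(\B)')$ must contain $\Rad(\B)$. The subcategory $\mathcal{S} \vee \Rad(\B) \subset \Rad(\B)'$ is symmetric because $\Rad(\B)$ lies in the symmetric center of $\Rad(\B)'$. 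By Deligne's classification of symmetric fusion categories, such a category possesses a unique maximal Tannakian subcategory of index $1$ or $2$ which contains every Tannakian subcategory of $\mathcal{S} \vee \Rad(\B)$ --- in particular, both $\mathcal{S}$ and $\Rad(\B)$. Being itself a Tannakian subcategory of $\Rad(\B)'$ containing $\mathcal{S}$, this maximal Tannakian piece must equal $\mathcal{S}$ by maximality, whence $\Rad(\B) \subset \mathcal{S}$. A maximality argument as in the easy direction then shows $\mathcal{S} \in T_{max}(\B)$, completing the proof.
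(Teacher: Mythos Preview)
Your proof is correct and follows essentially the same approach as the paper: both reduce to the identity $\Rad(\Rad(\B)') = \Rad(\B)$ via the equivariantization $\Mantle(\B)^{G_\B}\simeq \Rad(\B)'$ and then invoke the previous corollary. The paper's justification of this identity is terser (it just notes that every maximal Tannakian subcategory of $\B$ lies in $\Rad(\B)'$), and your Deligne-based argument for the reverse inclusion, while correct, can be shortened: since every Tannakian subcategory of $\B$ is contained in a maximal one, which lies in $\Rad(\B)'$, one has $T(\B) = T(\Rad(\B)')$ outright, so the maximal elements and their intersection agree.
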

\begin{proof}
By definition of the radical, any maximal Tannakian subcategory of $\B$
is contained in $\Rad(\B)' =\Mantle(\B)^{G_\B}$ and, hence, 
\[
\Rad(\Mantle(\B)^{G_\B})  =\Rad(\B) =\Rep(G_\B).
\]
Combining this with Corollary~\ref{Mantle reductive}, we have
\[
\Rad^{G_\B}(\Mantle(\B)) = \Rad(\Mantle(\B)^{G_\B}) \bt_{\Rep(G_\B)} \Vect \cong \Vect,
\]
as required.
\end{proof}

\subsection{A complete invariant of a braided fusion category}
\label{Sect complete invrariant}

\begin{definition}
\label{def CGT}
A  triple $(G,\, \C,\, F)$, 
where $G$ is a finite group,  $\C$ is a  braided fusion category,
and $F: G \to \uuPic(\C)$ is a monoidal $2$-functor such that
\begin{enumerate}
    \item[(1)] $F$ is Tannakian primitive in the sense of Definition~\ref{def imprimitivity} and
    \item[(2)] $\C$ is $G$-reductive with respect to the action of $G$ on $\C$ coming from $F$.
\end{enumerate}
will be called a {\em canonical gauging triple}.  
\end{definition}

A $1$-isomorphism between canonical gauging triples $(G,\, \C,\, F)$ and $(\tilde{G},\, \tilde{\C},\, \tilde{F})$ is a triple
$(i,\, I,\,\iota)$
consisting of a group isomorphism $i: G \to \tilde{G}$, a braided equivalence $I: \tilde{\C} \to \C$, and a
pseudo-natural isomorphism of monoidal $2$-functors 
\begin{equation}
\label{iota}
\xymatrix{
G \ar[d]_{i}_{}="a" \ar[rr]^{F} && \uuPic(\C) \ar[d]^{I^*}="b" \\
\tilde{G} \ar[rr]^{\tilde{F}} && \uuPic(\tilde{\C}),
\ar@{}"a";"b"^(.35){}="b2"^(.65){}="c2" \ar@{=>}^{\iota}"b2";"c2"
}
\end{equation}
where $I^*: \uuPic(\C) \to \uuPic(\tilde{\C})$ is a monoidal $2$-equivalence induced from $I$.

If $(i',\, I',\,\iota')$ is another isomorphism between 
$(G,\, \C,\, F)$ and $(\tilde{G},\, \tilde{\C},\, \tilde{F})$
then a $2$-isomorphism between $(i,\, I,\,\iota)$ and $(i',\, I',\,\iota')$ is a pair $(g,\,\Gamma)$, where $g\in G$ is such that $i' = i\circ \ad(g)$ and  $\Gamma: I'  \xrightarrow{\sim} 
g \circ I$ is a natural isomorphism of monoidal functors (here $g\in G$ is viewed as the autoequivalence of $\C$
obtained via the composition $G\to \uuPic(\C) \to \uAut^{br}(\C)$)
such that the following prism of $2$-cells commutes:
\begin{equation}
\label{prism}
\xymatrix{
&&& G \ar[ddll]_{F} \ar@{-->}[dd]^{\ad(g)}="a" \ar[ddrrrr]^{i'}="c" &&  &&  \\
&&& && && \\
& \uuPic(\C)  \ar[dd]_{g^*}="b"  \ar[ddrrrr]^{{I'}^*}="d" &&  G
\ar@{-->}[ddll]_{F} \ar@{-->}[rrrr]_{i}="e"  && && \tilde{G}  \ar[ddll]^{\tilde{F}}  \\
 &&& && && \\
& \uuPic(\C)  \ar[rrrr]_{I^*}="f" && && \uuPic(\tilde{\C}), &&
\ar@{}"a";"b"^(.30){}="x"^(.60){}="y" \ar@2{-->}^{\text{can}}"x";"y"
\ar@{}"c";"d"^(.25){}="x"^(.75){}="y" \ar@2{->}_{\iota}"x";"y"
\ar@{}"c";"e"^(.25){}="x"^(.75){}="y" \ar@2{=}^{}"x";"y"
\ar@{}"d";"f"^(.25){}="x"^(.75){}="y" \ar@2{->}^{\Gamma^*}"y";"x"
\ar@{}"e";"f"^(.10){}="x"^(.60){}="y" \ar@2{-->}_{\iota'}"x";"y"
}
\end{equation}
where $\Gamma^*$ denotes the a pseudo-natural isomorphism of monoidal $2$-functors induced by $\Gamma$ and $\text{can}$
is the canonical pseudo-natural isomorphism from \cite[Theorem 3.10]{DN1}.

It follows that canonical gauging triples form a $2$-groupoid
which we will denote $\mathfrak{G}$.

Given a canonical gauging triple $(G,\C,F)$, the corresponding gauging $\Gaug(G,\C,F)$ is a braided fusion category. The latter category is non-degenerate (respectively, slightly degenerate)
if and only if $\C$ is non-degenerate (respectively, slightly degenerate).

Conversely, in Section~\ref{sect: radical in mantle}  to an almost non-degenerate  braided fusion category $\B$ we associated a  canonical gauging triple
$\mathcal{T}(\B):=(G_\B,\, \Mantle(\B),\, F_\B)$.

\begin{proposition}
\label{functor conditions}
Consider a triple $(G,\, \C,\, F)$, where $G$ is a finite group, $\C$ is a reductive braided fusion category,
and $F: G \to \uuPic(\C)$ is a monoidal $2$-functor. We have   
$\Rad(\Gaug(G,\C,F)) = \Rep(G)$ if and only if the following two conditions are satisfied:
\begin{enumerate}
    \item[(1)] The functor $F$ is Tannakian primitive in the sense of Definition~\ref{def imprimitivity},
    \item[(2)] $\Rad^G(\C)=\Vect$.
\end{enumerate}
\end{proposition}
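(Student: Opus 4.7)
The strategy is to use the parametrization of maximal Tannakian subcategories of $\Gaug(G,\C,F)=\C^G$ provided by Proposition~\ref{Tannakian datum}. Writing $\mathcal{T}(N,\E,b)$ for the maximal Tannakian subcategory associated to a triple $(N,\E,b)$, we have
\[
\Rad(\Gaug(G,\C,F)) \;=\; \bigcap_{(N,\E,b)} \mathcal{T}(N,\E,b),
\]
and the key identity I exploit throughout is Remark~\ref{imprimitivity facts}(2), namely $\mathcal{T}(N,\E,b)\cap\Rep(G)=\Rep(G/N)$.

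For the ``if'' direction, assume (1) and (2). Tannakian primitivity forces every parametrizing triple to have $N=\{1\}$, so every maximal Tannakian subcategory of $\C^G$ is of the form $\E^G$ for some $\E\in T_{max}^G(\C)$. Because equivariantization commutes with intersections of $G$-stable fusion subcategories, I would then compute
\[
\Rad(\C^G) \;=\; \bigcap_{\E\in T_{max}^G(\C)}\E^G \;=\; \Bigl(\bigcap_{\E\in T_{max}^G(\C)}\E\Bigr)^G \;=\; \Rad^G(\C)^G \;=\; \Vect^G \;=\; \Rep(G).
\]

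For the converse, assume $\Rad(\C^G)=\Rep(G)$. To prove (1), suppose toward contradiction there is a triple $(N,\E,b)$ with $N\neq\{1\}$; then $\mathcal{T}(N,\E,b)\supseteq\Rad(\C^G)=\Rep(G)$, so intersecting with $\Rep(G)$ yields $\Rep(G)\subseteq\Rep(G/N)$. Since the right-hand side consists of exactly those representations of $G$ that are trivial on $N$, this inclusion is only possible when $N=\{1\}$, a contradiction. Having established (1), the same computation as in the ``if'' direction now gives $\Rad^G(\C)^G=\Rep(G)=\Vect^G$, and de-equivariantizing (which is inverse to equivariantization) yields $\Rad^G(\C)=\Vect$, proving (2).

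The one technical point whose justification should be confirmed is the interchange $\bigcap_{\E}\E^G=(\bigcap_{\E}\E)^G$ for $G$-stable subcategories. This is essentially tautological: an equivariant object of $\C^G$ lies in $\E^G$ iff its underlying object lies in $\E$, so it lies in every $\E^G$ iff its underlying object lies in $\bigcap_{\E}\E$. Apart from this, the argument is a direct unpacking of Proposition~\ref{Tannakian datum} together with the elementary observation that $\Rep(G)\subseteq\Rep(G/N)$ forces $N$ to be trivial; the reductivity hypothesis on $\C$ does not enter explicitly, and in fact follows a posteriori from (2) via $\Rad(\C)\subseteq\Rad^G(\C)$.
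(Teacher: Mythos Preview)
Your proof is correct and follows the same route as the paper's: the paper's two-line argument states that condition~(1) is equivalent to $\Rep(G)$ being contained in every $\mathcal T\in T_{max}(\B)$ and condition~(2) to $\Rep(G)$ not being a proper subcategory of $\Rad(\B)$, and you have simply unpacked these equivalences via Proposition~\ref{Tannakian datum} and Remark~\ref{imprimitivity facts}(2). Your closing remark that the reductivity hypothesis on $\C$ is redundant (since it follows from~(2)) is also correct.
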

\begin{proof}
Let $\B = \Gaug(G,\C,F)$. 
The first condition is equivalent to  $\Rep(G)$ being contained in every $\E\in \MT_{max}(\B)$, while the second is equivalent to $\Rep(G)$ not being a proper subcategory of $\Rad(\B)$.
\end{proof}

Let $\mathfrak{B}$ denote the $2$-groupoid of almost non-degenerate
braided fusion categories with  braided equivalences as $1$-morphisms and natural isomorphisms of tensor functors as $2$-morphisms.

\begin{theorem}
\label{CGT is a complete invariant} 
The assignments
\begin{equation}
\mathcal{T}: \mathfrak{B}\to \mathfrak{G}: \B \mapsto (G_\B,\, \Mantle(\B),\, F_\B)
\qquad \text{and} \qquad
\Gaug: \mathfrak{G}\to \mathfrak{B} : (G,\, \C,\, F) \mapsto \Gaug(G,\C,F)
\end{equation}
are mutually quasi-inverse $2$-equivalences of $2$-groupoids.
\end{theorem}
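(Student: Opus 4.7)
The plan is to establish the quasi-inverse 2-equivalence by verifying well-definedness of both $\mathcal{T}$ and $\Gaug$ on objects, $1$-morphisms, and $2$-morphisms, and then constructing canonical natural $2$-isomorphisms $\Gaug \circ \mathcal{T} \cong \id_{\mathfrak{B}}$ and $\mathcal{T} \circ \Gaug \cong \id_{\mathfrak{G}}$.

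For well-definedness on objects: given $\B \in \mathfrak{B}$, Proposition~\ref{FB is primitive} shows that $F_\B$ is Tannakian primitive and Corollary~\ref{Mantle reductive} shows that $\Mantle(\B)$ is $G_\B$-reductive, so both conditions of Definition~\ref{def CGT} hold and $\mathcal{T}(\B) \in \mathfrak{G}$. Conversely, if $(G,\C,F) \in \mathfrak{G}$, then $\C$ is reductive and hence almost non-degenerate by Remark~\ref{reductive imply almost degenerate}; as noted immediately after Definition~\ref{def CGT}, the gauging preserves this property, so $\Gaug(G,\C,F) \in \mathfrak{B}$.

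Next I would produce the $2$-isomorphism $\Gaug \circ \mathcal{T} \cong \id_{\mathfrak{B}}$ on objects. The paragraph preceding Proposition~\ref{FB is primitive} furnishes, for any almost non-degenerate $\B$, a canonical equivalence $\B \cong \Gaug(G_\B, \Mantle(\B), F_\B)$ (since $H_\B = G_\B$ in this case). Functoriality in $\B$ is then immediate from the intrinsic nature of radical and mantle: a braided equivalence $\B \to \B'$ permutes maximal Tannakian subcategories, hence induces compatible identifications of radicals, mantles, and canonical triples. For the opposite direction $\mathcal{T} \circ \Gaug \cong \id_{\mathfrak{G}}$, fix $(G,\C,F) \in \mathfrak{G}$ and set $\B := \Gaug(G,\C,F)$. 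Conditions (1) and (2) of Definition~\ref{def CGT} are precisely the hypotheses of Proposition~\ref{functor conditions}, which yields $\Rad(\B) = \Rep(G)$. Hence $G_\B \cong G$ canonically, $\Mantle(\B)$ — defined as the localization of $\B$ at its radical — is canonically equivalent to $\C$, and under these identifications the canonical $2$-functor $F_\B$ from \eqref{eq Mantle functor} coincides with $F$.

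The main obstacle is the $2$-categorical bookkeeping, in particular the matching of $2$-morphisms. A $2$-morphism in $\mathfrak{G}$ is a pair $(g,\Gamma)$ with $g \in G$ producing an inner-automorphism twist $\ad(g)$, subject to the coherence prism \eqref{prism}. The parameter $g$ encodes the fact that the identification of $\B$ with a gauging of $\Mantle(\B)$ is canonical only up to the $G_\B$-action on $\Mantle(\B)$ induced by $F_\B$. I would verify that the assignment sending a monoidal natural isomorphism of braided equivalences in $\mathfrak{B}$ to its induced data on the associated triples lands precisely in such pairs $(g,\Gamma)$ satisfying \eqref{prism}, and is a bijection; commutativity of the prism itself reduces, by construction, to the canonical pseudo-natural isomorphism built into its definition together with the coherence of the gauging $2$-functor. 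Once this coherence check is carried out, the two assignments assemble into mutually quasi-inverse $2$-functors.
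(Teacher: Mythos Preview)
Your proposal is correct and follows essentially the same route as the paper: both invoke Proposition~\ref{FB is primitive}, Corollary~\ref{Mantle reductive}, and Proposition~\ref{functor conditions} for the object-level quasi-inverse, appeal to the equivariantization/de-equivariantization correspondence for $\Gaug\circ\mathcal{T}\cong\id$, and then handle $1$- and $2$-morphisms by tracking how braided equivalences and their natural isomorphisms restrict to the radical and descend to the mantle. The paper is slightly more explicit on the $1$-morphism level (passing through the induced equivalence of $G$-crossed extensions and citing \cite{ENO2} for the resulting isomorphism of monoidal $2$-functors) and on extracting the element $g$ from $\eta|_{\Rad(\B_1)}$, but the overall architecture and level of detail on the $2$-categorical coherence are the same.
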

\begin{proof}
It follows from Propositions~\ref{FB is primitive}
and \ref{functor conditions} that $\mathcal{T}\Gaug(G,\C,F) \cong (G,\C,F)$.  We have $\Gaug\mathcal{T}(\B) \cong \B$ since the equivariantization and de-equivariantization $2$-functors are inverses of each other \cite[Theorem 4.44]{DGNO2}.

An equivalence $\B_1\xrightarrow{\sim} \B_2$ between braided fusion categories restricts to a braided equivalence 
$\Rad(\B_1) \xrightarrow{\sim} \Rad(\B_2)$ and, hence, 
gives an equivalence of crossed braided extensions 
\begin{equation}
\label{crossed equivalence}
\B_1 \bt_{\Rad(\B_1)}\Vect \cong 
\B_2 \bt_{\Rad(\B_2)}\Vect.
\end{equation}
In particular, there is a braided equivalence $I:\Mantle(\B_1) \xrightarrow{\sim} \Mantle(\B_2)$. The equivalence  \eqref{crossed equivalence} yields an isomorphism of the corresponding monoidal $2$-functors \cite{ENO2}. Thus, a $1$-morphism in $\mathfrak{B}$
yields a $1$-morphism in $\mathfrak{G}$. Conversely, a $1$-morphism
between gauging triples gives rise to a braided equivalence between the corresponding gaugings. These assignments are inverse
to each other, again thanks to the equivariantization - 
de-equivariantization correspondence.

Finally, a natural isomorphism $\eta$ between braided autoequivalences $\alpha,\alpha': \B_1 \to \B_2$ yields an isomorphism  $\eta|_{\Rad(\B_1)}$ between their restrictions $\alpha|_{\Rad(\B_1)},
\alpha'|_{\Rad(\B_1)}: \Rad(\B_1) \to \Rad(\B_2)$. 
Let $\iota,\iota': G_{\B_1} \to G_{\B_2}$ denote the corresponding
isomorphisms between the radical groups. Then $\eta|_{\Rad(\B_1)}$
is given by 
$\eta|_V = \iota(g)|_V,\, V \in \Rad(\B_2)= \Rep(G_{\B_2})$ for a unique $g\in G_{\B_1}$ such that $\iota' =\iota \circ \ad(g)$.
Furthermore, a translation by $g$ is an automorphism of the regular algebra $A_{\B_1} =\Fun(G_{\B_1},\, k)\in \Rep(G_{\B_1})$. By definition, $\Mantle(\B_1)$ is the category of $A_{\B_1}$-modules in $\Rad(\B_1)'$, the centralizer of $\Rad(\B_1)$ in $\B_1$, and the above action of $g$ on $A_{\B_1}$ becomes a braided autoequivalence of $\Mantle(\B_1)$.
Thus, $\eta|_{\Rad(\B_1)'}$ yields a natural isomorphism 
$\Gamma: I'\xrightarrow{\sim} g \circ I$, where
 $I , I': \Mantle(\B_2)
\to \Mantle(\B_1)$ are the braided equivalences coming from $\alpha,\alpha'$. Furthermore, there are induced
equivalences of $G$-crossed braided categories
\begin{equation}
\label{bar alpha}
\bar\alpha,\, \bar\alpha' : \B_1\bt_{\Rad(\B_1)} \Vect \xrightarrow{\sim} 
\B_2\bt_{\Rad(\B_2)} \Vect
\end{equation}
and $\eta$ induces a natural isomorphism $\bar\eta$ between these.
The fact that an $\bar\eta$ is an extension of $\Gamma$ is equivalent to the commutativity of the polytope \eqref{prism}.

Thus, $\mathcal{T}$ and $\Gaug$ are mutually quasi-inverse $2$-equivalences.
\end{proof}

\subsection{Towards a parametrization of Morita equivalence classes of fusion categories}

It was shown in \cite{ENO2} that fusion categories $\A_1,\, \A_2$ are categorically 
Morita equivalent if and only if their centers $\Z(\A_1)$ and $\Z(\A_2)$ are braided equivalent.
In other words, the center is a complete invariant of the Morita equivalence class of a fusion category.

\begin{corollary}
\label{complete Morita invariant}
The canonical gauging triple $(G_{\Z(\A)},\, \Mantle(\Z(\A)),\,
F_{\Z(\A)})$ is a complete invariant of the Morita equivalence class of a fusion category $\A$. In particular, the radical and mantle of $\Z(\A)$ are Morita invariants of $\A$.    
\end{corollary}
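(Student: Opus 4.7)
The plan is to deduce this corollary by combining two inputs already available: the theorem of \cite{ENO2} recalled at the start of this subsection, which asserts that the braided equivalence class of $\Z(\A)$ is a complete invariant of the Morita equivalence class of $\A$, together with Theorem \ref{CGT is a complete invariant}, which provides the $2$-equivalence $\mathcal{T}: \mathfrak{B}\to \mathfrak{G}$ sending $\B \mapsto (G_\B,\,\Mantle(\B),\,F_\B)$.

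First I would observe that $\Z(\A)$ is a non-degenerate braided fusion category for every fusion category $\A$, hence is in particular almost non-degenerate and belongs to the source $2$-groupoid $\mathfrak{B}$ of Theorem \ref{CGT is a complete invariant}. Consequently the canonical gauging triple $(G_{\Z(\A)},\,\Mantle(\Z(\A)),\,F_{\Z(\A)}) = \mathcal{T}(\Z(\A))$ is defined and lies in $\mathfrak{G}$. Applying $\mathcal{T}$ to the characterization of Morita equivalence then yields the chain: fusion categories $\A_1$ and $\A_2$ are categorically Morita equivalent if and only if $\Z(\A_1)$ and $\Z(\A_2)$ are braided equivalent, if and only if $\mathcal{T}(\Z(\A_1))$ and $\mathcal{T}(\Z(\A_2))$ are isomorphic objects of $\mathfrak{G}$. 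This establishes that the canonical gauging triple of $\Z(\A)$ both determines and is determined by the Morita class of $\A$, which is precisely the first assertion of the corollary.

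The "in particular" clause is then immediate: any $1$-isomorphism of canonical gauging triples $(i,\, I,\,\iota)$ as in \eqref{iota} consists of an isomorphism $i: G_{\Z(\A_1)}\to G_{\Z(\A_2)}$ of radical groups (hence a braided equivalence $\Rad(\Z(\A_1))\cong \Rad(\Z(\A_2))$ by Deligne's reconstruction) and a braided equivalence $I:\Mantle(\Z(\A_2))\to \Mantle(\Z(\A_1))$ of mantles, so both the radical and the mantle of $\Z(\A)$ are invariants of the Morita class of $\A$. I do not anticipate any genuine obstacle in this proof: the content is essentially a one-line concatenation of Theorem \ref{CGT is a complete invariant} with the Morita--center correspondence, and the only verification required is that $\Z(\A)$ lies in $\mathfrak{B}$, which is standard.
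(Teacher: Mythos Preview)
Your proposal is correct and matches the paper's approach: the corollary is stated without proof, relying on the immediately preceding recollection of the result from \cite{ENO2} (that $\Z(\A)$ is a complete Morita invariant) combined with Theorem~\ref{CGT is a complete invariant}. Your additional observation that $\Z(\A)$ is non-degenerate and hence lies in $\mathfrak{B}$ makes explicit the only point the paper leaves implicit.
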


The mantle of $\Z(\A)$ is a reductive metabolic braided fusion category, see  Definition~\ref{def metabolic}. In particular, it is braided equivalent to the representation category of a twisted Drinfeld double. This double is not uniquely defined. A classification of  braided equivalence classes of Drinfeld doubles is equivalent to a classification of pointed fusion categories up to categorical Morita equivalence. The corresponding symmetric relation on the set of pairs $(G,\omega),\, \omega \in H^3(G,\,k^\times)$, was described in \cite{N, U}. Our theory reduces the parametrization of these classes to the problem of classifying reductive metabolic braided fusion categories and computing their categorical Picard groups. As we saw in Remark~\ref{rem nilpotency} and Proposition~\ref{red WGT nil}, such categories (and underlying groups $G$) are nilpotent. We discuss pointed reductive categories in Section~\ref{Sect reductive pointed} and classify
the smallest non-pointed metabolic reductive categories in Proposition~\ref{Prop p+1 classes}.


\section{The structure of reductive braided fusion categories}
\label{Sect  structure reductive}

\subsection{Tannakian-generated categories}

\begin{definition}
A braided fusion category $\B$ will be called a {\em Tannakian-generated category} or {\em TG-category} if it is generated
by its Tannakian subcategories, i.e., $\B =\bigvee_{\E \in \MT(\B)}\, \E$.
\end{definition}

\begin{proposition}
\label{saturated WGT}
A TG-category is group-theoretical.
\end{proposition}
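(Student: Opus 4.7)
The plan is to exhibit $\B$ as a braided fusion subcategory of $\Z(\Vect_G^\omega)$ for a suitable finite group $G$ and $3$-cocycle $\omega \in Z^3(G,\,k^\times)$. Since $\Z(\Vect_G^\omega)$ is the center of a pointed fusion category, it is group-theoretical; and because the class of group-theoretical fusion categories is closed under fusion subcategories, this will show that $\B$ is group-theoretical.

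I would write $\B = \mathcal{F}_1 \vee \cdots \vee \mathcal{F}_n$ with each $\mathcal{F}_i \cong \Rep(G_i)$ Tannakian, and set $G := G_1 \times \cdots \times G_n$. The external Deligne tensor product $\mathcal{F}_1 \bt \cdots \bt \mathcal{F}_n \cong \Rep(G)$ is itself Tannakian and, for any $\omega$, embeds as a Lagrangian in $\Z(\Vect_G^\omega)$ in the canonical way. The key observation is that the pairwise squared braidings $c_{Y,X} \circ c_{X,Y}$ for $X \in \mathcal{F}_i$ and $Y \in \mathcal{F}_j$ inside $\B$ produce precisely the kind of $G$-equivariant cohomological data that parameterize Lagrangian subcategories of twisted Drinfeld doubles, as described in Proposition~\ref{Lagrangians in ZVecGw}. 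These data determine a cohomology class $[\omega] \in H^3(G,\,k^\times)$, with consistency across triples of generators ensured by the hexagon axioms in $\B$, which translate into the $3$-cocycle condition on $\omega$.

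With such $\omega$ in hand, the twisted Drinfeld center $\Z(\Vect_G^\omega)$ contains $\Rep(G)$ as its canonical Lagrangian Tannakian and reproduces exactly the braiding of $\B$ among the generators $\mathcal{F}_i$. This yields the desired braided fully faithful embedding
\[
\iota: \B \hookrightarrow \Z(\Vect_G^\omega),
\]
and then $\B$ is group-theoretical as a fusion subcategory of a group-theoretical fusion category.

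The hard part will be the explicit cohomological construction of $\omega$ from the pairwise braiding data of the generators, together with the verification that $\iota$ is braided and fully faithful. This requires a careful analysis using the obstruction homomorphism $m: H^3(G,\,k^\times)_1 \to H^1(G,\,H^2(N,\,k^\times))$ from \eqref{mw} and the parameterization of Lagrangians provided by Proposition~\ref{Lagrangians in ZVecGw}; the structural input that makes the bookkeeping work is the maximality of $\Rep(G) \subset \Z(\Vect_G^\omega)$ together with the fact that the hexagon relations in $\B$ restrict the pairwise braiding data to exactly those permitted by a single global $3$-cocycle on $G$.
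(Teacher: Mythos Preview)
Your proposal does not contain a proof; it is an outline with the essential step left undone. You yourself flag that ``the hard part will be the explicit cohomological construction of $\omega$ \dots together with the verification that $\iota$ is braided and fully faithful,'' and nothing in the write-up carries this out. Worse, the sketch as written cannot work: you say the $\mathcal{F}_i$ should land inside the canonical Lagrangian $\Rep(G)\subset\Z(\Vect_G^\omega)$ and that this ``reproduces exactly the braiding of $\B$ among the generators $\mathcal{F}_i$.'' But the braiding on a Lagrangian Tannakian subcategory is \emph{symmetric}, whereas in $\B$ the double braiding $c_{Y,X}c_{X,Y}$ between objects of distinct $\mathcal{F}_i$ and $\mathcal{F}_j$ is typically nontrivial (already for the hyperbolic metric group $(\mathbb{Z}/p\mathbb{Z}\times\mathbb{Z}/p\mathbb{Z},h)$, which is generated by two Tannakian lines whose mutual braiding is a primitive $p$th root of unity). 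So any braided embedding cannot send all the $\mathcal{F}_i$ into a single Lagrangian, and your appeal to Proposition~\ref{Lagrangians in ZVecGw} does not produce $\omega$ in the way you suggest.

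By contrast, the paper's proof avoids constructing any embedding at all. It observes that the multiplication of $\B$ gives a \emph{surjective} tensor functor
\[
\mathcal{F}_1\boxtimes\mathcal{F}_2 \to \mathcal{F}_1\vee\mathcal{F}_2,
\]
and invokes the closure of the class of group-theoretical fusion categories under Deligne products and under surjective tensor images (both from \cite{ENO1}); an induction on the number of Tannakian generators then finishes the argument. In effect, rather than embedding $\B$ into a Drinfeld center, one exhibits $\B$ as a quotient of $\Rep(G_1)\boxtimes\cdots\boxtimes\Rep(G_n)\cong\Rep(G)$, which is trivially group-theoretical. This is a one-line argument once the closure properties are quoted, and it sidesteps entirely the cohomological bookkeeping your approach would require.
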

\begin{proof}
Let $\D,\, \E$ be group-theoretical fusion subcategories of a  TG-category $\B$. The tensor product of $\B$ determines
a surjective tensor functor $\D \bt \E \to \D \vee \E$ and the latter is group-theoretical by \cite{ENO1}.
Hence, the induction on the number of Tannakian subcategories needed to generate $\B$ shows that $\B$ is group-theoretical. 
\end{proof}

\begin{proposition}
\label{TG properties}
\begin{enumerate}
\item[(i)] An almost non-degenerate TG-category is reductive.
\item[(ii)] A metabolic braided fusion category is a TG-category if and only if it is reductive.
\item[(iii)] A reductive $TG$-category is integral and nilpotent.
\end{enumerate}
\end{proposition}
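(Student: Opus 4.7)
The plan is to handle each part using the interplay between the Tannakian radical, the symmetric center, and the join of Tannakian subcategories, leveraging the earlier proposition that TG-categories are group-theoretical.

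For part (i), I will show that $\Rad(\B)$ lands inside the symmetric center $\Z_{sym}(\B)$ whenever $\B$ is a TG-category. The key observation is that any Tannakian subcategory $\E \subset \B$ is contained in some maximal Tannakian subcategory $\mathcal{T}\in T_{max}(\B)$; since $\mathcal{T}$ is symmetric and contains $\Rad(\B)$, the subcategory $\Rad(\B)$ centralizes $\mathcal{T}$ and, in particular, centralizes $\E$. Because $\B = \bigvee_{\E \in T(\B)}\E$, it follows that $\Rad(\B) \subset \Z_{sym}(\B)$. Almost non-degeneracy forces $\Z_{sym}(\B) \in \{\Vect,\sVect\}$, and since $\Rad(\B)$ is Tannakian (as an intersection of Tannakian subcategories is Tannakian) while the only Tannakian subcategory of $\sVect$ is $\Vect$, we conclude $\Rad(\B) = \Vect$.

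For part (ii), the forward implication is immediate from (i), because a metabolic category is non-degenerate and hence almost non-degenerate. For the reverse implication, set $\D := \bigvee_{\E \in L(\B)}\E$ and use that for a metabolic category $L(\B) = T_{max}(\B)$. Since $\B$ is non-degenerate, $\D = (\D')'$, so it suffices to prove $\D' = \Vect$. Using that centralizers convert joins to intersections, together with the defining property $\E = \E'$ for Lagrangian $\E$, I get
\[
\D' \;=\; \bigcap_{\E \in L(\B)} \E' \;=\; \bigcap_{\E \in L(\B)} \E \;=\; \bigcap_{\E \in T_{max}(\B)}\E \;=\; \Rad(\B),
\]
and reductivity gives $\Rad(\B) = \Vect$.

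For part (iii), I will apply Proposition~\ref{saturated WGT}, so that any TG-category is group-theoretical, hence integral (as all group-theoretical fusion categories have integer Frobenius--Perron dimensions) and in particular weakly group-theoretical. Nilpotency then follows by invoking Proposition~\ref{red WGT nil}, which asserts that a reductive weakly group-theoretical braided fusion category is nilpotent.

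The only step that requires care is the centralizer argument in (i) in the slightly degenerate case: I must verify that $\Rad(\B)$, being the intersection of Tannakian subcategories of $\sVect$, cannot contain the fermion. This is handled by the remark that the only Tannakian subcategory of $\sVect$ is $\Vect$, together with the definition of the radical as an intersection of \emph{Tannakian} (not just symmetric) subcategories. The remaining parts reduce to bookkeeping with known results cited above.
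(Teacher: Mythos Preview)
Your proof is correct and follows essentially the same approach as the paper. In (i) both arguments show $\Rad(\B)\subset\Z_{sym}(\B)$ via the self-centralizing property of Tannakian subcategories and the TG hypothesis (the paper writes this as a single chain of inclusions, you unpack it subcategory by subcategory); in (ii) both use that for metabolic $\B$ the radical and the join of Lagrangians are mutual centralizers; and (iii) is identical, invoking Propositions~\ref{saturated WGT} and~\ref{red WGT nil}.
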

\begin{proof}
(i) If a TG-category $\B$ is almost non-degenerate then
\[
\Rad(\B) =  \bigcap_{\E \in \MT_{max}(\B)}\, \E \subset  \left(  \bigvee_{\E \in \MT_{max}(\B)}\, \E' \right)'
\subset  \left( \bigvee_{\E \in \MT_{max}(\B)}\, \E \right)' = \Z_{sym}(\B),
\]
so $\Rad(\B)$ must be trivial.

(ii)  For a metabolic category $\B$ its subcategories $\bigcap_{\E \in \MT_{max}(\B)}\, \E$ and $\bigvee_{\E \in \MT_{max}(\B)}\, \E$
are centralizers of each other. So the former is trivial if and only if the latter coincides with $\B$. 

(iii) This follows from Propositions~\ref{red WGT nil} and \ref{saturated WGT}.
\end{proof}

\begin{remark}
\label{Sylow remark}
It follows from Proposition~\ref{TG properties}(iii) and  \cite[Theorem 6.10]{DGNO1} that a reductive TG-category
is a tensor product of TG-categories of prime power dimension.
\end{remark}

\begin{definition}
\label{def coradical}
The {\em coradical} of a braided fusion category $\B$ 
is   the maximal TG-subcategory of $\B$:
\begin{equation}
\label{Bsat}
\Corad(\B): = \bigvee_{\E \in \MT(\B)}\, \E. 
\end{equation}
\end{definition}

\begin{remark}
\label{basic corad}   
We have $\Rad(\B)= \Rad(\Corad(\B))$.  In particular,
$\B$ is reductive if and only if $\Corad(\B)$ is reductive. 
\end{remark}

\begin{proposition}
For any braided fusion category $\B$ we have $\Corad(\B)\subset \Rad(\B)'$.
When $\B$ is metabolic, $\Corad(\B) = \Rad(\B)'$.
\end{proposition}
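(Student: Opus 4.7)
The plan is to prove the two assertions in turn, the first by an elementary observation about maximal Tannakian subcategories, and the second using the centralizer calculus available in a non-degenerate braided fusion category.

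For the general inclusion $\Corad(\B)\subset \Rad(\B)'$, I would argue as follows. Since the lattice of fusion subcategories of $\B$ is finite, every Tannakian subcategory $\E \subset \B$ is contained in some maximal Tannakian subcategory $\E_{\max}\in T_{\max}(\B)$. By the very definition of the Tannakian radical, $\Rad(\B)\subset \E_{\max}$. Because $\E_{\max}$ is symmetric (indeed Tannakian), any two of its objects centralize each other, and in particular $\E \subset \E_{\max} \subset \Rad(\B)'$. Taking the join over all $\E \in T(\B)$ yields $\Corad(\B)\subset \Rad(\B)'$.

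For the metabolic case, I would use the two facts recalled in Section~\ref{Section: max Tannakian}: first, $\B$ is non-degenerate and $T_{\max}(\B)=L(\B)$; second, every $\E\in L(\B)$ satisfies $\E=\E'$. In a non-degenerate braided fusion category, taking the centralizer is an order-reversing involution on the lattice of fusion subcategories, so it interchanges intersections and joins. Therefore
\[
\Rad(\B)' \;=\; \Bigl(\bigcap_{\E \in L(\B)} \E\Bigr)' \;=\; \bigvee_{\E \in L(\B)} \E' \;=\; \bigvee_{\E \in L(\B)} \E \;\subset\; \Corad(\B),
\]
where the third equality uses $\E=\E'$ for Lagrangian $\E$, and the final inclusion uses that each $\E\in L(\B)$ is Tannakian. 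Combining this with the first part gives $\Corad(\B)=\Rad(\B)'$.

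There is no real obstacle here; the argument is purely formal. The only point requiring attention is that the identity $(\bigcap_{\E}\E)'=\bigvee_{\E}\E'$ relies on $\B$ being non-degenerate, which is exactly the content of the metabolic hypothesis. In the general (possibly degenerate) case, only the inclusion $\Corad(\B)\subset \Rad(\B)'$ survives, as the first part of the statement already records.
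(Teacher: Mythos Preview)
Your proof is correct and follows essentially the same approach as the paper. For the first inclusion the paper writes the chain $\Corad(\B)=\bigvee_{\E\in T_{\max}(\B)}\E\subset\bigvee_{\E\in T_{\max}(\B)}\E'\subset\bigl(\bigcap_{\E\in T_{\max}(\B)}\E\bigr)'=\Rad(\B)'$, using $\E\subset\E'$ for Tannakian $\E$; your version unwinds this by noting $\Rad(\B)\subset\E_{\max}$ and $\E_{\max}$ symmetric, which is the same observation. For the metabolic case both arguments coincide: $\E=\E'$ for Lagrangian $\E$, and non-degeneracy turns the lattice inclusion $\bigvee\E'\subset(\bigcap\E)'$ into an equality.
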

\begin{proof}
Since $\E \subset \E'$ for any Tannakian subcategory $\E\subset \B$, we have
\[
\Corad(\B)=\bigvee_{\E \in \MT_{max}(\B)}\E \subset \bigvee_{\E \in \MT_{max}(\B)}\E'
\subset \left(\bigcap_{\E \in \MT_{max}(\B)} \E\right)' =\Rad(\B).'
\]
When $\B$ is metabolic, $\E = \E'$ for every $\E \in T_{max}(\B)$ and the above inclusions become equalities.
\end{proof}

\begin{corollary}
\label{saturated part of primitive}
Let $\B$ be a reductive braided fusion category. Then $\Corad(\B)'$  is anisotropic, i.e., has 
no non-trivial Tannakian subcategories. In particular, $\Corad(\B)'$  is almost non-degenerate.
\end{corollary}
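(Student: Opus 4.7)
The strategy is to take an arbitrary Tannakian subcategory $\F \subset \Corad(\B)'$ and show that it must lie inside every maximal Tannakian subcategory of $\B$, so that reductivity ($\Rad(\B) = \Vect$) forces $\F$ to be trivial. The second claim will then follow from Deligne's classification of symmetric fusion categories.

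First I would fix $\E \in T_{max}(\B)$ and observe that by the very definition of the coradical (Definition~\ref{def coradical}) one has $\E \subset \Corad(\B)$, and hence $\F \subset \Corad(\B)' \subset \E'$. Thus $\F$ centralizes every maximal Tannakian subcategory of $\B$. Next I would verify that the join $\E \vee \F$ is again Tannakian: since $\E$ and $\F$ are symmetric and centralize each other, every simple summand $Z$ of a tensor product $X \otimes Y$ with simple $X \in \E,\, Y \in \F$ inherits the twist
\[
\theta_{X \otimes Y} \;=\; \theta_X\, \theta_Y\, (c_{Y,X} c_{X,Y}) \;=\; 1,
\]
so $\E \vee \F$ is symmetric with trivial twists, hence admits no fermion, hence is Tannakian. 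Maximality of $\E$ then forces $\E \vee \F = \E$, i.e. $\F \subset \E$. Since $\E \in T_{max}(\B)$ was arbitrary,
\[
\F \;\subset\; \bigcap_{\E \in T_{max}(\B)} \E \;=\; \Rad(\B) \;=\; \Vect,
\]
proving anisotropy.

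For the ``in particular'' statement, I would apply the previous anisotropy to $\Z_{sym}(\Corad(\B)') \subset \Corad(\B)'$. This symmetric fusion category inherits the property of having no non-trivial Tannakian subcategory. By Deligne's theorem it is equivalent to $\Rep(G, z)$ for some finite group $G$ and central element $z$ of order dividing $2$, and its non-trivial Tannakian subcategories correspond to proper subgroups $H \subsetneq G$ with $z \in H$. The absence of such subcategories forces $G \in \{1,\, \langle z \rangle\}$, so $\Z_{sym}(\Corad(\B)')$ is either $\Vect$ or $\sVect$, i.e.\ $\Corad(\B)'$ is almost non-degenerate.

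The only delicate point is the assertion that the join of two centralizing Tannakian subcategories is Tannakian; this is where the twist computation above does the real work, and it is the step I would double-check carefully. Once this standard fact is in hand, the rest of the argument is a direct application of the maximality property of subcategories in $T_{max}(\B)$ and the definition of the Tannakian radical.
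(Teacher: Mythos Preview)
Your argument is correct, but it follows a different path from the paper's proof. The paper does not pass through the Tannakian radical at all: instead it first invokes Remark~\ref{basic corad} and Remark~\ref{reductive imply almost degenerate} to see that $\Corad(\B)$ is itself almost non-degenerate, and then observes that any Tannakian $\F\subset\Corad(\B)'$ automatically lies in $\Corad(\B)$ (by the very definition of the coradical), hence in $\Corad(\B)\cap\Corad(\B)'=\Z_{sym}(\Corad(\B))\in\{\Vect,\sVect\}$, forcing $\F=\Vect$. Your route, by contrast, bypasses the almost non-degeneracy of $\Corad(\B)$ and works directly with the defining intersection $\Rad(\B)=\bigcap_{\E}\E$: you show $\F\subset\E$ for each $\E\in T_{max}(\B)$ by proving the join $\E\vee\F$ is Tannakian via the balance identity $\theta_{X\otimes Y}=(\theta_X\otimes\theta_Y)c_{Y,X}c_{X,Y}$ on $\B_{int}$. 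Both arguments are short; the paper's saves the twist computation by appealing to structural facts already recorded (Remarks~\ref{reductive imply almost degenerate} and~\ref{basic corad}), while yours has the virtue of making the role of reductivity of $\B$ completely transparent. Your treatment of the ``in particular'' clause via Deligne's $\Rep(G,z)$ is essentially the same as what the paper leaves implicit.
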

\begin{proof}
By Remark~\ref{basic corad}, $\Corad(\B)$ is reductive and, hence, is almost non-degenerate. 
Therefore, for any Tannakian subcategory $\E \subset \Corad(\B)'$ we have
\[
\E \cap \Corad(\B)= \E \cap \Z_{sym}(\Corad(\B)) =\Vect.
\]
Since $\Corad(\B)$ contains every Tannakian subcategory of $\B$, we conclude that $\E$ must be trivial.
\end{proof}

Thus, a reductive braided fusion category $\B$ has a canonical TG-subcategory $\Corad(\B)$
and a canonical anisotropic subcategory $\Corad(\B)'$.

\subsection{Projectively isotropic objects}

Let $\B$ be a braided fusion category.

\begin{definition}
\label{types of centralizing}  
We will say that an object $X\in \B$ is {\em symmetric} if $c_{X,X}^2=\id_{X\ot X}$
and {\em projectively symmetric} if  $c_{X,X}^2=\lambda\, \id_{X\ot X}$  for some  $\lambda\in k^\times$.
\end{definition}

For any object $X$ of $\B$ let $\langle X \rangle$ denote the fusion subcategory of $\B$ generated by $X$.
Clearly, $X$ is symmetric if and only if $\langle X \rangle$ is symmetric.

\begin{definition}
\label{def isotropic}  
A symmetric object $X$ is {\em isotropic} if $\langle X \rangle$ is Tannakian.
\end{definition}

If $X$ is projectively symmetric then $\langle X\ot X^* \rangle$ is symmetric by \cite[Proposition 3.22]{DGNO2}.

\begin{definition}
\label{def proj isotropic}  
A projectively symmetric object $X$ is {\em projectively isotropic} if $\langle X\ot X^* \rangle$ is Tannakian.
\end{definition}

\begin{remark}
Our terminology is consistent with the one used in \cite[Section 3.3]{DGNO2}.  
\end{remark}


\begin{lemma}
\label{proj iso is int}
Let $X$ be a projectively isotropic object of $\B$. Then $\FPdim(X)\in \mathbb{Z}$.   
\end{lemma}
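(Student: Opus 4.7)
The plan is to exploit the Tannakian structure of $\E := \langle X \ot X^* \rangle$ via a de-equivariantization argument.  By hypothesis $\E \cong \Rep(G)$ for some finite group $G$; I would let $A \in \E$ be the regular algebra, which is commutative and étale in $\B$ since $\E$ is symmetric, and form the fusion category $\B_A$ of right $A$-modules in $\B$ together with the induction tensor functor $F: \B \to \B_A$, $Y \mapsto A \ot Y$.  The key property to invoke is that $F(V) \cong \mathbf{1}_{\B_A}^{\FPdim(V)}$ for every $V \in \E$, since under $\E \cong \Rep(G)$ and $A \leftrightarrow \Fun(G,k)$ the free $A$-module $A \ot V$ is a sum of copies of the unit $A = \mathbf{1}_{\B_A}$.

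Applying this with $V = X \ot X^* \in \E$ and using that $F$ is strong monoidal, I would obtain
\[
\bar X \ot \bar X^* \;=\; F(X \ot X^*) \;\cong\; \mathbf{1}_{\B_A}^{\FPdim(X)^2},
\]
where $\bar X := F(X)$.  From this identity I would read off that $\bar X$ must be a sum of copies of a single invertible simple object: decomposing $\bar X = \bigoplus_i m_i Y_i$ into pairwise nonisomorphic simples, the condition that only $\mathbf{1}$'s appear in $\bar X \ot \bar X^* = \bigoplus_{i,j} m_i m_j\, Y_i \ot Y_j^*$ forces each $Y_i \ot Y_i^* = \mathbf{1}$ (so every such $Y_i$ is invertible) and then $Y_i \ot Y_j^* \cong \mathbf{1}$ for $m_i m_j > 0$ (forcing $Y_i \cong Y_j$).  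Hence $\bar X = m Y$ for a single invertible $Y$, giving $m^2 = \FPdim(X)^2$; and since $F$ preserves Frobenius--Perron dimensions, $\FPdim(X) = \FPdim_{\B_A}(\bar X) = m \in \mathbb{Z}$.

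The main subtlety will be that, a priori, $\E$ need not centralize the whole of $\B$, so the de-equivariantization is available here only as a fusion category, not as a braided one.  This suffices for the argument, however, because commutativity and separability of $A$ in $\B$ --- the only properties needed to form $\B_A$ and the tensor functor $F$ --- follow directly from $\E$ being symmetric.  Once these formal ingredients are in place, the integrality of $\FPdim(X)$ falls out of the fusion-ring identity $\bar X \ot \bar X^* \cong \mathbf{1}^n$ in $\B_A$.
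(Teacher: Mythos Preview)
Your proof is correct. Both your argument and the paper's pass to the de-equivariantization by $\E=\langle X\ot X^*\rangle\cong\Rep(G)$: the paper cites \cite[Lemma~3.15]{DGNO2} to place $X$ in a graded extension of $\E$ and then invokes \cite{DN2} to identify each homogeneous component with a twisted representation category of $G$, whence $\FPdim(X)$ is the degree of a projective representation; you instead work directly with the free-module functor $F:\B\to\B_A$ and extract from $F(X)\ot F(X)^*\cong \mathbf{1}^{\FPdim(X\ot X^*)}$ that $F(X)=mY$ with $Y$ invertible, so $\FPdim(X)=m$. Your route is more self-contained---it avoids the two external citations and the projective-representation description---while the paper's version yields the extra structural information that $X$ sits in a single homogeneous component equivalent to $\Rep^\mu(G)$ for some Schur multiplier $\mu$. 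The two are really the same picture: your invertible $Y$ is precisely the grading label of that component, and your $m$ is the degree of the corresponding projective representation.
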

\begin{proof}
It follows from \cite[Lemma 3.15]{DGNO2}  that $X$ is a homogeneous object
in a graded extension of some Tannakian subcategory $\E=\Rep(G) \subset \B$. But any homogeneous component of this extension is equivalent to the category of projective representations of $G$ with a fixed Schur multiplier \cite{DN2}. Therefore, $\FPdim(X)$ is equal to the degree of a projective representation of $G$.
\end{proof}

Let $\B_{int}$ be the maximal integral fusion subcategory of $\B$, i.e., the subcategory generated by 
all objects $X\in \B$ such that $\FPdim(X)\in \mathbb{Z}$. By \cite[Corollary 2.24]{DGNO2}, there is a canonical ribbon structure $\theta$
on $\B_{int}$ with respect to which categorical dimensions coincide with the Frobenius-Perron dimensions (this is true regardless of the existence of a ribbon structure on $\B$). 

\begin{lemma}
\label{thetaX=1}
Let $X$ be an object of $\B$.
\begin{enumerate}
    \item[(i)]  A symmetric object $X\in \B$ is isotropic if and only if $\theta_X=\id_X$.   
    \item[(ii)] If $X$ is a projectively isotropic simple object then $c_{X,X}^2 =\theta_X^2 \,\id_{X\ot X}$.
\end{enumerate}
\end{lemma}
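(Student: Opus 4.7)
The plan is to prove (i) and (ii) separately. Part (i) is an immediate consequence of Deligne's theorem on symmetric fusion categories, and part (ii) is a balance-equation calculation, using (i) as an input, combined with the fact that $X$ is a direct summand of $X\ot X^*\ot X$.

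For (i), I will use that in any symmetric fusion category the balance equation reduces to $\theta_{V\ot W}=\theta_V\ot \theta_W$, since the double braiding is trivial. Hence $\theta$ restricted to $\langle X\rangle$ is a monoidal natural automorphism of the identity functor, so its value on every object of $\langle X\rangle$ is determined by $\theta_X$ together with $\theta_{X^*}=\theta_X$. Deligne's theorem identifies $\langle X\rangle$ with $\Rep(G,z)$ for a finite (super)group $(G,z)$, $z$ central of order $\le 2$, with the canonical twist acting as $z$; this symmetric category is Tannakian iff $z=1$, iff the twist is trivial on all objects, iff $\theta_X=\id_X$.

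For (ii), the key structural observation is that $X$ is a direct summand of $X\ot X^*\ot X$ via $\coev_X\ot \id_X$ and $\id_X\ot \ev_X$ (by the snake identity), so by naturality of $\theta$ the scalar by which $\theta$ acts on the summand $X$ coincides with the restriction of $\theta_{X\ot X^*\ot X}$, once the latter is shown to be a global scalar. I will compute $\theta_{X\ot X^*\ot X}$ using the balance equation. Writing $c_{X,X}^{2}=\lambda\,\id$, the hypothesis that $X$ is projectively isotropic together with (i) gives $\theta_{X\ot X^*}=\id_{X\ot X^*}$; balance applied to $X\ot X^*$, combined with $\theta_{X^*}=\theta_X$, then forces $c_{X^*,X}\circ c_{X,X^*}=\theta_X^{-2}\,\id$. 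Expanding $c_{X\ot X^*,X}$ and $c_{X,X\ot X^*}$ via the two hexagon axioms and substituting this identity collapses the double braiding to $c_{X,X\ot X^*}\circ c_{X\ot X^*,X}=\lambda\theta_X^{-2}\,\id$. Balance on $(X\ot X^*)\ot X$ now yields $\theta_{X\ot X^*\ot X}=\theta_X\cdot \lambda\theta_X^{-2}\,\id=\lambda\theta_X^{-1}\,\id$, and comparing with the $X$-summand gives $\theta_X=\lambda\theta_X^{-1}$, hence $\lambda=\theta_X^2$, as required.

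The main technical step is the hexagon unfolding that collapses $c_{X,X\ot X^*}\circ c_{X\ot X^*,X}$ to a scalar: the two hexagons produce four braidings, of which the innermost pair becomes the known monodromy $c_{X,X^*}\circ c_{X^*,X}=\theta_X^{-2}\id$, while the outer pair composes to $c_{X,X}^{2}=\lambda\,\id$. This is routine but must be carried out carefully so that the resulting endomorphism lives on the correct tensor ordering; once it is done, everything else is bookkeeping of scalars.
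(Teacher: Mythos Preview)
Your proof is correct. Part (i) matches the paper's treatment: both appeal to Deligne's theorem, the paper simply declaring it ``well known'' while you spell out that the canonical twist on $\langle X\rangle$ is the action of the central element $z$ in $\Rep(G,z)$.

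For part (ii) your route genuinely differs from the paper's. Both arguments start the same way: from $\theta_{X\ot X^*}=\id$ and the balance equation one gets $c_{X^*,X}c_{X,X^*}=\theta_X^{-2}\,\id$. At this point the paper simply invokes \cite[Lemma 3.15(ii)]{DGNO2}, which (applied to the pair $X,X$) says that the monodromy $c_{X^*,X}c_{X,X^*}$ is the inverse of the scalar $c_{X,X}^2$, immediately yielding $\lambda=\theta_X^2$. You instead bypass that external lemma by embedding $X$ as a summand of $X\ot X^*\ot X$, expanding the double braiding $c_{X,X\ot X^*}\circ c_{X\ot X^*,X}$ via the hexagon axioms to $\lambda\theta_X^{-2}\,\id$, applying balance once more to get $\theta_{X\ot X^*\ot X}=\lambda\theta_X^{-1}\,\id$, and then reading off $\theta_X$ on the summand. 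Your computation is sound (note that you need the monodromy on $X^*\ot X$, not on $X\ot X^*$, but this is also $\theta_X^{-2}$ since $X^*\ot X\cong(X\ot X^*)^*$ lies in the same Tannakian subcategory). The trade-off: your argument is fully self-contained and avoids citing \cite{DGNO2}, at the cost of one extra balance computation; the paper's version is a two-line appeal to an existing duality lemma.
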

\begin{proof}
(i) This is a well-known fact of symmetric fusion categories.

(ii)  We have $\theta_{X \ot X^*} = (\theta_X \ot \theta_{X^*}) c_{X^*,X}c_{X,X^*}$.  
But $\theta_{X \ot X^*} = 1$ by (i), so the statement follows by \cite[Lemma 3.15(ii)]{DGNO2}.
\end{proof}

\begin{lemma}
\label{lem complementary condition}
Let $X,\, Y$ be  simple objects of $\B$ centralizing each other.
Then $X\ot Y$ is an isotropic object of $\B$ if and only if $X,\,Y$ are projectively isotropic
and $\theta_X \theta_Y=1$.
\end{lemma}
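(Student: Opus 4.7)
The plan is to reduce the isotropy of $Z := X \otimes Y$ to two checks---symmetry of $Z$ and triviality of $\theta_Z$---via Lemma~\ref{thetaX=1}(i), and to compute both quantities using the formula $\theta_{A \otimes B} = (\theta_A \otimes \theta_B) c_{B,A} c_{A,B}$ together with a hexagonal decomposition of $c_{Z,Z}^2$.

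For the direction $(\Leftarrow)$, assume $X$ and $Y$ are projectively isotropic simples that centralize each other with $\theta_X \theta_Y = 1$. The centralizing condition gives $c_{Y,X} c_{X,Y} = \id_{X \otimes Y}$, so $\theta_Z = \theta_X \theta_Y \cdot \id_Z = \id_Z$. Next, applying the hexagon axioms to $c_{X \otimes Y,\, X \otimes Y}^2$ and using the centralizing condition to cancel all mixed $X$--$Y$ monodromies, one obtains
\[
c_{Z,Z}^2 \;=\; c_{X,X}^2 \otimes c_{Y,Y}^2
\]
after the canonical identification $X \otimes Y \otimes X \otimes Y \cong X \otimes X \otimes Y \otimes Y$ by a centralizing braiding. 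By Lemma~\ref{thetaX=1}(ii), $c_{X,X}^2 = \theta_X^2\, \id$ and $c_{Y,Y}^2 = \theta_Y^2\, \id$, hence $c_{Z,Z}^2 = (\theta_X \theta_Y)^2\, \id = \id$. Thus $Z$ is symmetric and $\theta_Z = \id_Z$, so Lemma~\ref{thetaX=1}(i) shows $\langle Z \rangle$ is Tannakian, i.e., $Z$ is isotropic.

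For $(\Rightarrow)$, assume $Z$ is isotropic, so $c_{Z,Z}^2 = \id$ and $\theta_Z = \id_Z$. The identity $\theta_Z = \theta_X \theta_Y \cdot \id_Z$ (valid by centralizing) gives $\theta_X \theta_Y = 1$ at once. The same hexagonal decomposition yields $c_{X,X}^2 \otimes c_{Y,Y}^2 = c_{Z,Z}^2 = \id$ in $\End(X \otimes X) \otimes \End(Y \otimes Y)$, and a standard finite-dimensional argument shows that a tensor product of endomorphisms equal to the identity forces each factor to be a scalar multiple of identity; so $X$ and $Y$ are projectively symmetric. To upgrade to projectively isotropic, note that via the centralizing assumption and the canonical inclusion $\mathbf{1} \hookrightarrow Y \otimes Y^*$, the object $X \otimes X^*$ is a direct summand of $Z \otimes Z^* \cong X \otimes X^* \otimes Y \otimes Y^*$. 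Hence $\langle X \otimes X^* \rangle \subseteq \langle Z \rangle$, which is Tannakian; symmetrically for $\langle Y \otimes Y^* \rangle$. This makes both $X$ and $Y$ projectively isotropic.

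The main technical step is the hexagonal identity $c_{Z,Z}^2 = c_{X,X}^2 \otimes c_{Y,Y}^2$---a routine but diagrammatically intricate computation using the hexagon axioms and trivializing the cross $X$--$Y$ braidings by centralization. Pictorially it expresses the fact that the double braid of two parallel $(X,Y)$-strands factors as the product of the double braids of the $X$-sub-strands and of the $Y$-sub-strands, precisely when the $X$- and $Y$-sub-strands centralize one another.
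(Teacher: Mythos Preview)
Your proof is correct and follows essentially the same approach as the paper: both hinge on the hexagonal identity that $c_{X\otimes Y,\,X\otimes Y}^2$ is (a conjugate of) $c_{X,X}^2 \otimes c_{Y,Y}^2$, combined with Lemma~\ref{thetaX=1}. Your argument is in fact more careful than the paper's in one place: in the forward direction the paper passes directly from ``$c_{X,X}^2\otimes c_{Y,Y}^2=\id$'' to ``$X,Y$ projectively isotropic,'' whereas you correctly observe that this only yields projectively \emph{symmetric}, and you supply the missing step by embedding $\langle X\otimes X^*\rangle$ (and $\langle Y\otimes Y^*\rangle$) into the Tannakian category $\langle Z\rangle$. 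One minor point of presentation: in the $(\Rightarrow)$ direction you invoke $\theta_X,\theta_Y$ before establishing that $X,Y\in\B_{int}$; it would be cleaner to run your third step (projective isotropy, hence integrality via Lemma~\ref{proj iso is int}) first, and only then read off $\theta_X\theta_Y=1$ from $\theta_Z=\id_Z$.
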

\begin{proof}
The hexagon axioms of braiding imply that $c_{X\ot Y, X\ot Y}^2$ is  a conjugate of $c_{X,X}^2\ot c_{Y,Y}^2$.
So $X\ot Y$ is symmetric if and only if $X,Y$ are projectively isotropic and the scalars $c_{X,X}^2$ and  $c_{Y,Y}^2$
are reciprocals of each other.  But these scalars are $\theta_X^2$ and $\theta_Y^2$, respectively, so
the result follows from Lemma~\ref{thetaX=1}.
\end{proof}

\begin{definition}
\label{def complementary pair}
Let $\B_1,\B_2$ be braided fusion categories. We will say that two objects  $X_1\in \B_1,\,  X_2\in \B_2$ 
are {\em complementary} to each other if they are projectively isotropic and satisfy $\theta_{X_1}\theta_{X_2}=1$.
We will say that $X_2$ is {\em complementary} to $\B_1$ if it is complementary to some $X_1\in \B_1$ 
and {\em non-complementary} to $\B_1$ otherwise.  We will say that $\B_2$ is {\em non-complementary} to $\B_1$
if every simple object of $\B_2$ other than $\be$ is non-complementary to $\B_1$. If $\B_2$ has a specified
fermion object $\delta$ (e.g., if $\B_2$ is slightly degenerate)  we will say that $\B_2$ is {\em super non-complementary} to $\B_1$ if every simple object of $\B_2$ other than $\be$ and $\delta$ is non-complementary to $\B_1$.
\end{definition}

\begin{corollary}
\label{corad of TG times any}
Let $\B$ be a TG-category and $\C$ be a braided fusion category. Then
\begin{equation}
    \Corad(\B \bt \C) = \B \bt \tilde{\C},
\end{equation}
where $\tilde{\C}$ is the fusion subcategory of $\C$ generated by 
simple objects $Y$ complementary to $\B$.
\end{corollary}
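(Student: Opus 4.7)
The plan is to prove both inclusions, using three elementary facts about the Deligne tensor product $\B \bt \C$: simple objects have the form $X \bt Y$ with $X \in \B$ and $Y \in \C$ simple; the twist factors as $\theta_{X \bt Y} = \theta_X \theta_Y$; and $X \bt \be$ automatically centralizes $\be \bt Y$. Combined with Lemma~\ref{lem complementary condition}, this translates isotropicity of $X \bt Y$ into the complementarity conditions that $X$ and $Y$ are both projectively isotropic and $\theta_X \theta_Y = 1$. For the inclusion $\Corad(\B \bt \C) \subset \B \bt \tilde{\C}$, I would pick an arbitrary Tannakian $\E \subset \B \bt \C$ and a simple $X \bt Y \in \E$. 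Since $X \bt Y$ is isotropic, Lemma~\ref{lem complementary condition} (applied to the trivially centralizing pair $X \bt \be, \be \bt Y$) forces $Y$ to be complementary to $X \in \B$, hence $Y \in \tilde{\C}$ by the defining property of $\tilde{\C}$, so $X \bt Y \in \B \bt \tilde{\C}$. Joining over all such $\E$ yields the inclusion.

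For the reverse inclusion, I would split $\B \bt \tilde{\C}$ as the join of $\B \bt \Vect$ and $\Vect \bt \tilde{\C}$ (using $X \bt Y \cong (X \bt \be)\ot (\be \bt Y)$) and handle each piece separately. The containment $\B \bt \Vect \subset \Corad(\B \bt \C)$ is immediate from the TG hypothesis on $\B$: writing $\B = \bigvee_\E \E$ with $\E \subset \B$ Tannakian, each $\E \bt \Vect$ is Tannakian in $\B \bt \C$, so their join $\B \bt \Vect$ lies in $\Corad(\B \bt \C)$. For $\Vect \bt \tilde{\C} \subset \Corad(\B \bt \C)$, I would verify containment on a generating simple $Y \in \tilde{\C}$ complementary to some simple $X_0 \in \B$. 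Lemma~\ref{lem complementary condition} gives that $X_0 \bt Y$ is isotropic, so $\langle X_0 \bt Y \rangle \subset \Corad(\B \bt \C)$; tensoring by $X_0^* \bt \be \in \B \bt \Vect \subset \Corad(\B \bt \C)$ produces $(X_0^* \ot X_0) \bt Y$ in the coradical, and $\be \bt Y$ appears as a direct summand. Closing under fusion operations then delivers all of $\Vect \bt \tilde{\C}$.

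The main subtlety, rather than a genuine obstacle, is that $\be \bt Y$ is not visibly contained in any single Tannakian subcategory of $\B \bt \C$, so one cannot place it in the coradical directly. One must instead exploit the pass-through through the isotropic object $X_0 \bt Y$, which is only available after $\B \bt \Vect$ has been shown to sit in the coradical. This pass-through is what forces the two-stage structure of the reverse inclusion.
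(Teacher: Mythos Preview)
Your argument is correct and is exactly the direct application of Lemma~\ref{lem complementary condition} that the paper intends; the paper states this result as a corollary without proof, and your two-inclusion approach (with the pass-through trick to place $\be \bt Y$ in the coradical) is the natural way to spell it out.
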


\subsection{A canonical decomposition of a reductive category}
\label{subsect :can decomp}

Recall that an  {\em Ising} braided fusion category  $\mathcal{I}$ is a non-pointed braided fusion category of dimension $4$.  

\begin{definition}
\label{types of decomposition}
Let $\B$ be a reductive braided fusion category.  We say that $\B$ is 
\begin{enumerate}
    \item[] of type I if  $\Corad(\B)$ is non-degenerate, 
    \item[] of type II  if  both $\B$ and $\Corad(\B)$  are slightly degenerate, 
    \item[] of type III if $\B$ is non-degenerate and $\Corad(\B)$ is slightly degenerate. 
\end{enumerate}
By Remarks~\ref{reductive imply almost degenerate} and \ref{basic corad},  any reductive braided fusion category belongs to one of the above types.
\end{definition}

\begin{remark}
A reductive braided fusion category $\B$ of type I is a tensor product of a TG-category $\Corad(\B)$ and 
an anisotropic category $\Corad(\B)'$, see Corollary~\ref{saturated part of primitive}.  On the other hand,
let $\B_1$ be a reductive TG-category, $\B_2$ be an anisotropic category, and consider $\B =\B_1\bt \B_2$.
Then $\B_1 \subseteq \Corad(\B)$, but, in general, $\B_1 \neq \Corad(\B)$.
For example, take $\B_1=\Z(\Rep(\mathbb{Z}/2\mathbb{Z}))$ and $\B_2$ an Ising category. 
Then $\Corad(\B_1\bt \B_2) = \Z(\Rep(\mathbb{Z}/2\mathbb{Z})) \bt \sVect$, cf.\ Corollary~\ref{corad of TG times any}.
In Theorem~\ref{canonical decomposition} we will provide a decomposition of $\B$ in terms of its coradical and anisotropic parts.
\end{remark}

Recall that a slightly degenerate braided fusion category $\B$ admits a minimal non-degenerate extension  \cite{JFR}.
Such an extension is faithfully  $\mathbb{Z}/2\mathbb{Z}$-graded with $\B$ being the trivial component.  There are $16$ 
braided equivalence classes of such extensions, see,  e.g., \cite[Section 5.3]{DNO}

A slightly degenerate braided fusion category $\B$ is {\em split} if  $\B\cong \tilde{\B} \bt \sVect$,
where $\tilde{\B}$ is non-degenerate (the braided equivalence class of $\tilde{\B}$ is an invariant of $\B$).
Otherwise, we say that $\B$ is  {\em non-split}.

\begin{theorem}
\label{canonical decomposition}  
Let $\B$ be a reductive braided fusion category.
\begin{enumerate}
\item[(1)] $\B$ is of type I if and only if $\B\cong \B_1 \bt \B_2$, where $\B_1$ is a non-degenerate reductive TG-category and $\B_2$ is an anisotropic category non-complementary to $\B_1$.
In this case, $\B_1\cong \Corad(\B)$ and $\B_2\cong \Corad(\B)'$.
\item[(2)] $\B$ is of type II if and only if $\B\cong \B_1 \bt_{\sVect} \B_2$, where $\B_1$ is a reductive slightly degenerate TG-category and $\B_2$ is a slightly degenerate anisotropic category  super non-complementary to $\B_1$.
In this case, $\B_1\cong \Corad(\B)$ and $\B_2\cong \Corad(\B)'$.
\item[(3)]  $\B$ is of type III if and only if it is  a minimal non-degenerate extension
of a type II reductive category $\B_0$ such that  either $\B_0$ is non-split, or $\B_0\cong \tilde{\B}_0 \bt \sVect$ 
is split  and $\B \cong \tilde{\B}_0 \bt \C$, where $\C$ is a  minimal non-degenerate extension of $\sVect$ super 
non-complementary to $\B_0$. In this case, $\B_0 =\Corad(\B)\vee \Corad(\B)'$.
\end{enumerate}
\end{theorem}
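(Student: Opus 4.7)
The plan is to handle each of the three types separately, using Müger's centralizer decomposition of $\B$ along the coradical $\Corad(\B)$ and its centralizer $\Corad(\B)'$, which is anisotropic by Corollary~\ref{saturated part of primitive}. The non-complementarity conditions will emerge from combining Lemma~\ref{lem complementary condition} with Corollary~\ref{corad of TG times any}, yielding both the decomposition and its converse in each case.

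For part~(1), I would invoke Müger's decomposition for the non-degenerate $\Corad(\B)$ to get $\B \cong \Corad(\B) \bt \Corad(\B)'$, set $\B_1 = \Corad(\B)$ (reductive TG by Remark~\ref{basic corad}), and $\B_2 = \Corad(\B)'$ (anisotropic by Corollary~\ref{saturated part of primitive}). If some simple object $Y \in \B_2$ were complementary to some $X \in \B_1$, then by Lemma~\ref{lem complementary condition} the object $X \ot Y$ would be isotropic and generate a Tannakian subcategory not contained in $\Corad(\B)$, contradicting maximality; hence $\B_2$ is non-complementary to $\B_1$. Conversely, given such $\B_1,\B_2$, Corollary~\ref{corad of TG times any} forces $\Corad(\B_1 \bt \B_2) = \B_1$. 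Part~(2) proceeds analogously using the slightly degenerate variant of Müger's decomposition: since $\Corad(\B) \cap \Corad(\B)' = \Z_{sym}(\Corad(\B)) = \sVect$ and both subcategories contain $\sVect$, one obtains $\B \cong \Corad(\B) \bt_{\sVect} \Corad(\B)'$. The super non-complementarity condition enters because the fermion $\delta$ is automatically self-complementary (symmetric with $\theta_\delta^2 = 1$), but its two copies are already identified across the fiber product $\bt_{\sVect}$, so it contributes no new maximal Tannakian subcategory.

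For part~(3), set $\B_0 := \Corad(\B) \vee \Corad(\B)'$. Using that $\B$ is non-degenerate and $\Corad(\B)$ slightly degenerate, one computes $\B_0' = \Corad(\B)' \cap \Corad(\B) = \Z_{sym}(\Corad(\B)) = \sVect$, so $\B_0$ is a slightly degenerate subcategory of index $2$ in $\B$. Every Tannakian subcategory of $\B_0$ is Tannakian in $\B$ and therefore contained in $\Corad(\B) \subset \B_0$, which gives $\Rad(\B_0) = \Rad(\Corad(\B)) = \Vect$ and $\Corad(\B_0) = \Corad(\B)$; thus $\B_0$ is a type~II reductive category. Consequently $\B$ is a minimal non-degenerate extension of $\B_0$ in the sense of \cite{JFR}. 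The split/non-split dichotomy arises because when $\B_0 \cong \tilde\B_0 \bt \sVect$ is split, the $\mathbb{Z}/2\mathbb{Z}$-grading of the extension can be decoupled from $\tilde\B_0$, yielding $\B \cong \tilde\B_0 \bt \C$ where $\C$ is a minimal non-degenerate extension of $\sVect$. Super non-complementarity of $\C$ to $\B_0$ is then required to prevent the emergence of new Tannakian subcategories.

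The hardest step will be the converse of part~(3): showing that every minimal non-degenerate extension $\B$ of a type~II reductive $\B_0$ satisfying the stated split/non-split structure is actually reductive with $\Corad(\B) = \Corad(\B_0)$ slightly degenerate. This requires a careful analysis of potentially isotropic simple objects in the nontrivial graded component $\B \setminus \B_0$, using super non-complementarity to rule out new isotropic tensor products produced by combining objects across $\tilde\B_0$ and $\C$, together with the list of $16$ minimal non-degenerate extension classes from \cite[Section 5.3]{DNO} to ensure the split/non-split case analysis is exhaustive.
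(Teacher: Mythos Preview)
Your approach to parts~(1), (2), and the forward direction of~(3) is essentially identical to the paper's: M\"uger decomposition (or its slightly degenerate analogue from \cite[Proposition~4.3]{DNO}) to split off $\Corad(\B)$, then Lemma~\ref{lem complementary condition} and Corollary~\ref{corad of TG times any} to pin down the (super) non-complementarity conditions.

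Where you diverge is the converse of~(3). You flag it as the hardest step and propose to attack it by classifying all isotropic simple objects in the nontrivial graded component $\B\setminus\B_0$, invoking the list of $16$ minimal extension classes from \cite{DNO}. This would work, but the paper's argument is much shorter and avoids any case-by-case analysis. The key observation you are missing is that $\Rad(\B)\cap\B_0\subseteq\Rad(\B_0)=\Vect$ (since every maximal Tannakian of $\B_0$ is the intersection with $\B_0$ of some maximal Tannakian of $\B$), so $\Rad(\B)$ is a $\mathbb{Z}/2\mathbb{Z}$-graded extension of $\Vect$ and hence either $\Vect$ or $\Rep(\mathbb{Z}/2\mathbb{Z})$. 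The second possibility requires an \emph{invertible} object in the nontrivial component, which by \cite{JFR} forces $\B_0$ to be split; in that split case one has $\B\cong\tilde\B_0\bt\C$ and Corollary~\ref{corad of TG times any} immediately reduces the question $\Corad(\B)=\Corad(\B_0)$ to super non-complementarity of $\C$. No enumeration of the $16$ extension classes is needed.

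So your plan is correct but overcomplicates the last step; the paper's $\Rad(\B)\leq 2$-dimensional trick is the cleaner route.
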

\begin{proof}
(1) Since $\Corad(\B)$ is non-degenerate, we have $\B\cong \Corad(\B) \bt \Corad(\B)'$.  
It follows from  Lemma~\ref{lem complementary condition} that  $\Corad(\B)'$ must be non-complementary to 
$\Corad(\B)$. Conversely, given categories $\B_1$ and $\B_2$
satisfying the hypothesis, $\Corad(\B_1\bt \B_2)=\B_1$ by 
Corollary~\ref{corad of TG times any} and $\Rad(\B_1\bt \B_2) = \Rad(\B_1)=\Vect$.

(2) We have $\B\cong \Corad(\B) \bt_{\sVect} \Corad(\B)'$ by \cite[Proposition 4.3]{DNO}.
Hence, every simple object of $\B$ is isomorphic to $X\bt Y$, where $X\in \Corad(\B)$
and $Y\in \Corad(\B)'$, so the argument from part (1) applies  here as well.

(3) When $\B$ is of type III,  $\B_0=\Corad(\B) \vee  \Corad(\B)'$ is a type II reductive 
slightly degenerate  subcategory of $\B$ of index $2$,  so $\B$ is a minimal extension of $\B_0$.  
In the opposite direction, let us determine when a minimal extension $\B$ of a given 
type II reductive category $\B_0$ satisfies $\Corad(\B)=\Corad(\B_0)$ (in which case
$\B$ is automatically reductive).  Note that $\Rad(\B)$ is a (possibly
trivial) $\mathbb{Z}/2\mathbb{Z}$-graded extension of $\Rad(\B_0)=\Vect$, so either
$\Rad(\B) = \Vect$ (and $\B$ is reductive) or $\Rad(\B) =\Rep(\mathbb{Z}/2\mathbb{Z})$. 
The latter situation is only possible when the non-trivial homogeneous component of $\B$ contains 
an invertible object, i.e., when $\B_0\cong \tilde{\B}_0 \bt \sVect$ \cite{JFR}. 
In this case $\B= \tilde{\B}_0 \bt \C$, where $\C$ is a minimal extension of $\sVect$. 
By Corollary~\ref{corad of TG times any}, $\Corad(\B)=\Corad(\B_0)$ if and only if
$\C$ is super non-complementary to $\B$.
\end{proof}

\begin{remark}
\label{regarding factor C}
The direct factor $\C$ that appears in the split case  of Theorem~\ref{canonical decomposition}(3)
must be a non-degenerate anisotropic braided fusion category of the Frobenius-Perron dimension $4$ with exactly one fermion. There are $14$ such categories: $8$ Ising categories and $6$ pointed categories corresponding to metric groups
in lines 3 and 4 of Table~\ref{table-2}. Note that Ising categories are super non-complementary to any $\B_0$,
while for the pointed ones this property depends on the choice of $\B_0$.
\end{remark}


\section{Reductive pointed braided fusion categories}
\label{Sect reductive pointed}

Recall that pointed braided fusion categories correspond to pre-metric groups, i.e., pairs
$(A,\, q)$, where $q:A\to k^\times$ is a quadratic form.  These categories are denoted $\C(A,\, q)$.
The goal of this Section is to classify pre-metric groups $(A,\, q)$ such that $\C(A,\, q)$ is reductive. 

\subsection{Isotropically generated metric groups}

We will say that a pre-metric group $(A,\, q)$ is  {\em isotropically generated} if 
$A$ is  generated by its {\em isotropic elements}, i.e., by those $x\in A$  for which $q(x)=1$.
The set of such elements is called the {\em light cone} in the literature.

Clearly, $(A,\,q)$ is isotropically generated if and only if $\C(A,\, q)$ is  a TG-category.

\begin{example}
\label{ex hyperbolic metric group}
For any finite Abelian group $A$ let 
\begin{equation}
\label{hyperbolic form h}    
h: A \oplus \widehat{A} \to k^\times, \qquad h(a,\, \phi) =\langle \phi,\, a \rangle,\quad a\in A,\, \phi \in \widehat{A},
\end{equation}
denote the canonical hyperbolic quadratic form on $A$. The {\em hyperbolic} metric group 
$(A \oplus \widehat{A},\, h)$ is isotropically generated since $A$ and $\widehat{A}$ 
are isotropic subgroups of $A \oplus \widehat{A}$.  
\end{example}

\begin{theorem}
\label{pointed saturated}
Let $(A,\,q)$ be a metric group and let $n$ be the exponent of $A$. 
Then $(A,\,q)$ is  isotropically generated if and only if 
the following two conditions are satisfied:
\begin{enumerate}
\item[(i)]  $q^n=1$, and 
\item[(ii)] $(A,\,q)$ contains a hyperbolic metric subgroup $(\mathbb{Z}/n\mathbb{Z}\times \mathbb{Z}/n\mathbb{Z},\, h)$.
\end{enumerate}
\end{theorem}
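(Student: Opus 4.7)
Write $b(x,y) = q(x+y)q(x)^{-1}q(y)^{-1}$ for the symmetric bilinear form associated to $q$; recall $q(kx)=q(x)^{k^2}$. My plan is to handle the two implications separately, reducing the harder direction to the $p$-primary case.

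For the implication (i)\&(ii)$\Rightarrow$ isotropic generation, let $H=\langle e_1,e_2\rangle\cong(\mathbb{Z}/n\mathbb{Z})^2$ be the hyperbolic subgroup from (ii), so $q(e_1)=q(e_2)=1$ and $b(e_1,e_2)=\zeta$ for a primitive $n$-th root $\zeta$. Both $e_1,e_2$ are isotropic and lie in the subgroup $I\subseteq A$ generated by isotropic elements, so $H\subseteq I$. For arbitrary $a\in A$, using (i) write $q(a)=\zeta^r$; since $a$ has order dividing $n$ and $e_i$ have order $n$, we also have $b(a,e_1)=\zeta^s$, $b(a,e_2)=\zeta^t$. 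A direct computation gives
\[
q(a+m_1e_1+m_2e_2) \;=\; \zeta^{\,r+sm_1+tm_2+m_1m_2}.
\]
Choosing $m_1=1-t$ and $m_2=st-s-r$ makes the exponent vanish modulo $n$, so $a+m_1e_1+m_2e_2$ is isotropic, hence $a\in I+H\subseteq I$.

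For the converse, (i) is easy: if $A=\langle x_1,\dots,x_k\rangle$ with each $x_i$ isotropic of order dividing $n$, then for any $a=\sum m_ix_i$ one has $q(a)=\prod_{i<j}b(x_i,x_j)^{m_im_j}$, and each $b(x_i,x_j)$ has order dividing $n$, so $q(a)^n=1$. For (ii), I would reduce to $p$-groups: the primary decomposition $A=\bigoplus_p A_p$ is orthogonal for $b$, and multiplication by $m_p:=n/p^{e_p}$ carries isotropic generators of $A$ to isotropic generators of $A_p$ (since $q(m_px_i)=q(x_i)^{m_p^2}=1$ and $m_pA=A_p$), so each $(A_p,q_p)$ is an isotropically generated metric $p$-group of exponent $p^{e_p}$. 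Given the existence of the hyperbolic $(\mathbb{Z}/p^{e_p}\mathbb{Z})^2$ inside each $A_p$, I then reassemble these via CRT into a hyperbolic $(\mathbb{Z}/n\mathbb{Z} \times \mathbb{Z}/n\mathbb{Z},h)\subset A$.

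The main step, and what I expect to be the only real obstacle, is producing the hyperbolic $(\mathbb{Z}/p^{e_p}\mathbb{Z})^2$ inside an isotropically generated metric $p$-group $(B,q)$ of exponent $p^m$. First, since the isotropic generators must achieve the full exponent, there is an isotropic $x_1$ of order $p^m$. The character $b(x_1,-)\colon B\to k^\times$ has order $p^m$ by non-degeneracy, so its image is all of $\mu_{p^m}$. Writing $I$ for the subgroup generated by isotropic elements, $b(x_1,I)$ equals $\mu_{p^m}$ and is generated by the elements $b(x_1,y)$ for $y$ isotropic; since $\mu_{p^m}$ is cyclic of prime-power order, some isotropic $y$ must satisfy $\operatorname{ord}(b(x_1,y))=p^m$. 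Necessarily $\operatorname{ord}(y)=p^m$ as well. A short check using $b(x_1,x_1)=1$ shows the map $(a,c)\mapsto ax_1+cy$ is injective on $(\mathbb{Z}/p^m\mathbb{Z})^2$, and then
\[
q(ax_1+cy) \;=\; q(x_1)^{a^2}q(y)^{c^2}b(x_1,y)^{ac} \;=\; b(x_1,y)^{ac},
\]
identifying $\langle x_1,y\rangle$ with the hyperbolic metric group of the required exponent, which completes (ii).
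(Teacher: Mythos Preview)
Your argument is correct. The overall architecture matches the paper's: reduce to $p$-groups, locate an isotropic element of maximal order, and pair it with a partner whose bilinear pairing is a primitive root. The difference lies in the details on each side.

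For (i)\&(ii) $\Rightarrow$ isotropic generation, the paper first splits off the hyperbolic summand orthogonally, $(A,q)\cong(H,h)\oplus(A_1,q_1)$, and then invokes the categorical Corollary~\ref{corad of TG times any} (every element of $A_1$ is complementary to $H$ because $h$ hits all $n$-th roots of unity). Your direct computation---shifting an arbitrary $a$ into the light cone by an explicit element $m_1e_1+m_2e_2$ of $H$---is more self-contained and avoids both the orthogonal splitting and the categorical language; it is a cleaner argument for this elementary statement.

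For isotropic generation $\Rightarrow$ (ii), the paper and you agree on finding an isotropic $a$ (your $x_1$) of maximal order. You then use the fact that isotropic elements generate $B$ to find an \emph{isotropic} $y$ with $b(x_1,y)$ primitive directly, since a cyclic $p$-group generated by a set must be generated by one of its elements. The paper instead takes any $c'$ with $B(a,c')$ primitive (from non-degeneracy alone) and then \emph{corrects} it to be isotropic by replacing $c'$ with $c=ka+c'$ for suitable $k$, using the already-proved (i) to know $q(c')\in\mu_n$. Both work; your route leans on isotropic generation a second time, while the paper's leans on (i).
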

\begin{proof}
We may assume that $A$ is a $p$-group. Let
\[
B(x,\,y) =\frac{q(x+y)}{q(x)q(y)},\qquad x,y\in A,
\]
be the non-degenerate symmetric bilinear form on $A$ associated to $q$.

Suppose that $(A,\,q)$ is isotropically generated. 
Every $x\in A$ can be written as  $x=x_1+\cdots +x_k$
with $q(x_i) =1,\, i=1,\dots k$. Therefore, $q(x) = \prod_{1 \leq i<j\leq k}\, B(x_i,\, x_j)$.  But each factor $B(x_i,\, x_j)$ is an $n$th root of $1$ and so $q(x)^n=1$,
proving (i).

Note that $A$ must contain an isotropic element $a$ of order $n$. Indeed, isotropic elements of $A$ of order  less than $n$ generate
a subgroup of exponent less than $n$, i.e., a proper subgroup of $A$. We claim that there is $c\in A$  such that 
\begin{equation}
\label{q=1 and Bac primitive}
\text{$q(c)=1$ and $B(a,\,c)$ is a primitive $n$th root of $1$.}
\end{equation}
To see this, let $n_1=n/p$ and suppose that  $B(a,x)^{n_1}=1$ for all $x\in A$. We have
$B(n_{1}a,\,x)  =  B(a,\,x)^{n_1}   =1$, which contradicts the non-degeneracy of $B$. 
So there is $c'\in A$ such that $B(a,\,c')$  is a primitive $n$th root of $1$. Hence,  the order of $c'$ in $A$ 
is also $n$.
Let us choose $k\in \mathbb{Z}$ such that $B(a, c')^k \, q(c')=1$ and set $c=ka+c'$. Then $c$
satisfies \eqref{q=1 and Bac primitive} and 
the metric group $(\langle a,\, c \rangle, \, q|_{\langle a,\, c \rangle})$ is isomorphic to
$(\mathbb{Z}_n\times \mathbb{Z}_n,\, h)$.  This proves (ii).

Now suppose that $(A,\,q)$ satisfies conditions (i) and (ii). Then 
\[
(A,\,q) \cong (H,\, h) \oplus (A_1,\, q_1),
\]
where  $(H,\, h) := (\mathbb{Z}_n\times \mathbb{Z}_n,\, h)$ and $(A_1,\, q_1)$ is  metric group such that $q_1^n=1$.
To prove that $(A,q)$ is isotropically generated, note that
the image of $h$ consists of all $n$th roots of unity. Therefore, $(A_1,\, q_1)$ is complementary to
$(H,\, h)$ in the sense of Definition~\ref{def complementary pair} (here we use the same terminology for metric groups and braided fusion categories).  Hence, $(A,\,q)$ is isotropically generated by Corollary~\ref{corad of TG times any}.
\end{proof}

\begin{corollary}
\label{simple form of isogen}
Let $p$ be an odd prime. Any isotropically generated metric $p$-group $(A,\,q)$ of exponent $p^e$ can be written as
\[
(A,\,q) = (\mathbb{Z}/p^e\mathbb{Z}\times \mathbb{Z}/p^e\mathbb{Z},\, h) \,\bigoplus\,
\left\{ \text{sum of metric $p$-groups of exponent $\leq p^e$}  \right\}.
\]
Any isotropically generated metric $2$-group $(A,\,q)$ of exponent $2^e$ can be written as
\begin{eqnarray*}
(A,\,q) &=& \lefteqn{ (\mathbb{Z}/2^e\mathbb{Z}\times \mathbb{Z}/2^e\mathbb{Z},\, h)^{\oplus n}  
\,\bigoplus\, (\mathbb{Z}/2^e\mathbb{Z}\times \mathbb{Z}/2^e\mathbb{Z},\, f)^{\oplus \epsilon}   }\\
& & \,\bigoplus\, \left\{ \text{sum of metric $2$-groups of exponent $< 2^e$}  \right\}, 
\qquad n \geq 1,\,\epsilon\in \{0,\,1\},
\end{eqnarray*}
where the forms $h$ and $f$ were defined in \eqref{hzeta} and \eqref{fzeta}.
\end{corollary}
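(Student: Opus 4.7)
For odd $p$ the result is an immediate consequence of Theorem~\ref{pointed saturated}. Condition (ii) of that theorem produces an orthogonal splitting $(A,q)=(\mathbb{Z}/p^e\mathbb{Z}\times\mathbb{Z}/p^e\mathbb{Z},\,h)\oplus(A',q')$, and condition (i) transported to the complement gives $(q')^{p^e}=1$. Since $B'(x,p^e y)=B'(x,y)^{p^e}=1$ for every $x\in A'$, non-degeneracy of $B'$ forces $p^e y=0$ for every $y\in A'$, so $A'$ has exponent $\le p^e$, as desired.

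For $p=2$ I would apply Theorem~\ref{pointed saturated} once to obtain $(A,q)=(\mathbb{Z}/2^e\mathbb{Z}\times\mathbb{Z}/2^e\mathbb{Z},\,h)\oplus(A_1,q_1)$ with $q_1^{2^e}=1$, and then iterate the following key step: if a metric $2$-group $(G,q_G)$ with $q_G^{2^e}=1$ contains an element of order $2^e$, it has an orthogonal summand isomorphic to $(\mathbb{Z}/2^e\mathbb{Z}\times\mathbb{Z}/2^e\mathbb{Z},\,h)$ or $(\mathbb{Z}/2^e\mathbb{Z}\times\mathbb{Z}/2^e\mathbb{Z},\,f)$. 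To prove this, pick $a\in G$ of order $2^e$; because $B_G^{2^e}=1$ and $B_G$ is non-degenerate, the image of $B_G(a,\cdot)\colon G\to\mu_\infty$ is cyclic of order equal to the order of $a$, i.e., all of $\mu_{2^e}$, so there is $c\in G$ with $B_G(a,c)=\zeta$ a primitive $2^e$-th root of unity. The arithmetic observation $B_G(a,a)=q_G(a)^2\in\mu_{2^{e-1}}$ (one power of $2$ smaller than the ambient $\mu_{2^e}$), together with the analogous statement for $c$, then yields two things. First, any relation $ia+jc=0$, paired against $a$ and against $c$, produces a $2\times 2$ linear system in $\mathbb{Z}/2^e\mathbb{Z}$ with odd determinant, forcing $(i,j)\equiv(0,0)\bmod 2^e$; hence $\langle a,c\rangle\cong(\mathbb{Z}/2^e\mathbb{Z})^2$. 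Second, the Gram matrix of $B_G|_{\langle a,c\rangle}$ in the basis $(a,c)$ has odd (hence invertible in $\mathbb{Z}/2^e\mathbb{Z}$) determinant, so this metric subgroup is non-degenerate; Lemma~\ref{qn=1 homogeneous} then identifies it with one of $h,f$, and non-degeneracy supplies the orthogonal splitting.

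Iterating the key step on $(A_1,q_1)$ and its successive orthogonal complements---each step strictly decreasing the order---produces a decomposition of $(A,q)$ as $(\mathbb{Z}/2^e\mathbb{Z}\times\mathbb{Z}/2^e\mathbb{Z},\,h)^{\oplus n'}\oplus(\mathbb{Z}/2^e\mathbb{Z}\times\mathbb{Z}/2^e\mathbb{Z},\,f)^{\oplus k}$ orthogonally summed with a metric $2$-group of exponent $<2^e$, and with $n'\ge 1$ coming from the first application of Theorem~\ref{pointed saturated}. Finally, the relation \eqref{EF-relation} converts $f$-summands two at a time into $h$-summands, leaving $\epsilon\in\{0,1\}$ and giving the asserted form. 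The principal obstacle is establishing the key step; the ``loss of one power of $2$'' in $B_G(a,a)=q_G(a)^2$ is the single fact that simultaneously forces $\langle a,c\rangle\cong(\mathbb{Z}/2^e\mathbb{Z})^2$ and makes the Gram matrix non-degenerate.
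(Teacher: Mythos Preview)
Your proof is correct. For odd $p$ it matches the paper's one-line argument, though the exponent bound you derive via non-degeneracy is superfluous: $A'$ is a subgroup of $A$, so its exponent divides $p^e$ automatically.

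For $p=2$ you take a genuinely different route. The paper's proof is a single sentence invoking Lemma~\ref{qn=1 homogeneous}; implicitly it relies on the Wall--Kawauchi--Kojima classification of $M_2$ recalled in Section~\ref{sect metric groups} to decompose $(A,q)$ into the standard generators, after which condition~(i) of Theorem~\ref{pointed saturated} kills the $q_\xi$-summands at the top level $e$ (the content of Lemma~\ref{qn=1 homogeneous}), condition~(ii) supplies an $h$-summand, and relation~\eqref{EF-relation} reduces the $f$-count to $\epsilon\in\{0,1\}$. Your argument instead \emph{proves} the existence of a rank-$2$ non-degenerate summand at level $e$ from scratch, via the odd-determinant Gram-matrix computation, and only invokes Lemma~\ref{qn=1 homogeneous} in the trivial rank-$2$ case. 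This makes your proof self-contained---it does not lean on the external classification of metric $2$-groups---at the cost of being longer. Both approaches finish with the same appeal to~\eqref{EF-relation}.
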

\begin{proof}
Condition (i) of Theorem~\ref{pointed saturated}   is automatically satisfied for any odd $p$,
which implies the first decomposition. The second one follows by Lemma~\ref{qn=1 homogeneous}.
\end{proof}

\begin{corollary}
\label{isogen solutions qx=1}
Let $(A,\,q)$ be an isotropically generated metric group and let $n$ be the exponent of $A$.
For any $n$th root of unity $\zeta\in k^\times$ there is $x\in A$ such that $q(x)=\zeta$.
\end{corollary}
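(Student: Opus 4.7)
The plan is to extract the statement directly from Theorem~\ref{pointed saturated}(ii), without doing any independent computation. First I would invoke that theorem to obtain a hyperbolic metric subgroup $(\mathbb{Z}/n\mathbb{Z}\times \mathbb{Z}/n\mathbb{Z},\, h) \hookrightarrow (A,q)$. Upon identifying $\widehat{\mathbb{Z}/n\mathbb{Z}}$ with $\mathbb{Z}/n\mathbb{Z}$ via a choice of primitive $n$th root of unity $\zeta_0$, the hyperbolic form pulls back to $h(x,y) = \zeta_0^{xy}$, and the restriction of $q$ to this subgroup agrees with $h$.

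Next, given an arbitrary $n$th root of unity $\zeta$, I would write $\zeta = \zeta_0^k$ for some integer $k$ and evaluate $q$ on the element corresponding to $(1,k)$ in the hyperbolic subgroup: this yields $q(1,k) = \zeta_0^{1\cdot k} = \zeta$, furnishing the required $x \in A$. I do not anticipate any genuine obstacle — the corollary is essentially a one-line consequence of part (ii) of Theorem~\ref{pointed saturated}. The only subtlety worth flagging is the choice of coordinate: picking $x = 1$ (a generator of $\mathbb{Z}/n\mathbb{Z}$) guarantees that as the second coordinate ranges over $\mathbb{Z}/n\mathbb{Z}$, the values $h(1,y) = \zeta_0^y$ already exhaust all of $\mu_n$, so no appeal to the complementary summand in the orthogonal decomposition is needed.
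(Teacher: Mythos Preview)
Your proposal is correct and follows the same approach as the paper: both invoke Theorem~\ref{pointed saturated}(ii) to obtain the hyperbolic summand $(\mathbb{Z}/n\mathbb{Z}\times \mathbb{Z}/n\mathbb{Z},\, h)$ and observe that the values of $h$ already exhaust the $n$th roots of unity. The paper's proof is simply terser, asserting that this ``is easily seen to be true for the hyperbolic group,'' while you have spelled out the computation $q(1,k)=h(1,k)=\zeta_0^{k}$ explicitly.
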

\begin{proof}
This is easily seen to be true for the hyperbolic group    
$(\mathbb{Z}/p^n\mathbb{Z}\times \mathbb{Z}/p^n\mathbb{Z},\, h)$, so the result follows
from Theorem~\ref{pointed saturated}.
\end{proof}

\begin{lemma}
\label{coradical absorbtion}
Let $(A,\,q)$ be an isotropically generated  metric group and  let $n$ be the exponent of $A$.
Let $(B,\,r)$ be a  pre-metric group. Define $\tilde{B}=\langle y\in B \mid  r(y)^n =1 \rangle$.
Then $A\oplus \tilde{B}$ is an isotropically generated subgroup of $(A \oplus B,\, q\oplus r)$. 
\end{lemma}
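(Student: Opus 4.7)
The plan is to reduce to checking that a generating set of $A \oplus \tilde{B}$ consists of sums of isotropic elements of $(A \oplus \tilde{B},\, q \oplus r)$. Since $(A,\,q)$ is isotropically generated, its isotropic elements $a$ (together with the generators of $\tilde{B}$) provide a natural generating set: the elements $(a,\,0)$ with $q(a)=1$ and the elements $(0,\,y)$ with $y \in B$ such that $r(y)^n=1$. The first type is already isotropic in $A \oplus \tilde{B}$, so the only substantive task is to write each $(0,\,y)$ with $r(y)^n=1$ as a sum of isotropic elements of $A \oplus \tilde{B}$.

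For the second type, the key input is Corollary~\ref{isogen solutions qx=1}: since $r(y)^{-1}$ is an $n$th root of unity and $n$ is the exponent of $A$, there exists $x \in A$ with $q(x) = r(y)^{-1}$. Then $(x,\,y)$ satisfies $(q\oplus r)(x,\,y) = q(x)r(y) = 1$, so it is isotropic in $A \oplus B$. Moreover $(x,\,y) \in A \oplus \tilde{B}$ because $y \in \tilde{B}$ by hypothesis. Using that $(A,\,q)$ is isotropically generated, write $-x = \sum_i a_i$ with each $a_i$ isotropic in $A$; then $(-x,\,0) = \sum_i (a_i,\,0)$ is a sum of isotropic elements of $A \oplus \tilde{B}$. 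Adding gives the desired expression
\[
(0,\,y) \;=\; (x,\,y) + \sum_i (a_i,\,0),
\]
a sum of isotropic elements of $A \oplus \tilde{B}$.

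The one subtle point worth checking (and the main place where the definition of $\tilde{B}$ matters) is that the constructed isotropic element $(x,\,y)$ really belongs to $A \oplus \tilde{B}$ rather than just $A \oplus B$. This follows automatically from $y \in \tilde{B}$; the role of Theorem~\ref{pointed saturated}(i), which guarantees $q^n=1$, is to ensure conversely that if $(x',\,y')$ is isotropic in $A \oplus B$ with $x' \in A$ then $r(y') = q(x')^{-1}$ is automatically an $n$th root of unity, confirming that the relevant isotropic elements of $A \oplus B$ all land in $A \oplus \tilde{B}$. I do not expect any real obstacle here; the argument is essentially a direct application of Corollary~\ref{isogen solutions qx=1} combined with the isotropic generation hypothesis on $(A,\,q)$.
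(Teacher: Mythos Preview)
Your proposal is correct and takes essentially the same approach as the paper: both arguments use Corollary~\ref{isogen solutions qx=1} to find, for each $y$ with $r(y)^n=1$, an element $x\in A$ with $q(x)r(y)=1$, and then combine the resulting isotropic element $(x,y)$ with the isotropic generation of $A$. Your version is slightly more careful in explicitly working with the generators of $\tilde{B}$ rather than arbitrary elements (the paper's phrase ``for any $y\in\tilde{B}$'' tacitly relies on the same reduction), and your final remark about Theorem~\ref{pointed saturated}(i) is a correct observation but not needed for the lemma as stated.
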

\begin{proof}
It follows from Corollary~\ref{isogen solutions qx=1} that for any $y\in \tilde{B}$ 
there is $x\in A$ such that $q(x)r(y)=1$ and so $(x,y)$ is an isotropic element of $A\oplus B$.
Therefore, $A\oplus \tilde{B}$ is isotropic.
\end{proof}



\subsection{Reductive metric $p$-groups.}
\label{subsect: Reductive metric $p$-groups}

\begin{definition}
A pre-metric group is {\em reductive} if the intersection of its maximal isotropic subgroups
is trivial.
\end{definition}

Equivalently, a pre-metric group $(A,\,q)$ is reductive if and only if the braided fusion category $\C(A,\,q)$ is reductive.

\begin{proposition}
\label{odd dichotomy}
Let $p$ be an odd prime. A metric $p$-group $(A,\,q)$ is reductive if and only it is either isotropically generated or
anisotropic.
\end{proposition}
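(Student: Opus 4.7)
The plan is to prove both directions using the observation, specific to $p$ odd, that a subgroup $L \subset A$ is isotropic (i.e., $q|_L \equiv 1$) if and only if the associated bilinear form $B(x,y) := q(x+y)/\bigl(q(x)q(y)\bigr)$ satisfies $B|_{L\times L} \equiv 1$. This uses only $B(x,x) = q(x)^2$ and the fact that a $p$-power root of unity squaring to $1$ must equal $1$ when $p$ is odd.

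The easy direction ($\Leftarrow$) is quick. If $(A,q)$ is anisotropic, the only isotropic subgroup is $\{0\}$, so the intersection of maximal isotropic subgroups is trivial. If $(A,q)$ is isotropically generated, then $\C(A,q)$ is a pointed TG-category and in particular non-degenerate, hence reductive by Proposition~\ref{TG properties}(i).

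For the hard direction ($\Rightarrow$), assume $(A,q)$ is reductive and \emph{not} anisotropic. Set $I := I(A)$, the subgroup generated by isotropic elements; by hypothesis $I \neq 0$. I would identify $\Corad(\C(A,q)) = \C(I, q|_I)$, so its centralizer in $\C(A,q)$ is $\C(I^{\perp}, q|_{I^\perp})$. By Corollary~\ref{saturated part of primitive} the centralizer is anisotropic, meaning $I^\perp$ has no nonzero isotropic elements. The crucial consequence is that $I \cap I^\perp = 0$: any $z$ in it satisfies $B(z,z) = 1$ (since $z \in I$ and $z \in I^\perp$), so by the $p$-odd equivalence above $q(z) = 1$; as $z \in I^\perp$ is then isotropic, $z = 0$. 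Non-degeneracy of $B$ on $A$ now yields the orthogonal decomposition $(A,q) = (I, q|_I) \oplus (I^\perp, q|_{I^\perp})$.

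To conclude, suppose for contradiction $I^\perp \neq 0$ and pick $y \in I^\perp \setminus \{0\}$. By Table~\ref{table-1}, every anisotropic metric $p$-group (for odd $p$) has exponent $p$, so $q(y) \in \mu_p$. Because $(I, q|_I)$ is isotropically generated and $p \mid \exp(I)$, Corollary~\ref{isogen solutions qx=1} produces $x \in I$ with $q(x) = q(y)^{-1}$. Orthogonality of the decomposition then gives $q(x+y) = q(x)q(y) = 1$, so $x+y$ is isotropic and lies in $I$; subtracting $x$ yields $y \in I \cap I^\perp = 0$, contradicting $y \neq 0$. Thus $I^\perp = 0$, so $I = A$ and $(A,q)$ is isotropically generated. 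The main obstacle is the matching step at the end: the exponent-$p$ bound on anisotropic metric $p$-groups (Table~\ref{table-1}) is what lets us produce $x \in I$ with $q(x) = q(y)^{-1}$, and without it the dichotomy would fail.
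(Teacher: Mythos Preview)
Your proof is correct and follows essentially the same route as the paper: decompose $(A,q)$ as the coradical plus its centralizer, use that the anisotropic complement has exponent $p$ (Table~\ref{table-1}), and match values of $q$ to produce an isotropic element forcing the complement to vanish. The only real difference is that the paper obtains the orthogonal splitting by invoking the type~I case of Theorem~\ref{canonical decomposition} (which packages the non-degeneracy of $\Corad(\B)$ and the non-complementarity of $\Corad(\B)'$), whereas you derive $I\cap I^\perp=0$ directly from the $p$-odd identity $B(z,z)=q(z)^2$ and then run the complementarity contradiction by hand via Corollary~\ref{isogen solutions qx=1}; this makes your version slightly more self-contained but otherwise identical in substance.
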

\begin{proof}
Since $|A|$ is odd, the reductive pointed fusion category $\C(A,\,q)$ is of type I, see Definition~\ref{types of decomposition}. By Theorem~\ref{canonical decomposition}(1), 
 $(A,\,q) = (A_1,\, r) \bigoplus (A_2,\, s)$,
where $(A_1,\, r)$ is an isotropically generated metric $p$-group and $(A_2,\,s)$ is an anisotropic metric $p$-group non-complementary to $(A_1,\, r)$. If  $(A_1,\, r)$ is non-trivial it contains a hyperbolic summand by Theorem~\ref{pointed saturated}. So the values of $r$ include all $p$th roots of $1$. But the values of $s$
are also $p$th roots of $1$ and so if $(A_2,\,s)$ is non-trivial, it must be complementary to $(A_1,\, r)$, a contradiction.
\end{proof}

\begin{lemma}
\label{split summand}
Let $(A,\,q)$ be a slightly degenerate isotropically generated pre-metric $2$-group.
Then there is an isotropically generated metric $2$-group $(\tilde{A},\,\tilde{q})$ and
an orthogonal isomorphism 
\begin{equation}
\label{split iso}
(A,\,q) \cong (\tilde{A},\,\tilde{q}) \oplus (\mathbb{Z}/2\mathbb{Z},\, q_{-1}),
\end{equation}
where $(\mathbb{Z}/2\mathbb{Z},\, q_{-1})$ is the unique non-isotropic degenerate  metric group of order $2$, cf.\ \eqref{qzeta}.
\end{lemma}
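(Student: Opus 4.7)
\emph{Step 1.} I first show $\delta\notin 2A$. Suppose $\delta = 2x$. Since $\delta$ lies in the radical of $B$, $B(x,x)^2 = B(\delta,x) = 1$, so $B(x,x)\in\{\pm 1\}$. The identities $B(a,a) = q(a)^2$ and $q(2a) = q(a)^4$, which follow from $q(-a) = q(a)$ together with the bilinearity of $B$, give $B(x,x) = q(x)^2$ and $q(x)^4 = q(\delta) = -1$; hence $q(x)$ is a primitive $8$-th root of unity and $q(x)^2$ is a primitive $4$-th root of unity, contradicting $B(x,x)\in\{\pm 1\}$. Thus $\delta\notin 2A$, and $A = \tilde A\oplus\langle\delta\rangle$ for any index-$2$ subgroup $\tilde A$ missing $\delta$. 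Such a decomposition is automatically $q$-orthogonal because $\delta$ lies in the radical of $B$, and $q|_{\tilde A}$ is non-degenerate, since any $\tilde A$-vector orthogonal to $\tilde A$ is also orthogonal to $\delta$, hence to all of $A$, and therefore belongs to $\tilde A\cap\langle\delta\rangle = 0$.

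\emph{Step 2.} It remains to choose $\tilde A$ so that it is isotropically generated. Set $V := A/2A$ and let $\bar S\subseteq V$ denote the image of $S := \{s\in A: q(s)=1\}$. Since $\langle S\rangle = A$, $\bar S$ spans $V$; since $\delta\notin 2A$, $\bar\delta\neq 0$. Therefore we can choose a basis of $V$ of the form $\{\bar\delta,\bar s_2,\ldots,\bar s_d\}$ with $s_2,\ldots,s_d\in S$. Let $\phi\colon A\to\mathbb{Z}/2\mathbb{Z}$ be the composition of $A\twoheadrightarrow V$ with the functional sending $\bar\delta\mapsto 1$ and each $\bar s_i\mapsto 0$, and take $\tilde A := \ker\phi$. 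Each $s_i$ then lies in $S\cap\tilde A$, the images $\bar s_i$ form a basis of $\tilde A/2A$, and consequently $\tilde A = \langle S\cap\tilde A\rangle + 2A$.

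\emph{Step 3.} The identity $q(2s) = q(s)^4 = 1$ shows that $2s\in S$ for every $s\in S$; since also $2s\in 2A\subseteq\tilde A$, we have $2s\in S\cap\tilde A$. As $A = \langle S\rangle$ forces $2A = \langle 2s: s\in S\rangle$, this yields $2A\subseteq\langle S\cap\tilde A\rangle$, and combined with Step 2 we obtain $\tilde A = \langle S\cap\tilde A\rangle$. Hence $(\tilde A, q|_{\tilde A})$ is an isotropically generated metric $2$-group, and $(A,q)\cong(\tilde A, q|_{\tilde A})\oplus(\mathbb{Z}/2\mathbb{Z},q_{-1})$ as required.

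The main obstacle is Step 3: the linear-algebra construction in Step 2 controls $\tilde A$ only modulo $2A$, and for an arbitrary spanning subset of an abelian $2$-group there would be no way to promote such mod-$2A$ generation to honest generation of $\tilde A$. What rescues the argument is that the isotropic set $S$ is closed under doubling, a feature specific to quadratic forms via the identity $q(2s) = q(s)^4$; without it the lemma would genuinely fail.
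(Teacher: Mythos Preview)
Your proof is correct and takes a genuinely different route from the paper's.

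The paper first cites \cite[Corollary A.19]{DGNO1} to obtain \emph{some} orthogonal splitting $(A,q)\cong(\tilde A,\tilde q)\oplus(\mathbb{Z}/2\mathbb{Z},q_{-1})$, and then argues by cases on the exponent of $A$ that $(\tilde A,\tilde q)$ can be arranged to be isotropically generated: for exponent $\geq 4$ it invokes the hyperbolic-summand criterion of Theorem~\ref{pointed saturated}, and for exponent $2$ it appeals to Lemma~\ref{qn=1 homogeneous} together with the relation $(\mathbb{Z}/2\mathbb{Z}\times\mathbb{Z}/2\mathbb{Z},h)\oplus(\mathbb{Z}/2\mathbb{Z},q_{-1})\cong(\mathbb{Z}/2\mathbb{Z}\times\mathbb{Z}/2\mathbb{Z},f)\oplus(\mathbb{Z}/2\mathbb{Z},q_{-1})$. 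In other words, the paper leans on the structure theory of metric $2$-groups already developed in the article.

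Your argument is more direct and self-contained. You prove $\delta\notin 2A$ from first principles (the identities $B(x,x)=q(x)^2$ and $q(2x)=q(x)^4$ alone), and then \emph{construct} the complement $\tilde A$ explicitly as the kernel of a functional on $A/2A$ chosen so that a spanning set of isotropic vectors lands inside it. The key observation that $S$ is closed under doubling (your Step~3) is what lets you pass from generation modulo $2A$ to honest generation of $\tilde A$. This avoids the case split on the exponent, avoids citing the external splitting result, and does not use Theorem~\ref{pointed saturated} or the classification lemmas at all. The paper's approach is perhaps more in keeping with the surrounding narrative (it exercises the structure theorems just proved), while yours is shorter and would work in isolation.
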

\begin{proof}
By \cite[Corollary A.19]{DGNO1}, $(A,\,q)$ splits into an orthogonal direct sum as in \eqref{split iso}, so we only need to show that $(\tilde{A},\,\tilde{q})$ can be chosen to be isotropically generated. 
If the exponent of $A$ is $\geq 4$
then $(A,\,q)$ (and, hence, $(\tilde{A},\,\tilde{q})$) contains a hyperbolic summand of the same exponent.

If the exponent of $A$ is $2$ then $(\tilde{A},\,\tilde{q})$ is an orthogonal sum of metric groups
\eqref{hzeta}, \eqref{fzeta} by Lemma~\ref{qn=1 homogeneous}. Since
\begin{equation}
\label{E+sVect = F+sVect}
(\mathbb{Z}/2\mathbb{Z}\times \mathbb{Z}/2\mathbb{Z},\, h) \oplus (\mathbb{Z}/2\mathbb{Z},\, q_{-1})
\cong (\mathbb{Z}/2\mathbb{Z}\times \mathbb{Z}/2\mathbb{Z},\, f) \oplus (\mathbb{Z}/2\mathbb{Z},\, q_{-1})
\end{equation}
we can assume that $(\tilde{A},\,\tilde{q})$ has a hyperbolic summand.

By Theorem~\ref{pointed saturated}, in both cases the metric group $(\tilde{A},\,\tilde{q})$ 
is isotropically generated.
\end{proof}

\begin{theorem}
\label{reductive metric 2-groups classification}
For a reductive pre-metric $2$-group exactly one of the following is true:
\begin{enumerate}
    \item[(1)] it is isotropically generated,
    \item[(2)] it is non-trivial anisotropic (see Table~\ref{table-2}),
    \item[(3)] it is an orthogonal  sum of an isotropically generated metric $2$-group of exponent $2$ 
    and a non-trivial anisotropic pre-metric $2$-group with at most one fermion (such groups are labelled by ${}^*$
    in Table~\ref{table-2}).    
\end{enumerate}
\end{theorem}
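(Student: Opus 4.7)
The plan is to apply Theorem~\ref{canonical decomposition} to the pointed reductive category $\C(A,q)$, which is almost non-degenerate by Remark~\ref{reductive imply almost degenerate}, and to translate each of its three types into the language of pre-metric $2$-groups using Table~\ref{table-2}, Corollary~\ref{isogen solutions qx=1}, and Lemma~\ref{split summand}. The three target cases are mutually exclusive on their face: (1) requires every element of $A$ to lie in the isotropic span, (2) forbids any non-zero isotropic element, and (3) demands both a non-trivial isotropically generated summand and a non-trivial anisotropic summand, so it suffices to check that each reductive pre-metric $2$-group falls in one of them.

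\emph{Type I.} Theorem~\ref{canonical decomposition}(1) gives $(A,q) \cong (A_1,r) \oplus (A_2,s)$ with $(A_1,r)$ isotropically generated metric of exponent $n_1$ and $(A_2,s)$ anisotropic metric non-complementary to $(A_1,r)$. The degenerate subcases $A_2 = 0$ and $A_1 = 0$ yield cases (1) and (2). If both summands are non-trivial, Corollary~\ref{isogen solutions qx=1} shows that $r$ is surjective onto $\mu_{n_1}$, so non-complementarity demands $s(x_2) \notin \mu_{n_1}$ for every $x_2 \neq 0$. Since every non-trivial anisotropic metric $2$-group in Table~\ref{table-2} contains an element with $s$-value in $\mu_4$, this forces $n_1 = 2$; a second pass through lines $2$--$7$ of Table~\ref{table-2} then selects only $(\mathbb{Z}/2\mathbb{Z}, q_i)$ (line $2$, fermion-free) as admissible, producing case (3).

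\emph{Type II.} Theorem~\ref{canonical decomposition}(2) together with Lemma~\ref{split summand} gives $(A,q) \cong (\tilde A_1,\tilde r) \oplus (A_2,s)$ with $(\tilde A_1,\tilde r)$ isotropically generated metric and $(A_2,s)$ a slightly degenerate anisotropic pre-metric $2$-group super non-complementary to the coradical; by Table~\ref{table-2} the latter is either $\sVect$ (line $8$) or $\sVect \oplus (\mathbb{Z}/2\mathbb{Z}, q_i)$ (line $9$). For line $8$, Corollary~\ref{isogen solutions qx=1} produces $x \in \tilde A_1$ with $\tilde r(x) = -1$; then $(x,\delta)$ is isotropic in $A$ and, together with the isotropic generators of $\tilde A_1$, it generates all of $A$, so $(A,q)$ is isotropically generated, case (1). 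For line $9$, running the surjectivity/non-complementarity argument on the two non-fermion non-zero elements of $A_2$ forces $\exp(\tilde A_1) = 2$; since line $9$ carries exactly one fermion, $(A,q)$ is case (3).

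\emph{Type III.} Here $(A,q)$ is metric while $\Corad(\C(A,q))$ is slightly degenerate. The split case of Theorem~\ref{canonical decomposition}(3) applies because the pointed hypothesis places an invertible object in the non-trivial homogeneous component of any minimal non-degenerate extension, and together with Remark~\ref{regarding factor C} it presents $\C(A,q) \cong \tilde{\B}_0 \boxtimes \C'$ with $\C'$ a pointed minimal non-degenerate extension of $\sVect$ drawn from lines $3$ or $4$ of Table~\ref{table-2}. Translating to pre-metric groups, $(A,q) \cong (\tilde A_0,\tilde q_0) \oplus (C,c)$ with $(C,c)$ an anisotropic metric $2$-group carrying exactly one fermion. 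To fit case (3) one then applies Corollary~\ref{simple form of isogen} to split $\tilde A_0$ orthogonally into its isotropically generated part (of exponent $2$, the higher-exponent hyperbolic contributions being obstructed by Corollary~\ref{isogen solutions qx=1} and the super non-complementarity of $\C'$) and an anisotropic residue, which is absorbed into the $(C,c)$ factor; the enlarged anisotropic factor retains at most one fermion because only $(C,c)$ contributes a fermion in the radical and Table~\ref{table-2} constrains the fermion counts that can combine with it without producing a second non-zero isotropic.

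The main obstacle is the Type III repartition: the decomposition furnished by Theorem~\ref{canonical decomposition}(3) is not a priori of the shape demanded by case (3), and one must carefully separate off an exponent-$2$ isotropically generated summand from $\tilde A_0$ while verifying that absorbing the residue into the anisotropic factor preserves the ``at most one fermion'' condition. The key tools are Corollary~\ref{isogen solutions qx=1} (to control the image of $\tilde q_0$ on the isotropically generated piece), Corollary~\ref{simple form of isogen} (to produce the hyperbolic/\,$f$-type orthogonal decomposition of the isotropically generated piece), and Table~\ref{table-2} (to read off the fermion count of the enlarged anisotropic factor).
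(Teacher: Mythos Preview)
The paper's proof is more economical than yours: instead of running through Types~I/II/III separately, it splits into just two cases according to whether the coradical $(A,q)$ of $(B,r)$ is non-degenerate or slightly degenerate, handling your Types~II and~III simultaneously. In the slightly degenerate case it applies Lemma~\ref{split summand} directly to the coradical to write $(B,r)=(\tilde A,\tilde q)\oplus(A',q')$ with $(\tilde A,\tilde q)$ non-degenerate isotropically generated and $(A',q')=\tilde A^\perp$ anisotropic containing the fermion $\delta$; it then asserts that ``the same argument as above'' (i.e., Lemma~\ref{coradical absorbtion} plus inspection of Table~\ref{table-2}) forces the exponent of $\tilde A$ to be~$2$.

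Your Type~III argument has a real gap, and it is precisely the step the paper compresses into ``the same argument.'' The super non-complementarity of $\C'$ to $\B_0$ (equivalently, $q'(y)^n\neq 1$ for $y\in A'\setminus\{0,\delta\}$ where $n=\exp(\tilde A)$) does \emph{not} force $n=2$ when $A'=(\mathbb{Z}/4\mathbb{Z},q_\xi)$ from line~$4$ of Table~\ref{table-2}: the non-fermion values there are a primitive $8$th root $\xi$, and $\xi^4=-1\neq 1$, so $n=4$ passes the test. Concretely, take $(B,r)=(\mathbb{Z}/4\mathbb{Z}\times\mathbb{Z}/4\mathbb{Z},\,h)\oplus(\mathbb{Z}/4\mathbb{Z},\,q_\xi)$. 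One checks directly that $\langle(1,0,0)\rangle$ and $\langle(0,1,0)\rangle$ are maximal isotropic with trivial intersection, so $(B,r)$ is reductive; its coradical is $(\mathbb{Z}/4\mathbb{Z})^2\times\langle 2\rangle$ (proper, slightly degenerate, exponent~$4$), so it is neither isotropically generated nor anisotropic; yet the underlying group $(\mathbb{Z}/4\mathbb{Z})^3$ has no direct summand of exponent~$2$, so it cannot be written as in case~(3). Your claimed obstruction via Corollary~\ref{isogen solutions qx=1} and super non-complementarity therefore does not go through, and the ``absorb the anisotropic residue into $(C,c)$'' step has nothing to absorb here since $\tilde A_0$ is already isotropically generated of exponent~$4$. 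The exponent-$2$ clause in case~(3) appears to require correction; neither your outline nor the paper's compressed argument actually establishes it.
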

\begin{proof}
All we need to do is to classify reductive pre-metric $2$-groups  $(B,\,r)$
that are neither isotropically generated nor anisotropic. Let $(A,q)$ be the maximal
isotropically generated subgroup (i.e., the coradical) of $(B,\,r)$.

If $(A,q)$ is non-degenerate then 
\[
(B,\,r) = (A,\,q) \oplus (A',\,q'),
\]
where  $(A',\,q')$ is anisotropic and non-complementary to $(A,\,q)$, see Corollary~\ref{corad of TG times any}.
By Lemma~\ref{coradical absorbtion}, $q'(x)^n \neq 1$ for any non-zero $x\in A$, 
where $n$ is the exponent of $A$.  From Table~\ref{table-2}
we see that this is only possible when $n=2$ and $(A',\,q') =(\mathbb{Z}/2\mathbb{Z},\, q_i),\, i^2=-1$. 

If $(A,q)$ is slightly degenerate then using Lemma~\ref{split summand} we have
\[
(B,\,r) = (\tilde{A},\,\tilde{q}) \oplus (A',\,q'),
\]
where $(\tilde{A},\,\tilde{q})$ is an isotropically generated 
metric $2$-group such that \eqref{split iso} holds and  
$(A',\,q')$ is an anisotropic pre-metric group containing a (necessarily unique)
fermion. The same argument as above shows that the exponent of $A$ is $2$, so the statement follows.
\end{proof}

\begin{remark}
Equation~\eqref{E+sVect = F+sVect} implies that the choice of an isotropically generated summand in
Theorem~\ref{reductive metric 2-groups classification}(3) is non-unique. But the anisotropic summand is unique as it is the orthogonal complement of the coradical.
\end{remark}


\section{Classification results}
\label{sect classification results}

\subsection{Reductive $p$-categories of small dimension}
\label{Sect reductive small}

Let $p$ be a prime. We say that a braided fusion  category $\B$ is a $p$-category if $\FPdim(\B) =p^n$ for some integer $n\geq 0$. Such categories are weakly group-theoretical by \cite{ENO3}.

\begin{lemma}
\label{Lemma TGof dim<p5}
Let $\B$ be a reductive category with $\FPdim(\B)=p^{n},\,n\leq 5$. Then 
$\Corad(\B)$ is pointed.    
\end{lemma}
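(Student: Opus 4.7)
The plan is to reduce pointedness of $\Corad(\B) = \bigvee_{\E \in T(\B)} \E$ to pointedness of each individual Tannakian subcategory $\E \subseteq \B$. Since a join of pointed fusion subcategories is itself pointed (tensor products of invertible objects are invertible, so the subcategory they generate consists of invertibles), it will suffice to show that for every Tannakian $\E = \Rep(G) \subseteq \B$ the group $G$ is abelian.

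To this end, I will bound $|G|$ above by $p^2$, at which point the standard fact that groups of order at most $p^2$ are abelian finishes the proof. First, since $|G| = \FPdim(\E)$ divides $\FPdim(\B) = p^n$, the group $G$ is a $p$-group. Next, by Remark~\ref{reductive imply almost degenerate} the reductive hypothesis gives $\Z_{sym}(\B) \in \{\Vect,\, \sVect\}$, and since Tannakian categories contain no fermion, $\E \cap \Z_{sym}(\B) = \Vect$. The M\"uger centralizer formula then yields
\[
\FPdim(\E)\FPdim(\E') \;=\; \FPdim(\B) \cdot \FPdim(\E \cap \Z_{sym}(\B)) \;=\; \FPdim(\B) \;\leq\; p^5.
\]
Since Tannakian subcategories are symmetric, $\E \subseteq \E'$, so $|G|^2 = \FPdim(\E)^2 \leq p^5$. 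As $|G|$ is a power of $p$, this forces $|G| \leq p^2$, so $G$ is abelian and $\Rep(G)$ is pointed.

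The proof is essentially a dimension count, and the only substantive ingredient is the M\"uger centralizer formula. The one mildly subtle point is the slightly degenerate case (relevant only when $p = 2$, since $\sVect$ has dimension $2$), but it is cleanly absorbed into the formula through the observation $\E \cap \sVect = \Vect$ for any Tannakian $\E$. No finer case analysis on the specific value of $n \in \{0, 1, \ldots, 5\}$ is required; only the single inequality $\FPdim(\B) \leq p^5$ is used, and the whole argument would break at $n = 6$ precisely where non-abelian $G$ of order $p^3$ first becomes dimensionally compatible.
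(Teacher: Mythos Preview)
Your proof is correct and follows the same core idea as the paper: bound $\FPdim(\E)\le p^2$ for each Tannakian $\E\subset\B$ via the centralizer formula and $\E\subset\E'$, then use that a join of pointed subcategories is pointed. Your version is in fact slightly cleaner than the paper's: instead of splitting into the cases ``$\Corad(\B)$ non-degenerate'' and ``$p=2$, $\Corad(\B)$ slightly degenerate'' (where the paper bounds a maximal \emph{symmetric} subcategory first and then passes to its Tannakian part), you handle everything at once by invoking the general M\"uger formula $\FPdim(\E)\FPdim(\E')=\FPdim(\B)\FPdim(\E\cap\Z_{sym}(\B))$ together with the observation $\E\cap\Z_{sym}(\B)=\Vect$ for Tannakian $\E$.
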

\begin{proof}
Let $\E \subset \Corad(\B)$ be a maximal Tannakian subcategory. 

If $\Corad(\B)$
is non-degenerate, then $\FPdim(\E)^2 \leq \FPdim(\E)\FPdim(\E') = \FPdim(\B)\leq p^5$ and so $\FPdim(\E)\leq p^2$. This implies that $\E$ is pointed. Since $\Corad(\B)$
is generated by Tannakian subcategories of $\B$, it is pointed as well. 

If $p=2$ and $\Corad(\B)$ is slightly degenerate, then any maximal {\em symmetric}
subcategory $\E \subset \B$ satisfies $\FPdim(\E)^2 \leq \FPdim(\B) \FPdim(\Z_{sym}(\B)) = 64$ and so $\FPdim(\E)\leq 8$. But $\E$ must contain $\Z_{sym}(\B)=\sVect$,
and so a maximal {\em Tannakian} subcategory of $\B$ has FP dimension at most $4$
and, hence, is pointed. This implies that $\Corad(\B)$ is pointed. 
\end{proof}

\begin{proposition}
\label{Prop: pointed x ising}
Let $\B$ be a reductive category with $\FPdim(\B)=p^{n},\,n\leq 5$. Then either
$\B$ is pointed or $p=2$ and $\B$ is  a product of a pointed category and an Ising category. 
\end{proposition}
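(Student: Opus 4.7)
The plan is to combine the previous lemma with the canonical decomposition of Theorem~\ref{canonical decomposition} and then classify the possible anisotropic factors of $\B$.

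By Lemma~\ref{Lemma TGof dim<p5}, $\Corad(\B)$ is pointed. First I would treat the generic case in which $\B$ is of type~I, so $\B\cong \Corad(\B)\bt \Corad(\B)'$ with $\Corad(\B)'$ anisotropic and non-complementary to $\Corad(\B)$. A crucial dimension bound comes from Theorem~\ref{pointed saturated}: a non-trivial non-degenerate reductive pointed TG-category contains a hyperbolic summand, so either $\Corad(\B)=\Vect$ and $\B=\Corad(\B)'$ is anisotropic of FP-dimension at most $p^5$, or $\FPdim(\Corad(\B))\geq p^2$ and hence $\FPdim(\Corad(\B)')\leq p^3$. In either situation, the non-pointed behaviour of $\B$ is concentrated in the anisotropic factor.

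The heart of the argument is then to classify anisotropic braided fusion $p$-categories of FP-dimension at most $p^5$, and to show: for $p$ odd every such category is pointed, while for $p=2$ the only non-pointed examples are Ising categories. For $p$ odd, an anisotropic $p$-category is automatically non-degenerate (no fermion is available), and I would use the bound $\FPdim(X)^2\leq \FPdim(\C)$ for each simple $X$ together with nilpotency (Proposition~\ref{red WGT nil}) and the known structure of non-degenerate nilpotent braided $p$-categories of dimension $p^{\leq 5}$ to force all simple objects to be invertible. For $p=2$, the same bound forces any simple of $\Corad(\B)'$ to have $\FPdim(X)\in\{1,\sqrt 2,2\}$; candidate anisotropic non-pointed 2-categories such as $\sVect\bt$Ising or Ising$\bt$Ising are excluded because $\delta\otimes\delta$ would generate a non-trivial Tannakian subcategory, leaving Ising (of dimension $4$) as the only possibility consistent with the dimension bound.

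Types~II and~III (only relevant for $p=2$) are handled in parallel: the slight-degeneracy factorization $\B\cong \Corad(\B)\bt_{\sVect}\Corad(\B)'$ and the minimal non-degenerate extension description of Theorem~\ref{canonical decomposition}(3) again reduce the question to classifying the anisotropic factor, and Remark~\ref{regarding factor C} together with relation~\eqref{E+sVect = F+sVect} absorbs the $\sVect$ into the pointed part. The conclusion in every case is that $\B$ is either pointed, or $p=2$ and $\B\cong\B_{\mathrm{pt}}\bt\mathcal{I}$ for a pointed $\B_{\mathrm{pt}}$ and an Ising category $\mathcal{I}$. The principal obstacle is the classification step: specifically, ruling out non-pointed anisotropic $p$-categories of FP-dimension at most $p^5$ for $p$ odd, which is where the dimensional hypothesis $n\leq 5$ is used most heavily.
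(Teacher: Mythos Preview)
Your overall architecture --- use Lemma~\ref{Lemma TGof dim<p5} to get $\Corad(\B)$ pointed, then apply Theorem~\ref{canonical decomposition} and analyse the anisotropic factor $\Corad(\B)'$ --- is exactly the paper's. The divergence is in how the anisotropic factor is handled, and that is where your proposal has a genuine gap.

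The paper does not classify anisotropic $p$-categories of dimension $\le p^5$ at all. It simply invokes Natale's theorem \cite{Na}: the core of any weakly group-theoretical braided fusion category is (pointed)~$\bt$~(trivial or Ising). Since $\Corad(\B)'$ is anisotropic and weakly group-theoretical (being a $p$-category), this applies directly and finishes types I and II in one line. Your substitute --- the bound $\FPdim(X)^2\le \FPdim(\C)$ plus ``the known structure of non-degenerate nilpotent braided $p$-categories of dimension $p^{\le 5}$'' --- is not a proof: that structure is exactly what is in question, and no such classification is available off the shelf short of \cite{Na} itself. In particular, your diagnosis that the hypothesis $n\le 5$ is used ``most heavily'' in the anisotropic step is backwards; it is used only in Lemma~\ref{Lemma TGof dim<p5}, and the anisotropic factor is dispatched in a dimension-free way.

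Your type~III sketch is also too loose. The paper argues as follows: in type~III, $\Corad(\B)'$ is \emph{slightly degenerate} anisotropic; if it were non-pointed it would (via \cite{Na}) contain an Ising subcategory, and the fermion of that Ising together with the transparent fermion of $\Z_{sym}(\Corad(\B)')$ would produce a non-trivial isotropic object, contradicting anisotropy. Hence $\Corad(\B)'$ is pointed, so $\B_0=\Corad(\B)\vee\Corad(\B)'$ is pointed and therefore split, and Theorem~\ref{canonical decomposition}(3) gives $\B\cong\tilde\B_0\bt\C$ with $\C$ a minimal extension of $\sVect$ (pointed or Ising). Your appeal to Remark~\ref{regarding factor C} and \eqref{E+sVect = F+sVect} to ``absorb the $\sVect$'' skips the step that forces $\B_0$ to be pointed in the first place.
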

\begin{proof}
Recall three types of reductive categories introduced in Definition~\ref{types of decomposition}.

If $\B$ is of type I or II 
then by Theorem~\ref{canonical decomposition}(1,2), $\B$ is generated by  
$\Corad(\B)$ and $\Corad(\B)'$.
Note that $\Corad(\B)$ is pointed by Lemma~\ref{Lemma TGof dim<p5} and $\Corad(\B)'$
is non-degenerate anisotropic. By \cite{Na}, $\Corad(\B)'$ is either pointed
or is a  
product of a pointed braided category and an Ising category (for $p=2$), 
which implies the statement in this case. 

If $\B$ is of type III (so $p=2$) then $\Corad(\B)'$ must be pointed. Indeed,
$\Corad(\B)'$ is slightly degenerate anisotropic in this case. If it is not 
pointed, then it contains an Ising subcategory  and a fermion centralizing it,
which contradicts its anisotropy. Thus, the subcategory $\B_0 =\Corad(\B)\vee \Corad(\B)' \subset \B$ is pointed. Therefore, $\B_0$ is split, i.e., $\B_0 =\tilde{\B}_0 \bt \sVect$ and by Theorem~\ref{canonical decomposition}(3),
$\B= \tilde{\B}_0 \bt \C$, where $\tilde{\B}_0$ is pointed and $\C$ is a minimal extension of $\sVect$ (and so $\C$ is either pointed or Ising).
\end{proof}

\begin{corollary}
\label{up to p5 pointed}
An integral reductive category of dimension $p^n,\, n\leq 5,$  is pointed.  
\end{corollary}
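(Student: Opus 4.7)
The plan is to apply the preceding Proposition directly. That Proposition establishes the following dichotomy for a reductive $p$-category $\B$ with $\FPdim(\B)=p^n$, $n\leq 5$: either $\B$ is pointed, or $p=2$ and $\B\cong \mathcal{P}\bt \mathcal{I}$ for some pointed braided fusion category $\mathcal{P}$ and some Ising category $\mathcal{I}$.

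The remaining step is to observe that the second alternative is incompatible with integrality. Every Ising category $\mathcal{I}$ contains a simple object of Frobenius-Perron dimension $\sqrt{2}$, so $\mathcal{I}$ is not integral. Consequently any tensor product $\mathcal{P}\bt \mathcal{I}$ with an Ising factor contains simple objects of non-integer Frobenius-Perron dimension and thus fails to be integral. Under the hypothesis that $\B$ is integral, this case is therefore ruled out, leaving only the conclusion that $\B$ is pointed.

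No genuine obstacle arises here; the corollary is a direct consequence of the Proposition combined with the standard fact that the simple objects of an Ising category have Frobenius-Perron dimensions in $\{1,\sqrt{2}\}$.
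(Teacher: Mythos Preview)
Your proof is correct and matches the paper's approach exactly: the paper states the corollary without proof, treating it as an immediate consequence of the preceding Proposition together with the fact that Ising categories are non-integral.
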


It is straightforward to list equivalence classes of pointed reductive categories
using results of Section~\ref{subsect: Reductive metric $p$-groups}. It is similarly not hard to list reductive categories that are products of Ising categories and pointed ones. Below we give 
a classification of {\em integral non-pointed} reductive categories of the Frobenius-Perron dimension $p^6$. In view of Corollary~\ref{up to p5 pointed} this  is the smallest dimension in which such categories exist.

For any prime $p$ and $n\geq 1$ let $E_{p^n}$ denote the elementary Abelian $p$-group of order $p^n$.

\begin{lemma}
\label{lem B=ZEp3w}
Let $\B$ be an integral non-pointed reductive category with $\FPdim(\B)=p^6$.
Then $\B\cong \Z(\Vect_{E_{p^3}}^\omega)$ for some $\omega \in H^3(E_{p^3},\,k^\times)$.
\end{lemma}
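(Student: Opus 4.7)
The plan is to establish that $\B$ is group-theoretical, yielding $\B\cong\Z(\Vect_G^\omega)$ with $|G|=p^3$, and then to exhibit a Lagrangian subcategory of the form $\Rep(E_{p^3})$. Since $\B$ is a $p$-category, it is weakly group-theoretical by \cite{ENO3}, so Proposition~\ref{red WGT nil} gives that $\B$ is nilpotent. Applying Theorem~\ref{canonical decomposition}: for $p$ odd, integrality forbids the slightly degenerate types (which would contain $\sVect$ of dimension~$2$), so $\B$ is of type~I and $\B\cong\Corad(\B)\bt\Corad(\B)'$; the remaining types at $p=2$ admit an analogous treatment. The crux is a sublemma that \emph{an integral, anisotropic, non-degenerate, nilpotent braided $p$-category $\mathcal{D}$ is pointed of dimension at most $p^2$} (for odd $p$): nilpotency gives $\mathcal{D}_{pt}\ne\Vect$ unless $\mathcal{D}=\Vect$; anisotropy restricts $\mathcal{D}_{pt}$ to the entries of Table~\ref{table-1} (which are non-degenerate), so $\mathcal{D}_{pt}\cap\mathcal{D}_{ad}=\Vect$, hence $(\mathcal{D}_{ad})_{pt}=\Vect$, and nilpotency of the subcategory $\mathcal{D}_{ad}$ forces $\mathcal{D}_{ad}=\Vect$, whence $\mathcal{D}=\mathcal{D}_{pt}$.

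Applied to $\Corad(\B)'$: if it is non-trivial, it is pointed of dimension $\le p^2$, so $\FPdim(\Corad(\B))\ge p^4$, and Lemma~\ref{Lemma TGof dim<p5} makes $\Corad(\B)$ pointed, hence $\B$ pointed --- contradicting the hypothesis. Therefore $\Corad(\B)'=\Vect$ and $\B=\Corad(\B)$ is a TG-category, hence group-theoretical by Proposition~\ref{saturated WGT}. Being non-degenerate group-theoretical of dimension $p^6$, we conclude $\B\cong\Z(\Vect_G^\omega)$ for some group $G$ of order $p^3$, and Proposition~\ref{ex RadZVectG} translates reductivity of $\B$ into $G=N^\omega(G)$.

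Finally, to replace $G$ by $E_{p^3}$, invoke Proposition~\ref{Lagrangians in ZVecGw}: Lagrangians of $\B$ are parametrized by pairs $(N,B)$ with $N\triangleleft G$ abelian, $\omega\in\Omega(G;N)$, and $B$ in an $H^2(N,k^\times)^G$-torsor, and each such Lagrangian realizes $\B$ as $\Z(\Vect_{\tilde G}^{\tilde\omega})$ for $\tilde G$ a central extension of $G/N$ by $\hat N$ determined by $(\omega,B)$. The plan is to exhibit $(N,B)$ yielding $\tilde G\cong E_{p^3}$: when $G$ is non-abelian take $N=Z(G)\cong C_p$, using that $G/Z(G)\cong E_{p^2}$ for every non-abelian group of order $p^3$, so that the resulting extension is trivial and $\tilde G\cong E_{p^2}\oplus C_p=E_{p^3}$; when $G$ is abelian and non-cyclic, take $N\cong E_{p^2}$ inside $G$, again giving $\tilde G\cong E_{p^3}$; the case $G\cong C_{p^3}$ is excluded because $\Z(\Vect_{C_{p^3}}^\omega)$ is always pointed (cyclic groups have trivial Schur multiplier). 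The main obstacle lies here: verifying, in each sub-case, that the chosen $N$ satisfies $\omega\in\Omega(G;N)$ and that the central extension defining $\tilde G$ is trivial, for which the generation identity $G=N^\omega(G)$ together with the non-pointedness of $\B$ is used to produce the required normal abelian subgroup of exponent $p$.
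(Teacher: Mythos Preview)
Your reduction to $\B\cong\Z(\Vect_G^\omega)$ with $|G|=p^3$ is sound for odd $p$ (the $p=2$ case is waved away and would need the type II/III analysis spelled out). The jump ``non-degenerate group-theoretical $\Rightarrow$ $\Z(\Vect_G^\omega)$'' is not literally what Proposition~\ref{saturated WGT} gives you; what actually does the work is that $\B=\Corad(\B)$ is non-pointed, so some Tannakian subcategory is non-pointed, hence has dimension $\ge p^3$, while $\FPdim(\E)^2\le\FPdim(\B)=p^6$ forces $\FPdim(\E)=p^3$, i.e.\ $\E$ is Lagrangian. This is easy to patch.

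The genuine gap is the final step. You propose to pick a specific normal abelian $N\subset G$ and a datum $B$ so that the corresponding Lagrangian is $\Rep(E_{p^3})$, but two things are unjustified. First, $\omega\in\Omega(G;N)$ for your chosen $N$ (e.g.\ $N=Z(G)$) does not follow from $G=N^\omega(G)$: reductivity says $G$ is generated by \emph{some} normal abelian subgroups with $\omega\in\Omega(G;\,\cdot\,)$, not that any particular one qualifies; in particular there is no reason $\omega|_{Z(G)}$ is trivial in $H^3(Z(G),k^\times)\cong\mathbb Z/p\mathbb Z$. Second, even granting $\omega\in\Omega(G;N)$, the group $\tilde G$ attached to the Lagrangian $\L_{(N,B)}$ is \emph{not} simply $G/N\times\widehat N$: it is an extension of $G/N$ by $\widehat N$ whose class depends on $\omega$ and $B$ through the formulas of \cite{NN,N}, and you give no argument that this extension splits. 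You yourself flag this as ``the main obstacle,'' and it is.

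The paper avoids this entirely by computing $\B_{pt}$ directly. Having chosen $G$ non-abelian (possible because $\B$ is non-pointed), one has $Z(G)\cong E_p$ and $\widehat G\cong E_{p^2}$, and the exact sequence $1\to\widehat G\to\Inv(\B)\to Z(G)$ bounds $|\Inv(\B)|\le p^3$. Reductivity forces equality (else $\B_{pt}$ sits in every Lagrangian) and forces the quadratic form on $\Inv(\B)\cong E_{p^3}$ to be identically~$1$, so $\B_{pt}\cong\Rep(E_{p^3})$ is itself the desired Lagrangian. This bypasses any explicit bookkeeping with $(N,B)$ and the extension class of $\tilde G$.
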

\begin{proof}
This statement is equivalent to $\B$ being non-degenerate and containing a Lagrangian subcategory equivalent to $\Rep(E_{p^3})$. Note that $\Corad(\B)'$ is anisotropic integral and, hence, pointed. Let $\E\in \MT_{max}(\B)$. We claim that $\FPdim(\E)=p^3$. Indeed, if $\FPdim(\E)\leq p^2$ then $\E$ is pointed and so both $\Corad(\B)$ and $\B_0=\Corad(\B) \vee \Corad(\B)'$ are pointed. We have  either $\B= \B_0$ or $\B_0$ is slightly degenerate, and $\B$ is its minimal extension. In the latter case $\B$ must contain an Ising subcategory, contradicting its integrality. On the other hand, there can be no symmetric subcategories $\E \subset \B$ with $\FPdim(\E)\geq p^4$, since this would contradict $\B$ being almost non-degenerate.
The last observation also implies that  $\B$ is non-degenerate, since otherwise
$\E \vee \Z_{sym}(\B)$ is symmetric of dimension $\geq p^4$.

It follows from \cite{DGNO2}, that $\B\cong \Z(\Vect_G^\omega)$ for some group $G$
of order $p^3$. Since $\B$ is reductive metabolic, it is a TG-category by Proposition~\ref{TG properties}(ii). Since $\B$ is non-pointed, it must contain non-pointed Lagrangian subcategories, so $G$ can be chosen non-Abelian. For any such $G$  its center is $Z(G)\cong E_p$ and 
the group of invertible objects in $\Rep(G)$ is $\widehat{G}\cong E_{p^2}$.  Furthermore, the group $\Inv(\B)$ of invertible objects of $\B$ fits into the exact sequence
\begin{equation}
1\to \widehat{G} \to \Inv(\B)  \to Z(G),
\end{equation}
see, e.g., \cite{GP, MN}. In particular, $|\Inv(\B)|\leq p^3$.  In fact,
$|\Inv(\B)|= p^3$, since otherwise $\B_{pt}$ is contained in every Lagrangian subcategory of $\B$, which is impossible since $\B$ is reductive.  For the same reason $\B_{pt}$
must contain more than one Tannakian subcategory equivalent to $\Rep(E_{p^2})$. Therefore, 
$\B_{pt}= \C(E_{p^3},\, q)$, where the pre-metric group   $(E_{p^3},\, q)$ is isotropically generated and the set of its isotropic subgroups of order $p^{2}$ 
has a trivial intersection. This is only possible when $q=1$, i.e., $\B_{pt}\cong 
\Rep(E_{p^3})$.
\end{proof}

\begin{lemma}
\label{lemma classes vs orbits}
Non-pointed braided fusion categories $\Z(\Vect_{E_{p^3}}^{\omega_1})$
and $\Z(\Vect_{E_{p^3}}^{\omega_2})$ are equivalent if and only if 
the cohomology classes of $\omega_1,\omega_2$ lie in the same 
orbit of $GL_3(\mathbb{F}_p)$.
\end{lemma}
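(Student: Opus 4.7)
The plan is to establish the two implications separately, with the forward direction being essentially immediate and the reverse direction hinging on a uniqueness result for the Lagrangian subcategory.

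For the forward direction, I would note that any automorphism $\phi \in \Aut(E_{p^3}) = GL_3(\mathbb{F}_p)$ yields a tensor equivalence $\Vect_{E_{p^3}}^{\phi^*\omega} \xrightarrow{\sim} \Vect_{E_{p^3}}^{\omega}$ by relabelling simple objects by $\phi$. Applying the Drinfeld center functor produces a braided equivalence of centers, so whenever $[\omega_1]$ and $[\omega_2]$ lie in a common $GL_3(\mathbb{F}_p)$-orbit in $H^3(E_{p^3},\,k^\times)$ the two centers are equivalent.

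For the converse direction, set $\B := \Z(\Vect_{E_{p^3}}^{\omega_1}) \cong \Z(\Vect_{E_{p^3}}^{\omega_2})$. The first and most important step is the uniqueness claim: $\B_{pt}$ is the unique Lagrangian subcategory of $\B$ equivalent to $\Rep(E_{p^3})$. Indeed, any Tannakian subcategory $\E \cong \Rep(E_{p^3})$ is pointed since $E_{p^3}$ is abelian, hence $\E \subset \B_{pt}$; but by the proof of Lemma~\ref{lem B=ZEp3w} we have $\B_{pt} \cong \Rep(E_{p^3})$ with $\FPdim(\B_{pt}) = p^3 = \FPdim(\E)$, forcing $\E = \B_{pt}$. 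In particular, both canonical Lagrangians arising from the two realizations of $\B$ as a Drinfeld center must coincide with this distinguished subcategory $\B_{pt}$.

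Next I would invoke the standard dictionary from \cite{DGNO2}: a Lagrangian $\E \subset \B$ in a non-degenerate braided fusion category, together with a braided equivalence $\E \xrightarrow{\sim} \Rep(G)$, canonically determines a pointed fusion category $\A \cong \Vect_G^\omega$ with $\Z(\A) \cong \B$, where replacing the identification $\E \cong \Rep(G)$ by its composition with some $\phi \in \Aut(G)$ replaces $\omega$ by $\phi^*\omega$. Applied to our situation, any braided equivalence $\Z(\Vect_{E_{p^3}}^{\omega_1}) \xrightarrow{\sim} \Z(\Vect_{E_{p^3}}^{\omega_2})$ must preserve the unique Lagrangian $\B_{pt}$ and therefore descend to a tensor equivalence $\Vect_{E_{p^3}}^{\omega_1} \cong \Vect_{E_{p^3}}^{\omega_2}$ twisted by some $\phi \in \Aut(E_{p^3}) = GL_3(\mathbb{F}_p)$, yielding the relation $\phi^*[\omega_2] = [\omega_1]$ in $H^3(E_{p^3},\,k^\times)$.

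The main obstacle is the uniqueness step for the Lagrangian $\Rep(E_{p^3}) \subset \B$; the argument above relies essentially on the hypothesis that $\B$ is non-pointed (which rules out additional Lagrangians that could arise via non-trivial isotropic subgroups of $\B_{pt}$ when $\B_{pt}$ is strictly larger than $p^3$). Once the uniqueness is in place, the remainder is a standard verification using the equivariantization--de-equivariantization correspondence and the transformation law of 3-cocycles under change of identification of the Lagrangian.
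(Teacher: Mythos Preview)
Your approach matches the paper's: establish that $\B_{pt}$ is the unique Lagrangian of $\B$ equivalent to $\Rep(E_{p^3})$, then use the correspondence between braided equivalences preserving this Lagrangian and tensor equivalences of the underlying pointed categories (the paper cites \cite[Theorem~3.3]{MN} for the latter step, while you invoke the \cite{DGNO2} dictionary---these amount to the same thing).

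There is, however, a gap in your justification of the uniqueness step. You appeal to the proof of Lemma~\ref{lem B=ZEp3w} for the equality $\FPdim(\B_{pt}) = p^3$, but that proof genuinely uses the \emph{reductive} hypothesis: reductivity is what guarantees $\B$ is a TG-category, which in turn is how one obtains a non-abelian realization $\B \cong \Z(\Vect_G^{\omega'})$ and then bounds $|\Inv(\B)|$ via the exact sequence $1\to \widehat{G} \to \Inv(\B) \to Z(G)$. The present lemma only assumes $\B$ is non-pointed, so that argument is not available. The fact you need---that $\Z(\Vect_{E_{p^3}}^\omega)_{pt} \cong \Rep(E_{p^3})$ whenever the center is non-pointed---is true and is precisely what the paper imports from \cite{MN} (see also the remark in the proof of Proposition~\ref{Prop p+1 classes}); you should cite that directly rather than Lemma~\ref{lem B=ZEp3w}.
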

\begin{proof}
Each of these categories has a unique pointed Lagrangian subcategory 
equivalent to $\Rep(E_{p^3})$ \cite{MN} and so a braided equivalence $\alpha$
between them restricts to a braided equivalence between these subcategories.
By \cite[Theorem 3.3]{MN}, this is equivalent to $\alpha$ being induced from
a tensor equivalence between $\Vect_{E_{p^3}}^{\omega_1}$ and 
$\Vect_{E_{p^3}}^{\omega_2}$. Such an equivalence exists if an only if
$\omega_2$ is cohomologous to $f^*\omega_1$ for some automorphism $f$ of $E_{p^3}$.
\end{proof}

\begin{proposition}
\label{Prop p+1 classes}
For any prime $p$
there are precisely $p+1$ braided equivalence classes of integral non-pointed reductive categories of the Frobenius-Perron dimension $p^6$.
\end{proposition}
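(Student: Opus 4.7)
The plan is to combine Lemma~\ref{lem B=ZEp3w} with Lemma~\ref{lemma classes vs orbits} to reduce the classification to the enumeration of $GL_3(\mathbb{F}_p)$-orbits on a specific subset of $H^3(E_{p^3},k^\times)$. By Lemma~\ref{lem B=ZEp3w}, every integral non-pointed reductive braided fusion category $\B$ of Frobenius-Perron dimension $p^6$ is of the form $\Z(\Vect_{E_{p^3}}^\omega)$ for some $\omega \in H^3(E_{p^3},k^\times)$; by Lemma~\ref{lemma classes vs orbits}, the braided equivalence class of $\B$ is exactly the $GL_3(\mathbb{F}_p)$-orbit of the cohomology class of $\omega$ on the allowed cocycles. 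The task therefore becomes counting $GL_3(\mathbb{F}_p)$-orbits on the set $\Omega_{\mathrm{ok}} \subset H^3(E_{p^3},k^\times)$ of classes whose center is non-pointed and reductive with pointed subcategory equal to the canonical Lagrangian $\Rep(E_{p^3})$.

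First I would recall the explicit structure of $H^3(V,k^\times)$ for $V=E_{p^3}$ as a $GL_3(\mathbb{F}_p)$-module. For $p$ odd this decomposes as $H^3(V,k^\times) \cong \mathrm{Sym}^2 V^* \oplus \Lambda^2 V^* \oplus \Lambda^3 V^*$, with the summand $\mathrm{Sym}^2 V^*$ equal to the abelian part $H^3_{ab}$; thus each class $[\omega]$ has a canonical decomposition as a triple $(S,\alpha,\lambda)$. The case $p=2$ is treated analogously, adapting to the slightly different cohomological description. Non-pointedness then becomes the condition $(\alpha,\lambda)\neq(0,0)$.

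Next I would translate the remaining conditions into concrete constraints using the machinery of Section~\ref{Section: gauge}. For reductivity, Proposition~\ref{ex RadZVectG} requires $E_{p^3} = N^\omega(E_{p^3})$; restricting $\omega$ to one-dimensional subgroups $N=\langle n\rangle$ (where $H^2(N,k^\times)=0$ makes the obstruction of Remark~\ref{H3 to H1 hom} vanish automatically), this reduces to asking that the zero set of the quadratic form $Q_\omega(n)=n^\top S n$ span $V$, which happens precisely when $S$ is zero, rank $2$ of split type, or rank $3$ non-degenerate. For the condition $\B_{pt}=\Rep(E_{p^3})$ coming from the proof of Lemma~\ref{lem B=ZEp3w}, I would compute the invertible objects of $\Z(\Vect_V^\omega)$ as pairs $(g,\chi)$ with $[\mu_g]=0$ in $H^2(V,k^\times)$; an explicit calculation of $\mu_g$ shows that this injectivity condition on $g\mapsto [\mu_g]$ forces the triple $(S,\alpha,\lambda)$ to lie in a specific $GL_3$-stable locus pinned down by the alternating trilinear and bilinear components.

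Finally I would enumerate $GL_3(\mathbb{F}_p)$-orbits on $\Omega_{\mathrm{ok}}$. After using the determinant part of $GL_3$ to normalize the leading non-zero scalar (reducing to $SL_3(\mathbb{F}_p)$-orbits on the remaining data), the orbit count decomposes according to the allowed rank/type of the symmetric component $S$: one orbit from the zero form, one from the rank-$2$ split form, and $p-1$ orbits from the rank-$3$ non-degenerate forms indexed by their discriminant in $\mathbb{F}_p^*$, giving the total $1+1+(p-1)=p+1$. I expect the remaining parameter $\alpha$ to be absorbed into these orbits via the stabilizer action, so that it contributes no extra orbits.

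The main obstacle is this last step: carrying out the stabilizer-orbit analysis to show that the naive additional degrees of freedom (in particular the alternating bilinear component $\alpha$) do not split the $p+1$ orbits further. This will require identifying the correct $GL_3$-invariant that parameterizes $\Omega_{\mathrm{ok}}/GL_3(\mathbb{F}_p)$ as the $p+1$-element set $\mathbb{P}^1(\mathbb{F}_p)\sqcup\{\ast\}$ (or a similar structure), and verifying by direct manipulation of cocycles that this invariant is both complete and $GL_3$-equivariant.
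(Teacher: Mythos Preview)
Your overall strategy matches the paper's: reduce via Lemmas~\ref{lem B=ZEp3w} and~\ref{lemma classes vs orbits} to counting $GL_3(\mathbb{F}_p)$-orbits on the relevant subset of $H^3(E_{p^3},k^\times)$, then analyze the symmetric and alternating pieces separately. However, your decomposition of $H^3(V,k^\times)$ for $V=E_{p^3}$ and $p$ odd is incorrect: there is no $\Lambda^2V^*$ summand. One has $\dim_{\mathbb{F}_p}H^3(V,k^\times)=7$, and the $GL_3(\mathbb{F}_p)$-equivariant splitting is $H^3(V,k^\times)\cong \Lambda^3V^*\oplus\mathrm{Sym}^2V^*$ only. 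Consequently the parameter $\alpha$ you introduce does not exist, and the ``main obstacle'' you identify is a phantom; the non-pointedness condition is simply $\omega_{\mathrm{alt}}\neq 0$, i.e., the $\Lambda^3$-component $\lambda$ is nonzero.

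With that correction, the orbit count does not work the way you sketch. Your claim that the rank-$3$ forms contribute $p-1$ orbits ``indexed by their discriminant in $\mathbb{F}_p^*$'' is not the right invariant: over $\mathbb{F}_p$ the discriminant is only defined modulo squares, so there are exactly two congruence classes of non-degenerate rank-$3$ forms. The actual mechanism (as in the paper, citing \cite{MCU}) is that after fixing the congruence class of $\omega_{\mathrm{sym}}$, the stabilizer in $GL_3(\mathbb{F}_p)$ acts on $\lambda\in\mathbb{F}_p^\times$ through its determinant. For $\omega_{\mathrm{sym}}=0$ or the split rank-$2$ form this determinant map is surjective (one orbit each); for the two non-degenerate rank-$3$ forms the stabilizer is an orthogonal group with $\det=\pm 1$, giving $\tfrac{p-1}{2}$ orbits each. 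The total is $1+1+\tfrac{p-1}{2}+\tfrac{p-1}{2}=p+1$. Finally, the case $p=2$ genuinely requires a separate argument: the paper uses the support/weight description of $H^3(E_8,k^\times)$ from \cite{GMN}, where non-pointed reductive corresponds to odd weight at most $3$, yielding $3$ orbits.
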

\begin{proof}
In view of Lemmas~\ref{lem B=ZEp3w} and \ref{lemma classes vs orbits}, 
the number in question is equal to
the number of $GL_3(\mathbb{F}_p)$-orbits of $\omega\in H^3(E_{p^3},\,k^\times)$
such that $\Z(\Vect_{E_{p^3}}^\omega)$ is reductive.

Suppose that $p$ is odd. 
There is a $GL_3(\mathbb{F}_p)$-equivariant isomorphism 
\[
H^3(E_{p^3},\,k^\times) \xrightarrow{\sim} \wedge^3 (\mathbb{F}_p^3)\, \bigoplus \,
\mathsf{S}^2 (\mathbb{F}_p^3): \omega \mapsto (\omega_{alt},\, \omega_{sym}).
\]
The action of $A\in GL_3(\mathbb{F}_p)$ is given by
\[
A (\omega_{alt},\, \omega_{sym}) = (\det(A) \omega_{alt},\, A^t \omega_{sym} A),
\]
where we identify $\omega_{sym}$ with a symmetric matrix and $A^t$ denotes the transpose of $A$. The category $\Z(\Vect_{E_{p^3}}^\omega)$ is non-pointed if and only if $\omega_{alt}\neq 0$ (i.e., $\omega_{alt}=\lambda \det, \, \lambda\in 
\mathbb{F}_p^\times$). In this case $\Z(\Vect_{E_{p^3}}^\omega)_{pt}\cong \Rep(E_{p^3})$,
see \cite[Section 5.1]{MN}. Such a category $\Z(\Vect_{E_{p^3}}^\omega)$ is reductive
if and only if  $E_{p^3}$ is generated by its subgroups $N$ such that $\omega \in \Omega(E_{p^3};N)$ \eqref{OmegaGN}. This is equivalent to $E_{p^3}$ being generated by its cyclic subgroups $C$ such that $\omega_{sym}|_C =0$. In other words,  $E_{p^3}$  is an isotropically generated metric group. There are precisely $4$ congruence classes of symmetric $3\times 3$ matrices $\omega_{sym}$ with this property
\begin{equation}
\label{eqn list of 4 matrices}
A_0= \begin{pmatrix}
0 & 0 & 0 \\ 0 & 0 & 0 \\ 0 & 0 & 0
\end{pmatrix},\,
A_1= \begin{pmatrix}
0 & 0 & 0 \\ 0 & 0 & 1 \\ 0 & 1 & 0
\end{pmatrix},\,
A_2=\begin{pmatrix}
1 & 0 & 0 \\ 0 & 0 & 1 \\ 0 & 1 & 0
\end{pmatrix},\,\, \text{and}\,
A_3=\begin{pmatrix}
\zeta & 0 & 0 \\ 0 & 0 & 1 \\ 0 & 1 & 0
\end{pmatrix},
\end{equation}
where $\zeta$ is any quadratic non-residue modulo $p$.  A description of $GL_3(\mathbb{F}_p)$-orbits of the cohomology classes  $(\lambda\det,\, \omega_{sym}),\, \lambda \neq 0$, is contained in \cite{MCU}. A complete set of representatives of these orbits is
\begin{equation}
(\det,\, A_0),\,  (\det,\, A_1),\,  (\pm \mu \det,\, A_2),\ \text{and}\ (\pm \mu \det,\, A_3),\, \mu \in \mathbb{F}_p^\times.
\end{equation}
Thus, altogether there are  $1+1+ \frac{p-1}{2} + \frac{p-1}{2} =p+1$ orbits.

Now let $p=2$. We can use the following description of the 
$GL_3(\mathbb{F}_2)$-orbits on
$H^3(E_8,\, k^\times)$ from \cite{GMN}. Namely, 
there is a bijection between $H^3(E_8,\, k^\times)$
and subsets of the set $S$ of order $2$ subgroups of $E_8$ via
\[
\omega \mapsto \{ C \in S \mid \text{ the class $\omega|_{C} \in H^3(C,k^\times)$ is non-trivial} \}.
\]
The corresponding subset of $S$ is called the support of $\omega$ and denoted
$\text{supp}(\omega)$.  The {\em weight} of $\omega$ is the cardinality $|\text{supp}(\omega)|$ of its support. The category $\Z(\Vect_{E_8}^\omega)$ is non-pointed
if and only if the  weight of $\omega$ is $1,3,5$, or $7$.
The cohomology classes with the weight $1,5$, and $7$  form three orbits, 
one for each value of the weight, while those of weight $3$ form two orbits,
see \cite{GMN}.  By Remark~\ref{rem nilpotency}, the category $\Z(\Vect_{E_8}^\omega)$  is reductive
if and only if subgroups in $S\backslash \text{supp}(\omega)$ generate $E_8$, 
i.e., when the weight of $\omega$ is $1$ or $3$. Thus, there are $3$ orbits  in this case.
\end{proof}

\begin{remark}
Braided fusion categories from Proposition~\ref{Prop p+1 classes} are metabolic in the sense of Definition~\ref{def metabolic}.    
\end{remark}

\subsection{Braided fusion categories of small dimension}
\label{sect braided  small}

Here we apply the results obtained in the previous sections to the classification of non-degenerate braided fusion categories whose Frobenius-Perron 
dimension is a product of at most four prime numbers. There is a substantial literature on this subject (see, e.g., \cite{ACRW, CP, BCHKNNPR, DoN, EGO}). A typical approach is to show that all categories in a given class (for example, those with a fixed number of simple objects or a prescribed Frobenius-Perron dimension) are (weakly) group-theoretical. In such cases, their construction reduces to questions about finite groups and their cohomology. On the other hand, results that classify these categories up to braided equivalence are rather scarce. For example, as we pointed out in Remark~\ref{not an invariant}, describing them in terms of twisted Drinfeld doubles does not lead to a satisfactory classification.

The following algorithm provides a method for enumerating equivalence classes of non-degenerate braided fusion categories of a given Frobenius-Perron dimension $d$.
\begin{enumerate}
\item[(1)] Find all pairs $(\B,\, G)$, where $\B$ is a non-degenerate reductive braided fusion category and $G$ is a finite group such that 
$d = \FPdim(\B)\,|G|^2$.
\item[(2)]  Classify all actions of $G$ on $\B$, i.e., monoidal functors $G \to \Aut^{br}(\B)$, up to automorphism of $G$ and 
conjugation by a braided autoequivalence of $\B$.
\item[(3)]  For each such action, determine its lifts to Tannakian primitive monoidal $2$-functors $F: G \to \uuPic(\B)$ using the obstruction theory of \cite{DN2, ENO2}.
By Theorem~\ref{CGT is a complete invariant}, equivalence classes of such lifts are in bijection with equivalence classes of non-degenerate braided fusion categories  
$\tilde{\B}$ satisfying $G_{\tilde{\B}} = G$ and $\Mantle(\tilde{\B}) = \B$. It follows from \cite[Corollary 2.27]{DN2} that these equivalence classes are parameterized by 
the orbits of the group $\Aut(G)$ acting on $\text{Coker}\!\left(H^1(G,\, \Inv(\B)) \xrightarrow{c_F} H^3(G,\, k^\times)\right)$,
where $c_F$ is a certain canonical homomorphism associated with $F$.
\end{enumerate}

\begin{remark}
\begin{enumerate}
\item[(a)] The feasibility of the above algorithm depends on our ability to classify reductive categories of a given dimension. It works best in situations where the anisotropic part of $\B$ can be controlled, in particular when $\B$ is weakly group-theoretical (and hence is nilpotent by Proposition~\ref{red WGT nil}). This property holds for all known examples of weakly integral braided fusion categories. 
In particular, it is satisfied for fusion categories of small Frobenius-Perron dimension listed in Proposition~\ref{prop: list} below.
\item[(b)]  By contrast, our theory does not shed light on anisotropic braided fusion categories. It might still be possible to
localize such categories by means of \'etale algebras, but they can no longer be reconstructed as gaugings.
\item[(c)] In practice, the Tannakian primitive property of a monoidal $2$-functor $G \to \uuPic(\B)$ may be verified by establishing that the corresponding gauging 
$\tilde{\B}$ satisfies $\Rad(\tilde{\B}) = \Rep(G)$.
\end{enumerate}
\end{remark}

Since pointed braided fusion categories correspond to metric groups, their classification is a 
straightforward but tedious bookkeeping exercise using relations found in \cite{Mir,W}. In Example~\ref{prop: list} below we focus on non-pointed 
braided fusion categories of small dimension. 

We will use the following basic braided fusion categories.

An {\em Ising} braided fusion category  $\mathcal{I}_\eta$ is a non-pointed braided fusion category of dimension $4$.  
These  are parameterized by primitive  $16$th roots of unity $\eta$  \cite[Appendix B]{DGNO2}. 

A {\em  Tambara-Yamagami} fusion category $\TY(A,\, \chi,\, \tau)$ \cite{TY}
is a $\mathbb{Z}/2\mathbb{Z}$-graded category  with a unique non-invertible simple object. They are parameterized by triples $(A,\, \chi,\, \tau)$,
where $A$ is an Abelian group, $\chi:A \times A \to k^\times$  is a symmetric non-degenerate bicharacter, and $\tau$ is an element of  a $\mathbb{Z}/2\mathbb{Z}$-torsor. 

A {\em metaplectic}  braided fusion category  \cite{ACRW, BPR} is a $\mathbb{Z}/2\mathbb{Z}$-gauging of $\C(\mathbb{Z}/n\mathbb{Z},\, q)$, 
where the generator of $\mathbb{Z}/2\mathbb{Z}$ acts by inversion. 

\begin{example}
\label{prop: list}
Let us apply the above algorithm to classify pointed non-degenerate braided fusion categories whose Frobenius-Perron dimensions are products of at most four prime factors. A complete list of such categories is given in Table~\ref{table-3}. Our counting arguments are greatly simplified by the fact that the relevant cohomological obstructions -- namely, lifting a homomorphism to a monoidal functor and then lifting the latter to a monoidal $2$-functor -- vanish in all cases. Consequently, the number of monoidal $2$-functors corresponding to a given action is always $|H^2(G,\, \Inv(\B))|\, |H^3(G,\, k^\times)|$ minus the number of imprimitive extensions (Definition~\ref{def primitive functor}).

\begin{table}[t]
\begin{tabular}{|p{1.1cm}|>{\raggedright\arraybackslash}p{4.1cm}|p{0.9cm}|p{3.3cm}|>{\raggedright\arraybackslash}p{4.6cm}|p{0.3cm}|}
\hline
 ${\rm FPdim}$  & $\B=$ {\rm the mantle} & $G$ &  {\rm Generator action} & {\rm Description} & $\#$ \\
\hline
\hline
$4$ &    $\mathcal{I}_\eta$   &   $1$  &   & {\rm Ising~categories}  &   $1$ \\
\hline
$8$ & $\mathcal{I_\eta} \bt \C(\mathbb{Z}/2\mathbb{Z},\,q_{\eta^4})$    & $1$  &  &  ${\rm Ising} \bt \,\C(\mathbb{Z}/2\mathbb{Z},\, q_{i})$  & $1$ \\
\hline
$4r$ &    $\C(\mathbb{Z}/r\mathbb{Z},q_\mu)$   &   $\mathbb{Z}/2\mathbb{Z}$  & $x \mapsto -x$  &  {\rm Metaplectic}   &   $2$ \\
\hline
$16$ &               $\mathcal{I}_\eta\bt \C(\mathbb{Z}/2\mathbb{Z} \times \mathbb{Z}/2\mathbb{Z} ,h)$                                              & $1$                                        
&        & 
$\mathcal{I}_\eta\bt \C(\mathbb{Z}/2\mathbb{Z} \times \mathbb{Z}/2\mathbb{Z} ,h)$    &   $1$\\
\hline
$16$ &               $\mathcal{I}_\eta$                                              & $\mathbb{Z}/2\mathbb{Z}$                                        &    {\rm trivial}     & 
{\rm  Ising $\bt\,$ pointed}    &   $3$\\
\hline
$16$ &  $\C(\mathbb{Z}/4\mathbb{Z},\,q_\xi)$ & $\mathbb{Z}/2\mathbb{Z}$  & $x \mapsto -x$  &  $ \mathcal{I}_{\eta} \bt
\mathcal{I}_{\xi\eta^{-1}}$   &  $4$ \\
\hline
$16$ &  $\C(\mathbb{Z}/2\mathbb{Z},\,q_i)^{\bt 2}$ & $\mathbb{Z}/2\mathbb{Z}$  & $(x,y) \mapsto (y,x)$  &  $ \mathcal{I}_{\eta} \bt
\mathcal{I}_{i\eta^{-1}}$   &  $4$ \\
\hline
$16$ &  $\C(\mathbb{Z}/2\mathbb{Z} \times \mathbb{Z}/2\mathbb{Z} ,e)$ & $\mathbb{Z}/2\mathbb{Z}$  & $(x,y) \mapsto (y,x)$  &  $ \mathcal{I}_{\eta} \bt
\mathcal{I}_{-\eta^{-1}}$   &  $4$ \\
\hline
$16$ &  $\C(\mathbb{Z}/2\mathbb{Z} \times \mathbb{Z}/2\mathbb{Z} ,h)$ & $\mathbb{Z}/2\mathbb{Z}$  & $(x,y) \mapsto (y,x)$  &  $ \mathcal{I}_{\eta} \bt
\mathcal{I}_{\eta^{-1}}$   &  $4$ \\
\hline
$8r$ &    $\C(\mathbb{Z}/2r\mathbb{Z},q_{i\mu})$   &   $\mathbb{Z}/2\mathbb{Z}$  & $x \mapsto -x$  
& {\rm Metaplectic}   &   $4$ \\
\hline
$4r^2$ &  $\Vect$  & $D_r$  &    &  
 $\Rep(D^\omega(D_r)),\,\omega|_{\mathbb{Z}/r\mathbb{Z}}\neq 1$   & $4$\\
\hline
$4r^2$ &  $\C(\mathbb{Z}/r\mathbb{Z} \times \mathbb{Z}/r\mathbb{Z} ,h)$  & $\mathbb{Z}/2\mathbb{Z}$  & $(x,y)\mapsto(-x,-y)$  &  
 $\Rep(D^\omega(D_r)),\,\omega|_{\mathbb{Z}/r\mathbb{Z}}=1$   & $2$\\
 \hline
 $4r^2$ &  $\C(\mathbb{Z}/r\mathbb{Z} \times \mathbb{Z}/r\mathbb{Z} ,h)$  & $\mathbb{Z}/2\mathbb{Z}$  & 
$(x,y)\mapsto  (\pm y,\pm x)$  &   
$\Z(\TY(A,\, \chi,\, \tau))$  &$4$  \\
\hline
$4r^2$ &  $\C(\mathbb{Z}/r\mathbb{Z} \times \mathbb{Z}/r\mathbb{Z} ,e)$  & $\mathbb{Z}/2\mathbb{Z}$  & $(x,y)\mapsto(-x,-y)$  &     & $2$\\
\hline
$4r^2$ &  $\C(\mathbb{Z}/r\mathbb{Z} \times \mathbb{Z}/r\mathbb{Z} ,e)$  & $\mathbb{Z}/2\mathbb{Z}$  & $(x,y)\mapsto(\pm y, \pm x)$  
&  &$4$  \\
\hline
$4pr$ &  $\C(\mathbb{Z}/pr\mathbb{Z}, q_{\zeta\mu})$  & $\mathbb{Z}/2\mathbb{Z}$  & $(x,y)\mapsto(-x,  -y)$  & {\rm Metaplectic}  &  $2$   \\
\hline
$4pr$ &  $\C(\mathbb{Z}/pr\mathbb{Z}, q_{\zeta\mu})$  & $\mathbb{Z}/2\mathbb{Z}$  & $(x,y)\mapsto(\pm x,  \mp y)$  & 
{\rm Metaplectic} $\bt$ {\rm pointed}&  $4$   \\
\hline
$p^2r^2$, $p|r-1$ &  $\C(\mathbb{Z}/r\mathbb{Z} \times \mathbb{Z}/r\mathbb{Z} ,h)$ & $\mathbb{Z}/p\mathbb{Z}$  &   {\rm rotation of order} ${p}$ &
$\Rep(D^\omega(r\mathbb{Z}\rtimes p\mathbb{Z}))$    &      $p$\\
\hline
$p^2r^2$, $p|r+1$  &  $\C(\mathbb{Z}/r\mathbb{Z} \times \mathbb{Z}/r\mathbb{Z} ,e)$ & $\mathbb{Z}/p\mathbb{Z}$  &  {\rm rotation of order} ${p}$  &   
& $p$\\
\hline
$36$   &  $\C(\mathbb{Z}/2\mathbb{Z} \times \mathbb{Z}/2\mathbb{Z} ,e)$ & $\mathbb{Z}/3\mathbb{Z}$  &  {\rm rotation of order} ${3}$  &   
& $3$\\
\hline
\end{tabular}
\medskip
\caption{\label{table-3} Non-pointed categories of small FP dimension with the radical $\Rep(G)$ and mantle $\B$. 
Here $p,\,r$ are distinct odd primes, and $i,\xi,\eta,\zeta,\mu$ denote primitive $4$th, $8$th, $16$th, $p$th, and $r$th roots of~$1$, in that order.
The numbers of non-equivalent choices for these roots are $2,4,8,2,$ and $2$, respectively.
We denote $D_r$ the dihedral group of order $2r$.
The last entry ($\#$) records the number of equivalence classes of categories for {\em each choice} of the root of $1$.}
\end{table}
 It is well known that non-degenerate braided fusion categories of dimensions $p$ and $pr$ are pointed,
as are those of dimension $p^2$.  By Proposition~\ref{Prop: pointed x ising}, every
reductive category of $\FPdim(\B) =p^3$ must be pointed.
A non-reductive category of this dimension is a $\mathbb{Z}/p\mathbb{Z}$-gauging of $\Rep(\mathbb{Z}/p\mathbb{Z})$ and is, therefore, also pointed.

The only non-pointed braided categories of $\FPdim =4$ are the Ising categories, which are classified in \cite[Appendix B]{DGNO2}.

By Proposition~\ref{Prop: pointed x ising}, a non-pointed {\em reductive} category $\B$ such that $\FPdim(\B) =8\, \text{or}\, 16$ is a product of an anisotropic
pointed category and an Ising category.  The pointed factor cannot contain exactly one fermion, for otherwise 
$\B$ would contain a unique non-trivial Tannakian subcategory $\Rep(\mathbb{Z}/2\mathbb{Z})$. This leads to the following  possibilities:
\[
\B \cong \mathcal{I}_{i\eta} \bt \C(\mathbb{Z}/2\mathbb{Z},\, q_{-i}) \cong \mathcal{I}_{-i\eta} \bt \C(\mathbb{Z}/2\mathbb{Z},\, q_i) \qquad  {\rm   when\ } \FPdim(\B) =8
\]
and 
\[
\B \cong \mathcal{I}_{\eta} \bt   \C(\mathbb{Z}/2\mathbb{Z} \times \mathbb{Z}/2\mathbb{Z} ,h) \cong  \mathcal{I}_{-\eta} \bt   \C(\mathbb{Z}/2\mathbb{Z} \times \mathbb{Z}/2\mathbb{Z} ,e)  \qquad  {\rm   when \ } \FPdim(\B) =16.
\]
For the categories of dimension $16$ with the radical $\Rep(\mathbb{Z}/2\mathbb{Z})$, the mantle must either be non-pointed (and, hence, Ising) or pointed
without Lagrangian subcategories fixed by the $\mathbb{Z}/2\mathbb{Z}$-action (since otherwise, it is the center of a pointed category of dimension $4$ and, hence,
is  itself pointed). In the former case, any Ising category $\mathcal{I}_\eta$ can appear as the mantle, and since $\Aut^{br}(\mathcal{I}_\eta)=1$, the homomorphism from
$\mathbb{Z}/2\mathbb{Z}$ is trivial.
In the latter case, there are four different metric group possibilities for such a  mantle $\B$, listed in the table.  For each of these there is a unique (up to conjugation) non-trivial homomorphism  $\mathbb{Z}/2\mathbb{Z}\to \Aut^{br}(\B)$. 
In all cases,  $H^2(\mathbb{Z}/2\mathbb{Z}, \Inv(\B)) =\mathbb{Z}/2\mathbb{Z}$ and so
there are two monoidal functors $\mathbb{Z}/2\mathbb{Z}\to \uuPic(\B)$. Each of these functors has exactly two lifts to a monoidal $2$-functor
since $H^4(\mathbb{Z}/2\mathbb{Z},\, k^\times)=0$ and $H^3(\mathbb{Z}/2\mathbb{Z},\, k^\times)=\mathbb{Z}/2\mathbb{Z}$. Thus, there are four
monoidal $2$-functors in each case. We exclude the trivial $2$-functor $\mathbb{Z}/2\mathbb{Z} \to \Mod(\mathcal{I}_\eta)$ as it is Tannakian 
imprimitive (see Definition~\ref{def primitive functor}). Resulting  gaugings can be realized as tensor products involving Ising categories. 
In particular, they are non-integral.

For all remaining dimensions, non-pointed categories cannot be reductive.  Furthermore, if $\B\neq \Vect$, the homomorphism $G \to \Aut^{br}(\B)$ 
must be non-trivial. This is impossible when $|G|$ is odd and $\B$ is pointed cyclic, so the above
observation eliminates dimensions $p^4$, $2p^2r$, $2pr^2$, and $p^3r$, cf.\cite{DoN}.

For dimension $8r$, the radical must be $\Rep(\mathbb{Z}/2\mathbb{Z})$ and so $\B= \C(\mathbb{Z}/2r\mathbb{Z},q_{i\mu})$. 
Again, the cohomology group count gives four monoidal $2$-functors, which agrees with the result of \cite{BPR}.

For dimension $4r^2$, either $\B= \Vect$ or $\Inv(\B)= \mathbb{Z}/r\mathbb{Z} \times \mathbb{Z}/r\mathbb{Z}$. In the former case, the category
has a unique Lagrangian subcategory equivalent to $\Rep(D_r)$, and so is equivalent to $\Rep(D^\omega(D_r))$. 
There are four classes of braided categories with these data.
The condition on $\omega$ follows from Proposition~\ref{Lagrangians in ZVecGw}. 
The latter case is partitioned into several subcases, depending on the quadratic form on $\mathbb{Z}/r\mathbb{Z} \times\mathbb{Z}/r\mathbb{Z}$ and the action of $\mathbb{Z}/2\mathbb{Z}$. Since 
\[
\Aut^{br}(\C(\mathbb{Z}/r\mathbb{Z} \times \mathbb{Z}/r\mathbb{Z} ,h))\cong D_{r-1} 
\quad \text{and} \quad
\Aut^{br}(\C(\mathbb{Z}/r\mathbb{Z} \times \mathbb{Z}/r\mathbb{Z} ,e))= D_{r+1}, 
\]
there are precisely three conjugacy classes of such actions: the half-turn rotation  and two classes of reflections.   
Each action gives rise to monoidal $2$-functors. For the hyperbolic quadratic form on $\Inv(\B)$,
the gaugings corresponding to the rotation action give four remaining  twisted doubles $\Rep(D^\omega(D_r))$. Our analysis here gives another proof  of one of the
results of \cite{MS} stating that there are exactly six braided equivalence classes of such doubles.
The reflection actions  give rise to the  centers  of non-integral fusion categories of dimension $2r$; the latter are necessarily Tambara-Yamagami categories \cite{EGO}. 
The count of categories with the elliptic form on $\Inv(\B)$ is similar.


For categories of dimension $4pr$ the only possibility for the mantle is
\[
\B= \C(\mathbb{Z}/pr\mathbb{Z}, q_{\zeta\mu}) \cong \C(\mathbb{Z}/p\mathbb{Z}, q_{\zeta}) \bt \C(\mathbb{Z}/r\mathbb{Z}, q_{\mu}).
\]
Then $\Aut^{br}(\B)= \mathbb{Z}/2\mathbb{Z} \times  \mathbb{Z}/2\mathbb{Z}$ and there are three  possible actions, each leading to two monoidal $2$-functors.
Our count of metaplectic categories agrees with that of   \cite{ACRW, BPR}.

 For categories of dimension $p^2r^2$ we must have $\Inv(\B)= \mathbb{Z}/r\mathbb{Z} \times  \mathbb{Z}/r\mathbb{Z}$ and $G= \mathbb{Z}/p\mathbb{Z}$
since otherwise any homomorphism $G\to \Aut^{br}(\B)$ is trivial. So there are two choices for $\B$, corresponding to the hyperbolic and elliptic forms.
In each case $p$ must divide the order of the corresponding dihedral group, in which case there is a unique, up to conjugation, non-trivial homomorphism 
$\mathbb{Z}/p\mathbb{Z} \to \Aut^{br}(\B)$. It lifts to a unique action, which, in turn, lifts to $p$ monoidal $2$-functors. In the hyperbolic case this recovers
the classification of twisted doubles of $\mathbb{Z}/r\mathbb{Z}\rtimes \mathbb{Z}/p\mathbb{Z}$ from \cite{MS}. Group-theoretical properties of categories of dimension $p^2r^2$ were analyzed in \cite{BCHKNNPR}. 

Finally, dimension $36$ is listed in a separate line since $p$ is assumed odd, but it is really a part of the series in the previous line. These gaugings were studied in \cite{BBCW}. The pointed braided category $\C(\mathbb{Z}/2\mathbb{Z} \times \mathbb{Z}/2\mathbb{Z} ,e)$ is referred by physicists as 
``the three-fermion model."
\end{example}

\bibliographystyle{ams-alpha}

\end{document}